\PassOptionsToPackage{dvipsnames}{xcolor}
\documentclass[onefignum,onetabnum,a4paper]{siamart190516}
\setlength\overfullrule{0pt} 

\usepackage[utf8]{inputenc}
\usepackage[T1]{fontenc}
\usepackage{amssymb}
\usepackage{amsfonts}
\usepackage{amsmath}
\usepackage{graphicx}
\usepackage{tikz}
\usepackage{pgfplots}
\usepackage{subfig} 
\usepackage{textcomp}
\usepackage{mathtools}
\usepackage{fullpage}

\usepackage{moreverb}
\usepackage{enumerate}
\usepackage{aligned-overset}
\usepackage[most]{tcolorbox}

\newenvironment{numberedproof}[1]{{\textit{Proof #1}:}}{\hfill\proofbox\par\bigskip}

\newcommand{\R}{\mathbb{R}}
\newcommand{\Rpos}{\mathbb{R}_{+}}
\newcommand{\N}{\mathbb{N}}
\newcommand{\Z}{\mathbb{Z}}
\newcommand{\tUp}{\widetilde{U}^{(p)}}

\newcommand{\tNp}{\widetilde{N}^{(p)}}
\newcommand{\tNpmone}{\widetilde{N}^{(p-1)}}
\newcommand{\tNppar}{\widetilde{N}^{(\ppar)}}

\newcommand{\Npperpij}{\widehat{N}_{i,j}^{(\pperp)}}
\newcommand{\Npperpi}{\widehat{N}_{i}^{(\pperp)}}
\newcommand{\tFp}{\widetilde{F}^{(p)}}
\newcommand{\tfp}{\widetilde{f}^{(p)}}
\newcommand{\Dpar}{D_{x_\parallel}}
\newcommand{\Uparp}{U_{x_\parallel}^{(p)}}
\newcommand{\Uparpmone}{U_{x_\parallel}^{(p-1)}}
\newcommand{\Fparp}{F_{x_\parallel}^{(p)}}
\newcommand{\fparp}{f_{x_\parallel}^{(p)}}

\newcommand{\ppar}{p_{\parallel}}
\newcommand{\qpar}{q_{\parallel}}
\newcommand{\pperp}{p_{\perp}}
\newcommand{\qperp}{q_{\perp}}

\newcommand{\Cloctwo}{C_{\mathrm{loc},2}}
\newcommand{\Creg}{C_{\mathrm{reg}}}
\newcommand{\CregN}{C_{\mathrm{reg,N}}}
\newcommand{\omegaeps}{\xi}
\newcommand{\omegac}{\omega_{\mathbf{v}}}
\newcommand{\omegacprime}{\omega_{\mathbf{v'}}}
\newcommand{\omegace}{\omega_{\mathbf{ve}}}
\newcommand{\omegaceprime}{\omega_{\mathbf{v'e'}}}
\newcommand{\omegae}{\omega_{\mathbf{e}}}
\newcommand{\omegaeprime}{\omega_{\mathbf{e'}}}
\newcommand{\eremk}{\hbox{}\hfill\rule{0.8ex}{0.8ex}}

\newsiamremark{remark}{Remark}

\numberwithin{equation}{section}

\newcommand{\abs}[1]{\left\vert #1 \right\vert}
\newcommand{\skp}[1]{\left< #1 \right>}
\newcommand{\norm}[1]{\left\| #1 \right\|}
\newcommand{\supp}{\operatorname*{supp}}

\newcommand{\deta}{\partial_x^\eta}
\newcommand{\etam}{|\eta|}
\newcommand{\dbeta}{\partial_x^\beta}
\newcommand{\betam}{|\beta|}
\newcommand{\CacInt}{C_{\mathrm{int}}}

\title{Weighted analytic regularity for the integral fractional Laplacian in
  polygons}
\author{%
  Markus Faustmann%
  \thanks{%
    Institut für Analysis und Scientific Computing, TU Wien, A-1040 Wien, Austria%
  }%
  \and 
  Carlo Marcati%
  \thanks{Dipartimento di Matematica ``F. Casorati'', Universit{\`a} di Pavia,
    I-27100 Pavia, Italy%
      }%
  \and
  Jens Markus Melenk\footnotemark[1]%
  \and
  Christoph Schwab%
  \thanks{Seminar for Applied Mathematics, ETH Zurich, CH-8092 Zürich,%
    Switzerland%
  \funding{The research of JMM is funded by the Austrian Science Fund (FWF) by
      the special research program {\it Taming complexity in PDE systems} (grant
      SFB F65). The research of CM was performed during a PostDoctoral fellowship
      at the Seminar for Applied Mathematics, ETH Z\"urich, in 2020-2021.}%
}
}
\begin{document}

\maketitle
\centerline{\emph{dedicated to Professor Ivo M.~Babu{\v s}ka on the occasion of his $60_{16}$th birthday}}

\begin{abstract}
  We prove weighted analytic regularity of solutions to the Dirichlet problem
  for the integral fractional Laplacian in polygons with analytic right-hand side.
  We localize the problem through the Caffarelli-Silvestre extension and study
  the tangential differentiability of the extended solutions, followed 
  by bootstrapping based on Caccioppoli inequalities 
  on dyadic decompositions of vertex, edge, and vertex-edge neighborhoods.
\end{abstract}
\begin{keyword}
  fractional Laplacian, analytic regularity, corner domains, weighted Sobolev spaces
\end{keyword}
\begin{AMS}
 26A33, 35A20, 35B45, 35J70, 35R11. 
\end{AMS}
\section{Introduction}
In this work, we study the regularity of solutions to the Dirichlet problem for
the integral fractional Laplacian
\begin{equation}
  \label{eq:intro-eq}
  (-\Delta)^s u = f \text{ on }\Omega, \qquad u = 0 \text{ on } \R^d \setminus \overline{\Omega},
\end{equation}
with $0<s<1$. 
We consider the case of a polygonal $\Omega$ and a source term $f$ that is analytic 
in $\overline{\Omega}$,
and derive weighted analytic-type estimates for the solution $u$, with
vertex and edge weights that vanish on the domain boundary $\partial\Omega$.

Unlike their integer order counterparts, solutions to fractional Laplace
equations are known to lose regularity near $\partial\Omega$, 
even when the source term and $\partial\Omega$ are smooth (see, e.g., \cite{Grubb15}). 
After the establishment of low-order Hölder regularity up to the boundary for
$C^{1,1}$ domains in \cite{ros-oton-serra14}, solutions to the Dirichlet problem
for the integral fractional Laplacian have been shown to be smooth (after
removal of the boundary singularity) in $C^\infty$ domains \cite{Grubb15}.
Subsequent results have filled in the gap between low and high regularity in
Sobolev \cite{Abels2020} and Hölder spaces \cite{Abatangelo2020}, with
appropriate assumptions on the regularity of the domain.
Besov regularity of weak solutions $u$ of \eqref{eq:intro-eq} 
has recently been established in \cite{BN21} in Lipschitz domains $\Omega$.
Finally, for polygonal $\Omega$, the precise characterization of the
singularities of the solution in vertex, edge, and
vertex-edge neighborhoods is the focus of the Mellin-based analysis of \cite{GiEPSSt,stocek}.

For smooth boundary $\partial\Omega$, 
\cite{Grubb15} characterizes the mapping properties 
of the integral fractional Laplacian, exhibiting in particular 
the anisotropic nature of solutions near the boundary. 
Interior regularity results
have been obtained in \cite{cozzi17,biccari-warma-zuazua17,faustmann-karkulik-melenk22} and, 
under analyticity assumptions
on the right-hand side, (interior) analyticity of the solution has been derived
even for certain nonlinear problems \cite{koch-rueland19,dallacqua-etal12,dallacqua-etal13} and more general 
integro-differential operators \cite{GFV2015}.
The loss of regularity 
near $\partial\Omega$ 
can be accounted for by weights in the context of isotropic Sobolev spaces \cite{acosta-borthagaray17}. 
While all the latter references focus on the Dirichlet integral fractional Laplacian, 
which is also the topic of the present work,  
corresponding regularity results for the Dirichlet spectral fractional Laplacian are also available, 
see, e.g., \cite{caffarelli-stinga16}. 

The purpose of the present work is a description of the regularity of the solution
of (\ref{eq:intro-eq}) for piecewise analytic input data that reflects 
both the interior analyticity and the anisotropic nature of the solution near the boundary. 
This is achieved in Theorem~\ref{thm:mainresult} through the use of appropriately weighted Sobolev spaces. 
Unlike local elliptic operators in polygons, 
    for which vertex-weighted spaces allow for analytic regularity shifts (e.g., \cite{BabGuoI,MR162}), corresponding results for
    fractional operators in polygons require additionally edge-weights \cite{Grubb15}.

An observation that was influential in the analysis of elliptic fractional diffusion problems
is their \emph{localization through a local, divergence form, elliptic degenerate operator
in higher dimension}. 
First pointed out in \cite{CafSil07}, it subsequently inspired many developments 
in the analysis of fractional problems. 
While not falling into the standard elliptic setting (see, e.g., the discussion in \cite{Grubb15}),
the localization via a higher-dimensional local elliptic boundary value problem does 
allow one to leverage tools from elliptic regularity theory. Indeed, the present work studies
the regularity of the higher-dimensional local degenerate elliptic problem and infers 
from that the regularity of (\ref{eq:intro-eq}) by taking appropriate traces. 

Our analysis is based on Caccioppoli estimates and bootstrapping methods for the 
higher-dimensional elliptic problem. 
Such arguments are well-known to require (under suitable assumptions on the data)
a basic regularity shift for variational solutions 
from the energy space of the problem 
(in the present case, a fractional order, nonweighted Sobolev space)
into a slightly smaller subspace (with a fixed order increase in regularity).
This is subsequently used to iterate in a bootstrapping manner local regularity estimates 
of Caccioppoli type on appropriately scaled balls in a Besicovitch covering of the domain. 
In the classical setting of non-degenerate elliptic problems, the 
initial regularity shift (into a vertex-weighted Sobolev space)
is achieved by localization and a Mellin type analysis at vertices,
as presented, e.g., in \cite{MR162} and the references there. The subsequent bootstrapping
can then lead to analytic regularity as established in a number of references for local 
non-degenerate elliptic boundary value problems  
(see, e.g., \cite{BabGuoI,BabGuo3dI,BabGuo3dII,CoDaNi} and the references there). 
The bootstrapping argument of the present work structurally follows these approaches. 

While delivering sharp ranges of indices for regularity shifts (as limited by 
poles in the Mellin resolvent), the Mellin-based approach will naturally meet with difficulties 
in settings with multiple, non-separated vertices 
(as arise, e.g., in general Lipschitz polygons). 
Here, an alternative approach to extract 
some finite amount of regularity in nonweighted Besov-Triebel-Lizorkin spaces was proposed 
in \cite{Savare}; it is based on difference-quotient techniques and compactness arguments.
In the present work, our initial regularity shift is obtained with the techniques 
of \cite{Savare}. 
In contrast to the Mellin approach, the technique of \cite{Savare} leads to regularity shifts even in Lipschitz domains 
but does not, as a rule, give better shifts for piecewise smooth geometries such as polygons. 
While this could be viewed as mathematically non-satisfactory, 
we argue in the present note that it can be quite adequate 
as a base shift estimate in establishing analytic regularity in 
vertex- and boundary-weighted Sobolev spaces, where quantitative 
control of constants under scaling takes precedence over the 
optimal range of smoothness indices.

\subsection{Impact on numerical methods}

The mathematical analysis of efficient numerical methods for the numerical approximation of
fractional diffusion has received considerable attention in recent years. 
We only mention the surveys  \cite{GunbActa,BBNOS18,BLN19,LPGSGZMCMAK18} 
and the references there for broad surveys on recent developments in the analysis and in 
the discretization of nonlocal, fractional models. At this point, most basic issues in 
the numerical analysis of discretizations of linear, elliptic fractional
diffusion problems are rather well understood, and convergence rates of 
variational discretizations based on finite element methods on 
regular simplicial meshes have been established, subject to appropriate regularity hypotheses. 
Regularity in isotropic Sobolev/Besov spaces is available, \cite{BN21}, leading to certain 
algebraically convergent methods based on shape-regular simplicial meshes. 
As discussed above, the expected solution behavior is anisotropic so that edge-refined 
meshes can lead to improved convergence rates. Indeed, a sharp analysis of vertex and edge 
singularities via Mellin techniques is the purpose of \cite{GiEPSSt,stocek} and allows for 
unravelling the optimal mesh grading for algebraically convergent methods. The analytic 
regularity result obtained in Theorem~\ref{thm:mainresult} captures both the anisotropic 
behavior of the solution and its analyticity so that \emph{exponentially convergent} 
numerical methods for integral fractional Laplace equations in polygons can be developed 
in our follow-up work \cite{FMMS-hp}; see also \cite{FMMS22} for the corresponding convergence theory in 1D.

\subsection{Structure of this text}
\label{sec:Struct}
After having introduced some basic notation in the forthcoming subsection, in Section \ref{sec:Set}
we present the variational formulation
of the nonlocal boundary value problem. 
We also introduce the scales of boundary-weighted Sobolev spaces on which our
regularity analysis is based. 
In Section~\ref{sec:WgtAnRegR2}, we state our main regularity result, 
Theorem~\ref{thm:mainresult}. 
The rest of this paper is devoted to its proof, which is structured as follows.

Section~\ref{sec:RegExt} develops, along the lines of \cite{Savare}, a global  regularity shift and provides localized interior regularity for the extension problem.
In Section~\ref{sec:LocTgReg},
we establish  
a local regularity shift for the tangential derivatives of the solution of the extension problem, 
in a vicinity of (smooth parts of) the boundary.
These estimates are combined in Section~\ref{sec:WghHpPolygon} with covering arguments
and scaling to establish the weighted analytic regularity.

Section~\ref{sec:Concl} provides a brief summary of our main results,
and outlines generalizations and applications of the present results.
\subsection{Notation}
\label{sec:Nota}
For open $\omega \subseteq \R^d$ 
and $t \in \N_0$, the spaces $H^t(\omega)$ are the classical Sobolev spaces of order $t$. For $t \in (0,1)$, 
fractional order Sobolev spaces are given in terms of the Aronstein-Slobodeckij seminorm 
$|\cdot|_{H^t(\omega)}$ and the full norm $\|\cdot\|_{H^t(\omega)}$ by 
\begin{align}
\label{eq:norm} 
|v|^2_{H^t(\omega)} = \int_{x \in \omega} \int_{z \in \omega} \frac{|v(x) - v(z)|^2}{\abs{x-z}^{d+2t}}
\,dz\,dx, 
\qquad \|v\|^2_{H^t(\omega)} = \|v\|^2_{L^2(\omega)} + |v|^2_{H^t(\omega)}, 
\end{align}
where we denote the Euclidean norm in $\R^d$ by $\abs{\;\cdot\;}$. 
For bounded Lipschitz domains $\Omega \subset \R^d$ and $t \in (0,1)$,  we additionally introduce  
\begin{align*}
\widetilde{H}^{t}(\Omega) \coloneqq \left\{u \in H^t(\R^d) \,: \, u\equiv 0 \; \text{on} \; \R^d \backslash \overline{\Omega} \right\}, 
\quad  \norm{v}_{\widetilde{H}^{t}(\Omega)}^2 \coloneqq \norm{v}_{H^t(\Omega)}^2 + \norm{v/r^t_{\partial\Omega}}_{L^2(\Omega)}^2,
\end{align*}
where $r_{\partial\Omega}(x)\coloneqq\operatorname{dist}(x,\partial\Omega)$ 
 denotes the Euclidean distance of a point
 $x \in \Omega$ from the boundary $\partial \Omega$.
On $\widetilde{H}^t(\Omega)$ we have,  
by combining \cite[Lemma~{1.3.2.6}]{Grisvard} and \cite[Proposition~2.3]{acosta-borthagaray17}, the estimate 
\begin{equation}
\label{eq:Htildet-vs-HtRd}
\forall u \in \widetilde{H}^t(\Omega) \colon \quad 
\|u\|_{\widetilde{H}^t(\Omega)} \leq C |u|_{H^t(\R^d)} 
\end{equation}
for some $C > 0$ depending
only on $t$ and $\Omega$. 
For $t \in (0,1)\backslash \{\frac 1 2\}$, the norms $\norm{\cdot}_{\widetilde{H}^{t}(\Omega)}$  and $\norm{\cdot}_{H^{t}(\Omega)}$ are equivalent on $\widetilde{H}^{t}(\Omega)$, 
see, e.g., \cite[Sec.~{1.4.4}]{Grisvard}.
Furthermore, for $t > 0$, the space $H^{-t}(\Omega)$ denotes the dual space of $\widetilde{H}^t(\Omega)$, 
and we write $\skp{\cdot,\cdot}_{L^2(\Omega)}$ 
for the duality pairing that extends the $L^2(\Omega)$-inner product.
\bigskip

We denote by $\Rpos$ the positive real numbers.
For subsets $\omega \subset \R^d$, we will use the notation $\omega^+\coloneqq \omega \times \Rpos$
and $\omega^{\theta}\coloneqq \omega \times (0,\theta)$ for $\theta > 0$.
For any multi index $\beta = (\beta_1,\dots,\beta_d)\in \mathbb{N}^d_0$, we
denote $\dbeta = \partial^{\beta_1}_{x_1}\cdots \partial^{\beta_d}_{x_d}$ and
$\betam= \sum_{i=1}^d\beta_i$.
We adhere to the convention that empty sums are null, i.e., $\sum_{j=a}^b c_j =0$ when $b<a$; this even applies to the case where the terms $c_j$ may not be defined. We also follow the standard convention $0^0 = 1$.
  \bigskip

Throughout this article, 
we use the notation $\lesssim$ to abbreviate $\leq$ up to a generic constant $C>0$
that does not depend on critical parameters in our analysis.

\section{Setting}
\label{sec:Set}
There are several different ways to define the fractional Laplacian $(-\Delta)^s$ for $s \in (0,1)$. 
A classical definition on the full space ${\mathbb R}^d$ 
is in terms of the Fourier transformation ${\mathcal F}$, i.e., 
$({\mathcal F} (-\Delta)^s u)(\xi) = |\xi|^{2s} ({\mathcal F} u)(\xi)$. 
Alternative, equivalent definitions of $(-\Delta)^s$ are, e.g., 
via spectral, semi-group, or operator theory,~\cite{Kwasnicki} or via singular integrals.

In the following, we consider the integral fractional Laplacian defined 
pointwise for sufficiently smooth functions $u$ as the principal value integral  
\begin{align}\label{eq:fracLaplaceDef}
(-\Delta)^su(x) \coloneqq C(d,s) \; \text{P.V.} \int_{\R^d}\frac{u(x)-u(z)}{\abs{x-z}^{d+2s}} \, dz \quad \text{with} \quad
C(d,s)\coloneqq - 2^{2s}\frac{\Gamma(s+d/2)}{\pi^{d/2}\Gamma(-s)},
\end{align}
where $\Gamma(\cdot)$ denotes the Gamma function. 
We investigate the fractional differential equation 
\begin{subequations}\label{eq:modelproblem}
\begin{align}
 (-\Delta)^su &= f \qquad \text{in}\, \Omega, \\
 u &= 0 \quad \quad\, \text{in}\, \Omega^c\coloneqq\R^d \backslash \overline{\Omega},
\end{align}
\end{subequations}
where $s \in (0,1)$ and $f \in H^{-s}(\Omega)$ is a given right-hand side. 
Equation (\ref{eq:modelproblem}) is understood in weak form: 
Find $u \in \widetilde{H}^s(\Omega)$ such that 
\begin{equation}
\label{eq:weakform}
a(u,v)\coloneqq \skp{(-\Delta)^s u,v}_{L^2(\R^d)} = \skp{f,v}_{L^2(\Omega)} 
\qquad \forall v \in \widetilde{H}^s(\Omega). 
\end{equation}
The bilinear form $a(\cdot,\cdot)$ has the alternative representation
\begin{equation}
a(u,v) = 
 \frac{C(d,s)}{2} \int\int_{\R^d\times\R^d} 
 \frac{(u(x)-u(z))(v(x)-v(z))}{\abs{x-z}^{d+2s}} \, dz \, dx 
\qquad \forall u,v \in \widetilde{H}^s(\Omega). 
\end{equation}

Existence and uniqueness of $u \in \widetilde{H}^s(\Omega)$ follow from
the Lax--Milgram Lemma for any $f \in H^{-s}(\Omega)$, upon the observation
that the bilinear form $a(\cdot,\cdot): \widetilde{H}^s(\Omega)\times \widetilde{H}^s(\Omega)\to \R$ 
is continuous and coercive.
\subsection{The Caffarelli-Silvestre extension}
\label{sec:CSExt}
A very influential interpretation of the fractional Laplacian is provided by the so-called 
\emph{Caffarelli-Silvestre extension}, due to \cite{CafSil07}.
It showed that the nonlocal operator $(-\Delta)^s$ 
can be be understood as a 
Dirichlet-to-Neumann map of a degenerate, \emph{local} elliptic PDE 
on a half space in $\R^{d+1}$. 
Throughout the following text, we let 
\begin{equation}
\label{eq:alpha}
\alpha \coloneqq 1-2s. 
\end{equation}
\subsubsection{Weighted spaces for the Caffarelli-Silvestre extension}
\label{sec:WghtSpcCS}
Throughout the text, we single out the last component of points in $\R^{d+1}$ by writing them as 
$(x,y)$ with $x  = (x_1,\ldots,x_d)\in \R^d$, $y \in \R$. We introduce, for open sets $D \subset \R^d \times \Rpos$, 
the weighted $L^2$-norm $\| \cdot \|_{L^2_\alpha(D)}$ via
\begin{equation}\label{eq:wgtL2}
 \norm{U}_{L^2_\alpha(D)}^2 \coloneqq \int_{(x,y) \in D} y^{\alpha} \abs{U(x,y)}^2 dx \, dy.
\end{equation}
We denote by $L^2_\alpha(D)$ the space of functions on $D$ that are
square-integrable with respect to the weight $y^\alpha$. We introduce 
$H^1_\alpha(D):= \{U \in L^2_\alpha(D)\,\colon\, \nabla U \in L^2_\alpha(D)\}$
as well as the Beppo-Levi space 
${\operatorname{BL}}^1_{\alpha}\coloneqq \{U \in L^2_{loc}(\R^{d} \times \Rpos)\,:\, 
\nabla U \in L^2_\alpha(\R^d \times \Rpos)\}$. 
For elements $U \in {\operatorname{BL}}^1_\alpha$, one can give meaning to their trace 
at $y = 0$, which is denoted $\operatorname{tr} U$. 
Recalling $\alpha = 1-2s$, one has in fact
$\operatorname{tr} U \in H^s_{loc}(\R^d)$ (see, e.g., \cite[Lem.~3.8]{KarMel19}). If 
$\operatorname{supp} \operatorname{tr} U \subset \overline{\Omega}$ for some bounded Lipschitz domain $\Omega$, then 
$\operatorname{tr} U \in \widetilde H^{s}(\Omega)$ and 
\begin{align}
\label{eq:L3.8-KarMel19}
 \|\operatorname{tr} U\|_{\widetilde{H}^s(\Omega)} \stackrel{(\ref{eq:Htildet-vs-HtRd})}{\lesssim} |\operatorname{tr} U|_{H^s(\R^d)} 
\stackrel{\text{\cite[Lem.~{3.8}]{KarMel19}}}{\lesssim} \norm{\nabla U}_{L^2_\alpha(\R^d \times \Rpos)}
\end{align}
with an implied constant depending on $s$ and $\Omega$.

\subsubsection{The Caffarelli-Silvestre extension}
\label{sec:CSExt-sub}
Given $u \in \widetilde{H}^s(\Omega)$, 
let $U = U(x,y)$ denote the minimum norm extension of $u$ to $\R^d \times \Rpos$, i.e., 
$U = \operatorname{arg min} 
\{\|\nabla U\|^2_{L^2_\alpha(\R^d \times \Rpos)}\,|\, 
   U \in {\operatorname{BL}}^1_\alpha,\, 
   \operatorname{tr} U = u \;\mbox{in}\; H^s(\R^d) \}$. 
The function $U$ is indeed unique in ${\operatorname{BL}}^1_{\alpha}$ 
(see, e.g., \cite[p.~{2900}]{KarMel19}). 

The Euler-Lagrange equations corresponding to this extension problem read
\begin{subequations}\label{eq:extension}
\begin{align}
\label{eq:extension-a}
 \operatorname*{div} (y^\alpha \nabla U) &= 0  \;\quad\quad\text{in} \; \R^d \times (0,\infty), \\ 
\label{eq:extension-b}
 U(\cdot,0) & = u  \,\quad\quad\text{in} \; \R^d.
\end{align}
\end{subequations}
Henceforth, when referring to solutions of (\ref{eq:extension}), 
we will additionally understand that $U \in {\operatorname{BL}}^1_\alpha$. 

The relevance of \eqref{eq:extension} is due to the fact that
the fractional Laplacian applied to $u\in \widetilde{H}^s(\Omega)$ 
can be recovered as distributional normal trace of 
the extension problem \cite[Section 3]{CafSil07}, \cite{caffarelli-stinga16}:
\begin{align}\label{eq:DtNoperator}
(-\Delta)^s u =  -d_s \lim_{y\rightarrow 0^+} y^\alpha \partial_y U(x,y),
\qquad 
d_s = 2^{2s-1}\Gamma(s)/\Gamma(1-s). 
\end{align}
\subsection{Main result: weighted analytic regularity for polygonal domains in $\R^2$}
\label{sec:WgtAnRegR2}
The following theorem is the main result of this article. 
It states that, provided the data $f$ is analytic in $\overline{\Omega}$,
we obtain analytic regularity for the solution $u$ of \eqref{eq:modelproblem} in 
a scale of weighted Sobolev spaces. 
In order to specify these weighted spaces, 
we need additional notation.  
\bigskip

Let $\Omega \subset \R^2$ be a bounded, polygonal Lipschitz domain 
with finitely many vertices and (straight) edges. (Connectedness of the boundary is not necessary in the following.)
We denote by $\mathcal{V}$ the set of vertices and 
by $\mathcal{E}$ the set of the (open) edges. 
For $\mathbf{v} \in \mathcal{V}$ and $\mathbf{e} \in \mathcal{E}$, we define the distance functions 
\begin{align*} 
r_{\mathbf{v}}(x)\coloneqq|x - \mathbf{v}|, 
\qquad 
r_{\mathbf{e}}(x)\coloneqq\inf_{y \in \mathbf{e}} |x - y|, 
\qquad 
\rho_{\mathbf{v} \mathbf{e}}(x)\coloneqq r_{\mathbf{e}}(x)/r_{\mathbf{v}}(x). 
\end{align*} 
For each vertex $\mathbf{v} \in \mathcal{V}$, we denote by $\mathcal{E}_{\mathbf{v}}\coloneqq \{\mathbf{e} \in \mathcal{E}\,:\, \mathbf{v} \in \overline{\mathbf{e}}\}$
the set of all edges that meet at $\mathbf{v}$. 
For any $\mathbf{e} \in \mathcal{E}$, 
we define $\mathcal{V}_{\mathbf{e}}\coloneqq \{\mathbf{v} \in \mathcal{V}\,:\, 
\mathbf{v} \in \overline{\mathbf{e}}\}$ as the set of endpoints of $\mathbf{e}$. 
For fixed, sufficiently small $\omegaeps > 0$ and for 
$\mathbf{v} \in \mathcal{V}$, $\mathbf{e} \in \mathcal{E}$, 
we define 
vertex, vertex-edge and edge neighborhoods by 
\begin{align*}
\omegac^{\omegaeps} &\coloneqq \{x \in \Omega\,:\, r_{\mathbf{v}}(x) < \omegaeps 
                                     \quad \wedge \quad \rho_{\mathbf{v}\mathbf{e}}(x) \geq \omegaeps
\quad \forall \mathbf{e} \in \mathcal{E}_{\mathbf{v}}\}, 
\\
\omegace^{\omegaeps} &\coloneqq \{x \in \Omega\,:\, r_{\mathbf{v}}(x) < \omegaeps 
                                     \quad \wedge \quad \rho_{\mathbf{v}\mathbf{e}}(x) < \omegaeps \},
\\
\omegae^{\omegaeps} &\coloneqq \{x \in \Omega\,:\, r_{\mathbf{v}}(x) \geq \omegaeps  
                                     \quad \wedge \quad  r_{\mathbf{e}}(x) < \omegaeps^2
\quad \forall \mathbf{v} \in \mathcal{V}_{\mathbf{e}}\}.
\end{align*}
Figure~\ref{fig:vertex-notation} 
illustrates this notation near a vertex $\mathbf{v} \in \mathcal{V}$ of the
polygon. Throughout the paper, we will assume that $\omegaeps$ is small enough so
that $\omegac^{\omegaeps} \cap \omegacprime^{\omegaeps} = \emptyset$ for all $\mathbf{v} \neq
\mathbf{v}'$, that $\omegae^{\omegaeps} \cap \omegaeprime^{\omegaeps} = \emptyset$  for all
$\mathbf{e}\neq \mathbf{e}'$ and $\omegace^{\omegaeps}\cap \omegaceprime^{\omegaeps} = \emptyset$ for
all $\mathbf{v}\neq \mathbf{v}'$ and all $\mathbf{e}\neq \mathbf{e}'$.
We will
drop the superscript $\omegaeps$ unless strictly necessary.

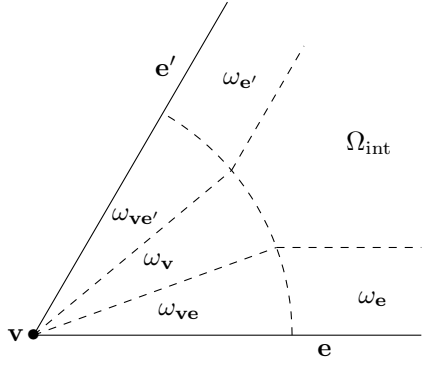
\begin{figure}[ht]
\begin{center}
\begin{tikzpicture}[scale=1.7]
  \def\R{3}
  \def\A{60}
\draw ({3/4*\R*cos(\A)-.1},{3/4*\R*sin(\A)}) node[above]{$\mathbf{e}'$};
\draw (0,0) node {\textbullet} node[left] {$\mathbf{v}$}; 
  \draw[-] (0, 0) -- ({\R*cos(\A)}, {\R*sin(\A)});
  \draw[-] (0, 0) -- (\R, 0); 
  \draw[dashed] (0, 0) -- ({2/3*\R*cos(\A/3)},  {2/3*\R*sin(\A/3)}); 
  \draw[dashed] (0, 0) -- ({2/3*\R*cos(2*\A/3)},{2/3*\R*sin(2*\A/3)}); 
  \draw[dashed] ({2/3*\R*cos(\A/3)}, {2/3*\R*sin(\A/3)}) -- (\R,{2/3*\R*sin(\A/3)}); 
  \draw[dashed] %
  ({2/3*\R*cos(2*\A/3)}, {2/3*\R*sin(2*\A/3)}) %
  -- ({2/3*\R*cos(2*\A/3) + \R*(1-2/3*cos(\A/3))*cos(\A)}, {2/3*\R*sin(2*\A/3) + \R*(1-2/3*cos(\A/3))*sin(\A)});
\draw (3/4*\R, 0) node [below]{$\mathbf{e}$} ;
\draw (3/8*\R ,0.05) node [above]{$\omega_{\mathbf{v}\mathbf{e}}$} ;
\draw (7/8*\R, 0.15) node [above]{$\omega_{\mathbf{e}}$} ;
\draw ({\R*cos(\A/2)}, {\R*sin(\A/2)}) node {$\Omega_{\mathrm{int}}$} ;
\draw ({3/8*\R*cos(\A/2)},{3/8*\R*sin(\A/2)-0.12}) node [above]{$\omega_{\mathbf{v}}$}; 
\draw ({3/8*\R*cos(3*\A/4)},{3/8*\R*sin(3*\A/4)}) node [above]{$\omega_{\mathbf{v}\mathbf{e}'}$} ;
\draw ({7/8*\R*cos(3*\A/4)-0.25},{7/8*\R*sin(3*\A/4)}) node [above]{$\omega_{\mathbf{e}'}$} ;
\draw [dashed,domain=0:\A] plot ({2/3*\R*cos(\x)}, {2/3*\R*sin(\x)});
\end{tikzpicture}
\end{center}
\caption{\label{fig:vertex-notation} Notation near a vertex $\mathbf{v}$.}
\end{figure}

We can decompose the Lipschitz polygon $\Omega$ into sectoral neighborhoods of vertices $\mathbf{v}$, 
which are unions of vertex and vertex-edge neighborhoods (as depicted in Figure~\ref{fig:vertex-notation}), 
edge neighborhoods (that are away from a vertex), and an interior part  $\Omega_{\rm int}$, i.e.,
\begin{align*}
 \Omega = \bigcup_{\mathbf{v} \in \mathcal{V}}\left( \omega_{\mathbf{v}} 
        \cup \bigcup_{\mathbf{e} \in \mathcal{E}_{\mathbf{v}}} \omega_{\mathbf{ve}} \right) 
        \cup \bigcup_{\mathbf{e} \in \mathcal{E}} \omega_{\mathbf{e}} 
        \cup \Omega_{\rm int}.
\end{align*}
Each sectoral and edge neighborhood may have a different value $\omegaeps$. 
However, since only finitely many different neighborhoods are needed to decompose the polygon, 
the interior part $\Omega_{\rm int}\subset\Omega $ has a positive distance from the boundary.  
\bigskip

In a given edge neighborhood $\omega_{\mathbf{e}}$ or an vertex-edge neighborhood 
$\omega_{\mathbf{v}\mathbf{e}}$, we let $\mathbf{e}_{\parallel}$ and $\mathbf{e}_{\perp}$ be two unit vectors 
such that $\mathbf{e}_{\parallel}$ is tangential to $\mathbf{e}$ and $\mathbf{e}_{\perp}$ is normal to $\mathbf{e}$. 
We introduce the differential operators 
\begin{align*}
D_{x_\parallel} v &\coloneqq \mathbf{e}_{\parallel} \cdot \nabla_x v, 
& D_{x_\perp} v &\coloneqq \mathbf{e}_{\perp} \cdot \nabla_x v
\end{align*}
corresponding to differentiation in the tangential and normal direction. 
Inductively, we can define higher order tangential and normal derivatives by 
$D_{x_\parallel}^j v \coloneqq D_{x_\parallel}(D_{x_\parallel}^{j-1} v)$ and
$D_{x_\perp}^j v \coloneqq D_{x_\perp}(D_{x_\perp}^{j-1} v)$ for $j>1$. \bigskip

Our main result provides local analytic regularity in edge- and vertex-weighted Sobolev spaces.
\begin{theorem}\label{thm:mainresult}
Let $\Omega \subset \R^2$ be a bounded polygonal Lipschitz domain.
  Let the data $f \in C^{\infty}(\overline{\Omega})$ satisfy
with a constant $\gamma_f>0$
  \begin{equation}
    \label{eq:analyticdata}
\forall j \in \N_0\colon \quad 
    \sum_{\betam = j} \|\dbeta f\|_{L^2(\Omega)} \leq \gamma_f^{j+1}j^j. 
  \end{equation}
Let $\mathbf{v} \in \mathcal{V}$, $\mathbf{e} \in \mathcal{E}$ and
$\omega_{\mathbf{v}}$, $\omega_{\mathbf{ve}}$, $\omega_{\mathbf{e}}$ be 
fixed vertex, vertex-edge and edge-neighborhoods.
Let $u$ be the weak solution of \eqref{eq:modelproblem}.
Then, there is $\gamma > 0$ depending only on $\gamma_f$, $s$, and $\Omega$ such that for 
every $\varepsilon>0$ there exists $C_\varepsilon>0$ (depending only on $\varepsilon$ and $\Omega$) 
such that the following holds: 
\begin{enumerate}[(i)]
\item 
For all $\beta \in \mathbb{N}^2_0$ there holds with $\abs{\beta}=p$
  \begin{equation}
 \label{eq:analytic-u-c-all}
 \norm{r_{\mathbf{v}}^{p-1/2-s+\varepsilon} \dbeta u }_{L^2(\omega_{\mathbf{v}})} 
 \leq C_{\varepsilon} \gamma^{p+1}p^p.
  \end{equation}
\item 
   For all $(\pperp, \ppar)\in \mathbb{N}^2_0$ there holds  with $p=\pperp+\ppar$
\begin{align}  
 \norm{r_{\mathbf{e}}^{\pperp-1/2-s+\varepsilon} D^{\pperp}_{x_\perp} D^{\ppar}_{x_{\parallel}} u }_{L^2(\omegae)} 
 & \leq C_{\varepsilon} \gamma^{p+1} p^p,
\label{eq:analytic-u-e-all}\\
  \norm{r_{\mathbf{e}} ^{\pperp-1/2-s + \varepsilon} r_{\mathbf{v}} ^{\ppar+\varepsilon} D^{\pperp}_{x_\perp} D^{\ppar}_{x_\parallel} u}_{L^2(\omegace)}
 & \leq C_{\varepsilon} \gamma^{p+1} p^p.
\label{eq:analytic-u-ce-all}
\end{align}
\item 
For the interior part $\Omega_{\rm int}$ and 
for all $\beta\in \N_0^2$ there holds with $\betam = p$
\begin{equation}
  \label{eq:analytic-u-int}
 \norm{\dbeta u }_{L^2(\Omega_{\rm int})} \leq \gamma^{p+1}p^p.
\end{equation}
\end{enumerate}
\end{theorem}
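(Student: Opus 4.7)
The plan is to transfer the problem to the Caffarelli-Silvestre extension $U$ of $u$, prove weighted analytic regularity for $U$ on a slab neighborhood of $\Omega$ in $\R^2\times\Rpos$, and then recover the stated estimates for $u$ by taking the trace at $y=0$, using that tangential derivatives commute with the trace operator and that the relationship \eqref{eq:L3.8-KarMel19} controls traces by weighted gradient norms. Working with $U$ replaces the nonlocal operator $(-\Delta)^s$ by the local, degenerate elliptic operator $\operatorname{div}(y^\alpha\nabla\,\cdot\,)$, which admits Caccioppoli-type inequalities — at the cost of one extra dimension and a degenerate/singular weight in $y$.

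The first ingredient I would assemble is an initial, \emph{quantitatively weak} regularity shift for $U$, following Savaré's difference-quotient/compactness technique (Section~\ref{sec:RegExt}). This buys a small amount of extra tangential smoothness over the energy space, uniformly in Lipschitz polygons, without using Mellin tools; the point is that the loss of regularity at corners forces us to use a method that works in the Lipschitz category rather than one tuned to the precise Mellin spectrum. Combined with standard interior analyticity for $\operatorname{div}(y^\alpha\nabla U)=0$ away from $\partial\Omega\times\{0\}$ and from $\{y=0\}$, this already yields the interior estimate~\eqref{eq:analytic-u-int}.

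Next I would establish the key local building block: on a cylinder over a flat piece of the boundary (an edge neighborhood lifted to $\R^2\times\Rpos$), tangential derivatives $D_{x_\parallel}^{\ppar}U$ solve the same degenerate equation as $U$ and satisfy the same homogeneous Dirichlet condition outside $\Omega\times\{0\}$. By iterating a Caccioppoli inequality in the tangential direction on a family of nested balls with geometrically shrinking collars (Section~\ref{sec:LocTgReg}), I obtain a pure tangential shift with factorial constants $\gamma^{\ppar+1}\ppar^{\ppar}$ for $D_{x_\parallel}^{\ppar}U$ in $L^2_\alpha$. Mixed estimates $D_{x_\perp}^{\pperp}D_{x_\parallel}^{\ppar}U$ are then recovered by trading normal for tangential derivatives and $\partial_y$-derivatives using the PDE $\operatorname{div}(y^\alpha\nabla U)=0$, which expresses $\partial_{x_\perp}^2 U$ in terms of lower-order normal, tangential, and $y^\alpha$-weighted $\partial_y$ terms. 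Tracing at $y=0$ and invoking \eqref{eq:L3.8-KarMel19} converts $L^2_\alpha$-norms of $\nabla$ of a tangential derivative of $U$ into $\widetilde H^s$-norms of the corresponding tangential derivative of $u$.

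The final step (Section~\ref{sec:WghHpPolygon}) is the assembly via scaled coverings. For $\omega_{\mathbf{v}}$ I use a dyadic annular decomposition $\{x:2^{-k-1}\omegaeps\le r_{\mathbf{v}}(x)\le 2^{-k}\omegaeps\}$; on each annulus the boundary is smooth at the local scale $2^{-k}\omegaeps$, so pulling back to a reference configuration via the scaling $x\mapsto 2^k x/\omegaeps$ reduces the estimate to the local tangential bootstrap, and the Jacobian together with the factor $r_{\mathbf{v}}^{p-1/2-s+\varepsilon}\sim (2^{-k}\omegaeps)^{p-1/2-s+\varepsilon}$ makes the geometric sum over $k$ converge for every $\varepsilon>0$, giving \eqref{eq:analytic-u-c-all}. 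For $\omega_{\mathbf{e}}$ I dyadically decompose only in the normal direction, which produces \eqref{eq:analytic-u-e-all} with only $\pperp$ in the weight exponent. For $\omega_{\mathbf{v}\mathbf{e}}$ I use the anisotropic two-parameter dyadic decomposition induced by $r_{\mathbf{v}}$ and $\rho_{\mathbf{v}\mathbf{e}}$; each box has size comparable to $r_{\mathbf{e}}$ in the normal direction and $r_{\mathbf{v}}$ in the tangential one, which is exactly the anisotropy reflected in the weights of \eqref{eq:analytic-u-ce-all}.

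The main obstacle I anticipate is propagating the factorial bounds $\gamma^{p+1}p^p$ through the covering and scaling arguments. Each dyadic patch contributes a constant depending on the derivative order $p$, and only if the per-scale bootstrap constant is tight enough (linear in $p$ per derivative) will the geometric sums over dyadic levels and the factor $\varepsilon$ in the weight exponent close. In practice this means the Caccioppoli/induction argument of Section~\ref{sec:LocTgReg} must be written with explicit control on how the cutoff shrinks with each new tangential derivative, and the combinatorics of distributing $\ppar$ tangential differentiations among $p+1$ nested shells must be bounded by $p^p$ rather than $(p!)^2$. The small loss $\varepsilon$ in every weight exponent is precisely what absorbs the remaining logarithmic imperfections coming from the weak initial shift of Savaré type — this is the reason the statement does not claim the sharp Mellin exponent but instead an arbitrarily small deficit.
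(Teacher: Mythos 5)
Your overall plan—extend via Caffarelli--Silvestre, obtain a weak initial shift by Savar\'e's technique, iterate Caccioppoli inequalities in the tangential direction, and assemble over a Besicovitch-type covering adapted to $r_{\mathbf v}$ and $r_{\mathbf e}$—coincides with the paper's in structure. The covering-and-scaling philosophy you describe (dyadic shells over $\omega_{\mathbf v}$, normal-only refinement over $\omega_{\mathbf e}$, a two-parameter anisotropic cover over $\omega_{\mathbf{ve}}$) is exactly what Lemmas~\ref{lemma:covering-omega_c}--\ref{lemma:subcovering-omega_ce} implement. However, your mechanism for producing the mixed derivatives $D_{x_\perp}^{\pperp}D_{x_\parallel}^{\ppar}$ has a genuine gap.

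You propose to ``trade normal for tangential derivatives and $\partial_y$-derivatives'' through the PDE, i.e., to write $\partial_{x_\perp}^2 U = -\partial_{x_\parallel}^2 U - \alpha y^{-1}\partial_y U - \partial_y^2 U$. This is the usual elliptic normal-derivative bootstrap, but here it fails on two counts. First, the coefficient $y^{-1}$ is singular precisely at $y=0$, which is where the trace must be taken, so the rewritten terms are not controlled in $L^2_\alpha$ near the boundary, and tracing them is not meaningful. Second, the Caccioppoli/shift machinery of Sections~\ref{sec:RegExt}--\ref{sec:LocTgReg} deliberately controls only $\partial_{x_i}\nabla U$ (one inner $\nabla$, which may contain a single $\partial_y$, after arbitrary $x$-derivatives): it never controls $\partial_y^2 U$ or higher pure $y$-derivatives, because one cannot translate in $y$ without changing the weight $y^\alpha$. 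The paper avoids all this by observing that, at distance $\sim r_{\mathbf e}$ from the edge, balls $B_{ij}\subset\Omega$ (Lemma~\ref{lemma:subcovering-omega_ce}) are fully interior, so that \emph{all} $x$-derivatives—including $D_{x_\perp}$—can be obtained from the interior Caccioppoli estimate (Corollary~\ref{cor:localYCaccioppoli}) applied to $D^{\ppar}_{x_\parallel}U$, at the cost of a factor $R_{ij}^{-\pperp}\sim r_{\mathbf e}^{-\pperp}$ which is exactly the weight exponent in \eqref{eq:analytic-u-ce-all}. No $\partial_y$-derivatives are taken. This two-level (tangential shift on half-balls $\sim r_{\mathbf v}$, then interior Caccioppoli on sub-balls $\sim r_{\mathbf e}$) construction is the key idea you would need to replace your PDE-trading step.

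Two further smaller points. For the interior estimate \eqref{eq:analytic-u-int} you cannot invoke analyticity of $U$ ``away from $\{y=0\}$'': $u = \operatorname{tr}U$ lives exactly at $y=0$, so the regularity must be pushed down to the trace. The paper does this with the interior Caccioppoli inequality on $B_R^+$ and the multiplicative trace estimate \eqref{eq:L3.7-KarMel19}. Finally, your account stops at estimates for $\nabla U$ and its trace, i.e.\ essentially for $\nabla u$. The cases $\pperp=0$ in \eqref{eq:analytic-u-e-all}, \eqref{eq:analytic-u-ce-all} and $p=0$ in \eqref{eq:analytic-u-c-all} require one more step: a one-dimensional Hardy inequality in the direction of $r_{\mathbf v}$ or $r_{\mathbf e}$ (using $u\equiv 0$ on $\partial\Omega$) to pass from the weighted bound on the derivative to the weighted bound on $u$ itself; this is the content of the final proof of Theorem~\ref{thm:mainresult} via \cite[Lem.~{7.1.3}]{Kozlov1997}.
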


\begin{remark}
  Inequalities \eqref{eq:analytic-u-e-all} and \eqref{eq:analytic-u-ce-all} can
  be written in compact form: For all $\nu >-1/2-s$, there exists $C_\nu>0$ such that
  for $\bullet\in\{\mathbf{e}, \mathbf{ve}\}$ 
  \begin{equation}\label{eq:analytic-u-compact}
    \| r_{\mathbf{v}}^{p+\nu} \rho_{\mathbf{ve}}^{\pperp+\nu} D_{x_\parallel}^{\ppar} D_{x_\perp}^{\pperp} u\|_{L^2(\omega_\bullet)}
     \leq C_{\nu}\gamma^{p+1}p^p  \qquad \forall(\pperp, \ppar)\in \mathbb{N}^2_0\text{ with }p=\ppar+\pperp.
\end{equation}
\eremk
\end{remark}

\begin{remark}
\begin{enumerate}[(i)]
\item Stirling's formula implies $p^p \leq C p! e^p$. 
      Therefore, there exists a constant  $\widetilde{C}_\nu$ such that \eqref{eq:analytic-u-compact} can also be written as
\begin{equation}  \label{eq:analytic-u-ce1-factorial}
\| r_{\mathbf{v}}^{p+\nu} \rho_{\mathbf{ve}}^{\pperp+\nu} D_{x_\parallel}^{\ppar} D_{x_\perp}^{\pperp} u\|_{L^2(\omega_\bullet)}
\leq \widetilde{C}_{\nu} ({\gamma e)}^{p+1} p!,
\end{equation}
and the same can also be done for \eqref{eq:analytic-u-c-all} and \eqref{eq:analytic-u-int} in
Theorem~\ref{thm:mainresult}.

\item 
We note that $(\ppar+\pperp)^{\ppar+\pperp} \leq \ppar^{\ppar} \pperp^{\pperp} e^{\ppar+\pperp}$.
Together with $p^p \leq C p! e^p$ (using Stirling's formula), one can also formulate the estimates \eqref{eq:analytic-u-compact} as follows:  There are constants $\widetilde{C}_{\nu}$ and $\widetilde\gamma > 0$ such that 
\begin{align}  \label{eq:analytic-u-ce-factorial}
\forall (\ppar, \pperp )\in \N_0^2 \colon \quad 
    \| r_{\mathbf{v}}^{p+\nu} \rho_{\mathbf{ve}}^{\pperp+\nu} D_{x_\parallel}^{\ppar} D_{x_\perp}^{\pperp} u\|_{L^2(\omega_\bullet)} \leq \widetilde{C}_{\nu} \widetilde{\gamma}^{\pperp + \ppar} \pperp!  \;\ppar!.
\end{align}

\item 
The assumption  (\ref{eq:analyticdata}) 
on the data $f$ expresses analyticity in $\overline{\Omega}$ (combine Morrey's embedding \cite[eq.~(1,4,4,6)]{Grisvard} to see $f \in C^\infty$ with 
\cite[Lemma~{5.7.2}]{morrey66}).
Inspection of the proof
(in particular Lemmas~\ref{lemma:regularity-omega_c} and \ref{lemma:regularity-omega_e}) shows that $f$ could be admitted to be 
in vertex or edge-weighted classes of analytic functions.  For simplicity of exposition, we do
not explore this further. 
\item 
Inspection of the proofs also shows that, in order to obtain weighted
  regularity of fixed, finite order $p$, only finite regularity of the data $f$ is required.
\item 
By Morrey's embedding, e.g., \cite[eq.~(1,4,4,6)]{Grisvard}, 
estimate \eqref{eq:analytic-u-int} implies that the solution $u \in C^{\infty}(\overline{\Omega_{\rm int}})$ as well as analyticity of $u$ in 
$\overline{\Omega}_{\rm int}$, \cite[Lemma~{5.7.2}]{morrey66}.
Other results on interior analytic regularity of more general, linear 
integro-differential operators are, e.g., in \cite{GFV2015}, for $1/2<s<1$.
\eremk
\end{enumerate}
\end{remark}

\section{Regularity results for the extension problem}
\label{sec:RegExt}
In this section, we derive local (higher order) regularity results for solutions to the Caffarelli-Silvestre extension problem. 
As the techniques employed are valid in any space dimension, we formulate our results for general $d\in {\mathbb N}$. 

Fix $H > 0$. 
Given $F \in L^2_{-\alpha}(\R^d \times (0,H))$ and $f \in H^{-s}(\Omega)$,
consider the problem to
find the minimizer $U = U(x,y)$ with $x \in \R^d$ and $y \in \Rpos$ of 
\begin{align} \label{eq:minimization}
\mbox{ minimize ${\mathcal F}$ on 
${\operatorname{BL}}^1_{\alpha,0,\Omega}
\coloneqq  
\{U \in {\operatorname{BL}}^1_{\alpha}\,:\, \operatorname{tr} U =0 \; \mbox{ on $\Omega^c$}\},$ }
\end{align}
where 
\begin{align}
\label{eq:calFdef}
\mathcal{F}(U)& \coloneqq  \frac{1}{2} b(U,U) - \int_{\R^d \times (0,H)} F U \, dx\,dy - \int_{\Omega} f \operatorname{tr} U \,dx, & 
b(U,V) & := \int_{\R^d \times \Rpos} y^\alpha \nabla U \cdot \nabla V\,dx\,dy. 
\end{align}

We have the following Poincar\'e type estimate: 
\begin{lemma}
\label{lemma:properties-of-H1alpha}
\begin{enumerate}[(i)]
\item
\label{item:lemma:properties-of-H1alpha-i}
The map ${\operatorname{BL}}^1_{\alpha,0,\Omega} \ni U \mapsto \|\nabla U\|_{L^2_\alpha(\R^d \times \R_+)}$ is a norm, 
and 
${\operatorname{BL}}^1_{\alpha,0,\Omega}$ endowed with this norm is a Hilbert space
with corresponding inner-product given by the bilinear form $b(\cdot,\cdot)$ in \eqref{eq:calFdef}.
\item
\label{item:lemma:properties-of-H1alpha-ii}
For every $H\in (0,\infty)$, there is $C_{H,\alpha} > 0$ such that 
\begin{equation}
\label{eq:lemma:properties-of-H1alpha}
\forall U \in {\operatorname{BL}}^1_{\alpha,0,\Omega}\colon \quad 
\|U\|_{L^2_{\alpha}(\R^d \times (0,H))} \leq C_{H,\alpha} \|\nabla U\|_{L^2_\alpha(\R^d\times \R_+)}.  
\end{equation}
\end{enumerate}
\end{lemma}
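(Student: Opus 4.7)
The plan is to establish part (ii) first, since the Poincar\'e-type bound it provides will immediately yield both the definiteness of the candidate norm and, combined with a Cauchy sequence argument, the completeness needed for part (i).

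For part (ii), I would start from the fundamental theorem of calculus in the vertical direction, which (after a density argument, admissible because $|y|^\alpha$ is an $A_2$-weight for $\alpha \in (-1,1)$ and smooth compactly supported functions are dense) gives, for a.e.\ $x \in \R^d$,
\begin{equation*}
  U(x,y) = (\operatorname{tr} U)(x) + \int_0^y \partial_y U(x,t)\,dt.
\end{equation*}
The Cauchy--Schwarz splitting $1 = t^{-\alpha/2}\cdot t^{\alpha/2}$ is legitimate since $\alpha = 1-2s \in (-1,1)$, so $\int_0^y t^{-\alpha}\,dt = y^{1-\alpha}/(1-\alpha)$ is finite and
\begin{equation*}
  \Bigl|\int_0^y \partial_y U(x,t)\,dt\Bigr|^2 \leq \frac{y^{1-\alpha}}{1-\alpha}\int_0^y t^\alpha|\partial_y U(x,t)|^2\,dt.
\end{equation*}
Combining with $(a+b)^2 \leq 2a^2+2b^2$, multiplying by $y^\alpha$, integrating over $(0,H)$ in $y$ (and using Fubini to swap the order of integration in the derivative term), and finally integrating over $x\in\R^d$, I arrive at
\begin{equation*}
  \|U\|^2_{L^2_\alpha(\R^d\times(0,H))} \leq \frac{2H^{1+\alpha}}{1+\alpha}\,\|\operatorname{tr} U\|^2_{L^2(\R^d)} + \frac{H^2}{1-\alpha}\,\|\partial_y U\|^2_{L^2_\alpha(\R^d\times\Rpos)}.
\end{equation*}
Since $U \in {\operatorname{BL}}^1_{\alpha,0,\Omega}$, the trace is supported in $\overline{\Omega}$, so $\|\operatorname{tr} U\|_{L^2(\R^d)} = \|\operatorname{tr} U\|_{L^2(\Omega)} \leq \|\operatorname{tr} U\|_{\widetilde{H}^s(\Omega)}$, and \eqref{eq:L3.8-KarMel19} then controls this by $\|\nabla U\|_{L^2_\alpha(\R^d\times\Rpos)}$, closing the estimate.

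For part (i), bilinearity, symmetry, and non-negativity of $b(\cdot,\cdot)$ are immediate, so only definiteness and completeness remain to verify. Definiteness is a direct corollary of (ii): if $\|\nabla U\|_{L^2_\alpha(\R^d \times \Rpos)} = 0$, then $\|U\|_{L^2_\alpha(\R^d \times (0,H))} = 0$ for every $H$, forcing $U = 0$. For completeness, given a Cauchy sequence $(U_n)$, the gradients converge in $L^2_\alpha(\R^d\times\Rpos)^{d+1}$ to some $G$ while, by (ii), $(U_n)$ is Cauchy in $L^2_\alpha(\R^d\times(0,H))$ for every $H>0$; the slab-wise limits patch consistently into a single $U \in L^2_{\mathrm{loc}}(\R^d\times\Rpos)$ with $\nabla U = G$ distributionally, hence $U \in L^2_\alpha$ on every bounded slab and $\nabla U \in L^2_\alpha(\R^d\times\Rpos)$. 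Finally, continuity of the trace in the sense of \eqref{eq:L3.8-KarMel19} ensures $\operatorname{tr} U = 0$ on $\Omega^c$, so $U \in {\operatorname{BL}}^1_{\alpha,0,\Omega}$.

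The only genuine obstacle I foresee is the first step, namely rigorously justifying the pointwise-in-$x$ representation via the fundamental theorem of calculus for a generic $U \in {\operatorname{BL}}^1_{\alpha,0,\Omega}$. Once that is handled by density in the Muckenhoupt-weighted Sobolev setting (the required facts are standard for $\alpha \in (-1,1)$), the remainder of the argument is elementary weighted calculus and a textbook completeness check.
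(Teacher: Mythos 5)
Your proof is correct and rests on the same three ingredients the paper uses—the fundamental theorem of calculus in $y$, the weighted Cauchy--Schwarz splitting $1 = t^{-\alpha/2}t^{\alpha/2}$ (valid since $\alpha\in(-1,1)$), and the trace inequality~\eqref{eq:L3.8-KarMel19}—but the packaging differs in two minor ways. First, you prove~(ii) before~(i) and integrate directly over $\R^d$ using the support property $\operatorname{tr} U = 0$ on $\Omega^c$, which shortcuts the paper's decomposition of $\R^d$ into translated hypercubes $K_j$ with a Kronecker-delta bookkeeping argument; both are fine, and your version is slightly leaner. Second, for completeness in~(i), you run an explicit Cauchy-sequence/patching argument, whereas the paper instead cites that $\operatorname{BL}^1_\alpha$ (with the norm $\|U\|_{L^2_\alpha(K)}^2 + \|\nabla U\|_{L^2_\alpha}^2$) is already a Hilbert space by \cite[Lem.~3.2]{KarMel19} and then establishes norm equivalence on the subspace ${\operatorname{BL}}^1_{\alpha,0,\Omega}$ via the lifting operator of \cite[Lem.~3.9]{KarMel19} combined with \eqref{eq:Htildet-vs-HtRd} and \cite[Lem.~3.8]{KarMel19}; your route is more self-contained but implicitly uses the same density fact from \cite{KarMel19} to justify the FTC step. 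No gap—just make sure, when patching slab-wise limits, to check that the trace of the limit is well defined via \eqref{eq:L3.8-KarMel19} applied to $U_n - U_m$, which you do. Overall, essentially the paper's argument with a cleaner order of operations.
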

\begin{proof}
Details of the proof
are given in Appendix~\ref{app:lemma:properties-of-H1alpha}. 
\end{proof}

With Lemma~\ref{lemma:properties-of-H1alpha} in hand, 
existence and uniqueness of solutions of (\ref{eq:minimization}) follows from 
the Lax-Milgram Lemma since, for $F \in L^2_{-\alpha}(\R^d \times (0,H))$ and $f \in H^{-s}(\Omega)$, 
the map $U \mapsto \int_{\R^d \times (0,H)} F U + \int_\Omega f \operatorname{tr} U$ 
in \eqref{eq:calFdef} extends to a bounded linear functional on ${\operatorname{BL}}^1_{\alpha,0,\Omega}$.
In view of (\ref{eq:lemma:properties-of-H1alpha}) and the trace estimate 
(\ref{eq:L3.8-KarMel19}),
the minimization problem (\ref{eq:minimization}) 
admits by Lax-Milgram a unique solution $U \in {\operatorname{BL}}^1_{\alpha,0,\Omega}$
with the {\sl a priori} estimate 
\begin{align}
\label{eq:energy-estimate} 
\|\nabla U\|_{L^2_{\alpha}(\R^d \times \Rpos)} \leq C \left[ \|F\|_{L^2_{-\alpha}(\R^d \times (0,H))} + \|f\|_{H^{-s}(\Omega)}\right] 
\end{align}
with constant $C$ dependent on $s\in (0,1)$, $H > 0$, and $\Omega$. 

The Euler-Lagrange equations formally satisfied by the solution $U$ of (\ref{eq:minimization}) are: 
\begin{subequations}
\label{eq:extension2D}
\begin{align}
\label{eq:extension2D-a}
 -\operatorname*{div} (y^\alpha \nabla U) &= F  &&\text{in} \; \R^d \times (0,\infty), \\ 
\label{eq:extension2D-b}
 \partial_{n_\alpha} U(\cdot,0) & = f  &&\text{in} \; \Omega, \\
\label{eq:extension2D-c}
\operatorname{tr} U & = 0 &&\text{on $\Omega^c$},
\end{align}
\end{subequations}
where $\partial_{n_\alpha} U(x,0) = - d_s\lim_{y \rightarrow 0}  y^\alpha \partial_y U(x,y)$ and we implicitly extended $F$ to $\R^d \times \Rpos$ by zero.
In view of \eqref{eq:DtNoperator} together with the fractional PDE $(-\Delta)^s u = f$, this is a Neumann-type Caffarelli-Silvestre extension problem with an additional source $F$.

\begin{remark}\label{remk:local-extension2D}
\begin{enumerate}[(i)]
\item \label{remk:local-extension2D-1}
The system \eqref{eq:extension2D} is understood in a weak sense, i.e., to find $U \in {\operatorname{BL}}^1_{\alpha,0,\Omega}$ such that 
\begin{equation}
\label{eq:weak-form}
\forall V \in {\operatorname{BL}}^1_{\alpha,0,\Omega} \colon \quad 
b(U,V) = \int_{\R^d \times \Rpos}  F V \,dx\,dy + \int_\Omega f \operatorname{tr} V\,dx. 
\end{equation}
Due to \eqref{eq:lemma:properties-of-H1alpha}, the integral $\int_{\R^d \times \Rpos}  F V \,dx\,dy $ is well-defined.

\item \label{remk:local-extension2D-2}
For the notion of solution of \eqref{eq:extension2D}, the support requirement $\operatorname{supp} F \subset \R^d \times [0,H]$ can be relaxed 
e.g., to $F \in L^2_{-\alpha}(\R^d \times \Rpos)$ 
by testing with $V \in H^1_{\alpha,0,\Omega}(\R^d \times \Rpos):= H^1_\alpha(\R^d \times \Rpos) \cap {\operatorname{BL}}^1_{\alpha,0,\Omega} $. In this case, the integral $\int_{\R^d \times \Rpos}  F V \,dx\,dy $ is well-defined by Cauchy-Schwarz.

\item \label{remk:local-extension2D-4}
We note that working with functions supported in $\R^d \times [0,H]$ 
induces an implicit dependence on $H$ of all constants, 
which is due to the Poincar\'e type estimate \eqref{eq:lemma:properties-of-H1alpha}. 
Alternatively to restricting the test space, 
one could also circumvent this by introducing suitable weights 
that control the behavior of $F$ at infinity; we do not develop this here.
\eremk
\end{enumerate}
\end{remark}

\subsection{Global regularity: a shift theorem}
\label{sec:GlRegShThm}
The following lemma provides additional regularity of the extension problem in the $x$--direction. 
The argument uses the technique developed in \cite{Savare} (see also \cite{EF1999,Eb2002})
that has recently been used in \cite{BN21} 
to show a closely related shift theorem for the Dirichlet fractional Laplacian; 
the technique merely assumes $\Omega$ to be a Lipschitz domain in $\R^d$. 
On a technical level, 
the difference between \cite{BN21} and Lemma~\ref{lem:regularity2D} below is that 
Lemma~\ref{lem:regularity2D} studies (tangential) differentiability
properties of the extension $U$, whereas \cite{BN21} focuses on the trace $u = \operatorname{tr} U$.
\bigskip

For functions $U$, $F$, $f$, it is convenient to introduce the abbreviation
\begin{equation}
\label{eq:CUFf} 
N^2(U,F,f)\coloneqq  \|\nabla U\|_{L^2_\alpha(\R^d \times \Rpos)} 
\left(
\|\nabla U\|_{L^2_{\alpha}(\R^d \times \Rpos)} 
+ 
\|F\|_{L^2_{-\alpha}(\R^d \times (0,H))}
+ 
\|f\|_{H^{1-s}(\Omega)}\right). 
\end{equation}
In view of the {\sl a priori} estimate \eqref{eq:energy-estimate}, 
we have the simplified bound (with updated constant $C$)
\begin{equation}
\label{eq:CUFf-simplified}
N^2(U,F,f) 
\leq 
C \left( \|f\|^2_{H^{1-s}(\Omega)} + \|F\|^2_{L^2_{-\alpha}(\R^d\times(0,H))} \right).
\end{equation}

\begin{lemma}
\label{lem:regularity2D}
Let $\Omega \subset \R^d$ be a bounded Lipschitz domain, and
let $B_{\widetilde R} \subset \R^d$ be a ball with $\Omega \subset B_{\widetilde R}$. 
For $t \in [0,1/2)$, there is a constant 
$C_t > 0$ (depending only on $s$, $t$, $\Omega$, $\widetilde R$, and $H$) 
such that for $f\in C^\infty(\overline{\Omega})$,
$F \in L^2_{-\alpha}(\R^d \times (0,H))$ 
the solution $U$ of (\ref{eq:minimization}) satisfies 
\begin{align*}
\int_{\Rpos} y^\alpha \norm{\nabla U(\cdot, y)}_{H^t(B_{\widetilde R})}^2 dy 
\leq C_t N^2(U,F,f)
\end{align*}
with $N^2(U,F,f)$ given by (\ref{eq:CUFf}). 
\end{lemma}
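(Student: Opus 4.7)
The plan is to adapt the Nirenberg difference-quotient technique of Savaré \cite{Savare} (as used in \cite{BN21} for the fractional Laplacian itself) to the Caffarelli--Silvestre extension problem. The argument proceeds by a partition-of-unity reduction, a local tangential difference-quotient Caccioppoli estimate, and an integration in the shift parameter $h$ that reconstructs a fractional Slobodeckij seminorm.

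First I would fix a finite open cover $\{B_k\}$ of $B_{\widetilde R}$ with subordinate cutoffs $\chi_k$, splitting into interior/exterior patches ($B_k\cap\partial\Omega=\emptyset$) and boundary patches. In the first case, all $x$-translations commute with $y^\alpha$ and with the zero-trace constraint (which is vacuous or trivial locally), so the standard Nirenberg/Caccioppoli iteration for $b$ yields full tangential $H^1$-regularity of $\nabla U$ integrated against $y^\alpha$, which more than suffices. In a boundary patch, a bi-Lipschitz flattening of $\partial\Omega\cap B_k$ straightens the boundary to $\{x_d=0\}$, and $b$ transforms into
\begin{equation*}
  \widetilde b(\widetilde U,V)=\int y^\alpha A(x)\nabla\widetilde U\cdot\nabla V\,dx\,dy,
\end{equation*}
where $A\in L^\infty(\R^d)$ is symmetric and uniformly elliptic, but not smooth in general since $\partial\Omega$ is only Lipschitz.

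Second, for a tangential unit vector $e_i$ with $1\leq i\leq d-1$ and small $|h|$, the shift $\tau_h\widetilde U(x,y):=\widetilde U(x+he_i,y)$ preserves the local zero-trace condition on $\{x_d<0\}$. A change of variables in $\widetilde b$, combined with subtracting the weak formulations for $\tau_h\widetilde U$ and $\widetilde U$ and the adjoint identity $\langle\widetilde F,\tau_{-h}V-V\rangle=\langle\tau_h\widetilde F-\widetilde F,V\rangle$, gives
\begin{equation*}
  \widetilde b(\tau_h\widetilde U-\widetilde U,V)=\langle\tau_h\widetilde F-\widetilde F,V\rangle+\langle\tau_h\widetilde f-\widetilde f,\operatorname{tr} V\rangle+\int y^\alpha(\tau_{-h}A-A)\nabla\widetilde U\cdot\tau_{-h}\nabla V\,dx\,dy.
\end{equation*}
Testing with $V=\chi_k^2(\tau_h\widetilde U-\widetilde U)$ and absorbing cross-terms from $\nabla\chi_k$ via Young's inequality produces a local Caccioppoli estimate. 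The $\widetilde F$-term is handled again by the adjoint identity, which transfers all $h$-dependence onto the smooth cutoff $\chi_k^2$ (since $\widetilde F$ itself is only in $L^2_{-\alpha}$); the $\widetilde f$-term uses $\|\tau_h f-f\|_{H^{-s}(\Omega)}\lesssim|h|\,\|f\|_{H^{1-s}(\Omega)}$ together with the trace estimate \eqref{eq:L3.8-KarMel19}.

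Third, to extract the fractional $H^t$-norm I would multiply the resulting pointwise-in-$h$ estimate by $|h|^{-d-2t}$ and integrate over a small ball of tangential shifts $h\in\R^{d-1}$. The left-hand side then reconstructs the tangential Slobodeckij seminorm of $\nabla U$ integrated against $y^\alpha$, while all right-hand terms with explicit $|h|^2$-decay contribute finite quantities for $t<1$. The critical term is the $A$-commutator: since $A\in L^\infty$ is generally not Lipschitz, $\|\tau_{-h}A-A\|_{L^\infty}$ does not vanish pointwise with $h$, yet the averaged bound $\int_{|h|<\delta}|h|^{-d-2t}\|\tau_{-h}A-A\|_{L^\infty}^2\,dh<\infty$ holds precisely when $t<1/2$. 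The remaining normal direction in $x$ is recovered from the equation $-\operatorname{div}(y^\alpha\nabla U)=F$, which expresses $y^\alpha\partial_{x_d}^2 U$ in terms of already-controlled tangential and $y$-derivatives and $F$; summation over $k$ then yields the isotropic $H^t(B_{\widetilde R})$-bound.

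The main obstacle is the $A$-commutator: the Lipschitz regularity of $\partial\Omega$ provides only averaged, Slobodeckij-type control on $\tau_{-h}A-A$, and it is this precise barrier that caps the shift at $t<1/2$, a ceiling already sharp for the classical Dirichlet Laplacian on Lipschitz domains in \cite{Savare}.
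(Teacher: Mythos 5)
Your proposal takes a genuinely different route from the paper, and that route contains a gap that is precisely what Savar\'e's technique is designed to circumvent.

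The paper does \emph{not} flatten the boundary. Following Savar\'e, it works directly on the Lipschitz domain with the one-sided shift $T_h U = \eta U_h + (1-\eta)U$, restricted to the set $D$ of admissible ``outward-pointing'' directions for which $\operatorname{tr}(T_h U)$ still vanishes outside $\overline{\Omega}$. This avoids any coefficient matrix altogether: the bilinear form $b$ stays untouched, and the only quantity that needs estimating is the modulus $\omega(U) = \sup_h |h|^{-1}(\mathcal F(T_h U) - \mathcal F(U))$. The resulting estimate $[U - T_h U]^2 \lesssim |h|\,\widetilde C^2_{U,F,f}$ has first-order decay in $|h|$, and it is this first order (not second) that produces the ceiling $t<1/2$ in Step~6: $\int_{|h|\le\varepsilon}|h|^{1-d-2t}\,dh<\infty$ iff $t<1/2$. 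A separate geometric step then removes the restriction $h\in D$.

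Your approach flattens the boundary with a bi-Lipschitz map and shifts in tangential directions. The cost is a coefficient matrix $A\in L^\infty$ that is merely bounded, not continuous, because the Jacobian of a Lipschitz change of variables is only $L^\infty$. You then assert
\[
\int_{|h|<\delta}|h|^{-d-2t}\,\|\tau_{-h}A-A\|_{L^\infty}^2\,dh<\infty \quad \text{for } t<1/2.
\]
This is false: for an $L^\infty$ coefficient the quantity $\|\tau_{-h}A-A\|_{L^\infty}$ need not decay at all as $h\to 0$, and then the integral diverges for every $t\ge 0$ (the radial integrand is $\sim r^{-1-2t}$ when $h$ ranges over tangential directions in $\R^{d-1}$). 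Even if $\partial\Omega$ were $C^{1,1}$ so that $A$ were Lipschitz, you would get $\|\tau_{-h}A-A\|_{L^\infty}\lesssim|h|$ and hence convergence for all $t<1$, not $t<1/2$ --- so the $t<1/2$ ceiling does not emerge from your accounting of the commutator term. In short, the commutator cannot be controlled in $L^\infty$ for a Lipschitz domain, and your bookkeeping of where the $1/2$ comes from is inconsistent. There is also a secondary soft spot: recovering normal-in-$x$ regularity of $\nabla U$ from the degenerate equation $-\operatorname{div}(y^\alpha\nabla U)=F$ in the flattened variables is nontrivial, since the equation mixes $x_d$- and $y$-derivatives and $F$ is only $L^2_{-\alpha}$; the paper sidesteps this because the Savar\'e shift provides the full $x$-gradient estimate at once.

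To repair your route you would essentially need to abandon the $L^\infty$ commutator bound and argue by an interpolation/BMO-type argument on $A$, or switch back to the Savar\'e operator $T_h U=\eta U_h+(1-\eta)U$ with admissible directions, which is what the paper does and which eliminates the coefficient matrix entirely.
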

\begin{proof}
The idea is to apply the difference quotient argument from \cite{Savare} only in the $x$-direction.

Let $x_0 \in \overline{\Omega}$ be arbitrary. For $h \in \R^d$ denote 
$T_h U \coloneqq  \eta U_h + (1-\eta)U$, where $U_h(x,y) \coloneqq  U(x+h,y)$ and 
$\eta$ is a cut-off function that localizes to a suitable ball $B_{2\rho}(x_0)$, i.e, $0\leq \eta \leq 1$, 
$\eta \equiv 1$ on  $B_{\rho}(x_0)$ and $\operatorname{supp}\eta \subset B_{2\rho}(x_0)$. In Steps~1--5 of this proof, 
we will abbreviate $B_{\rho'}$ for $B_{\rho'}(x_0)$ for $\rho' > 0$. 

The main result of \cite{Savare} is that estimates for the modulus $\omega(U)$ defined 
with the quadratic functional $\mathcal{F}$ as in \eqref{eq:calFdef} by 
\begin{align*}
\omega(U) &\coloneqq  \sup_{h \in D\backslash \{0\}} \frac{\mathcal{F}(T_h U) - \mathcal{F}(U)}{\abs{h}}  = \omega_{b}(U) + \omega_{F}(U) + \omega_{f}(U), \\
\omega_b(U) &\coloneqq   \frac{1}{2}\sup_{h \in D\backslash  \{0\}} \frac{b(T_h U,T_h U) - b(U,U)}{\abs{h}} , \\ 
\omega_F(U) &\coloneqq   \sup_{h \in D\backslash  \{0\}} \frac{\int_{\R^d \times (0,H)} F (T_h U - U)}{\abs{h}}, 
\qquad 
\omega_f(U) \coloneqq   \sup_{h \in D\backslash  \{0\}} \frac{\int_{\Omega}  f \operatorname{tr}( T_h U - U)}{\abs{h}}, 
\end{align*}
can be used to derive regularity results in Besov spaces.
Here, $D \subset \R^d$ denotes a set of admissible directions $h$. These directions are chosen such that the function $T_h U$ is an 
admissible test function, i.e., $T_h U \in {\operatorname{BL}}^1_{\alpha,0,\Omega}$. For this, we have to require 
$\supp \operatorname{tr}(T_h U) \subset \overline{\Omega}$.
In \cite[(30)]{Savare} a description of this set is given in terms of a set of admissible outward pointing vectors 
$\mathcal{O}_\rho(x_0)$, which are directions 
$h$ with $\abs{h} \leq \rho$ such that for all $t \in [0,1]$ the translate $B_{3\rho}(x_0)\backslash \Omega + th$ is completely contained in 
$\Omega^c$.

{\bf Step 1.} (Estimate of $\omega_b(U)$). The definition of the bilinear form $b(\cdot,\cdot)$, $h \in D$,  
and the definition of $T_h$ give 
\begin{align*}
& b(T_h U,T_h U)-b(U,U) = \int_{\R^d \times \Rpos}y^\alpha ( \abs{\nabla T_h U}^2- \abs{\nabla U}^2) \, dx \, dy \\
& \qquad =  \int_{\R^d \times \Rpos}y^\alpha ( \abs{(\nabla \eta) (U_h-U) + T_h \nabla U}^2- \abs{\nabla U}^2) \, dx \, dy \\
&\qquad = \int_{\R^d \times \Rpos}y^\alpha ( \abs{(\nabla \eta) (U_h-U)}^2 + 2 T_h \nabla U \cdot (\nabla \eta) (U_h-U)) \, dx \, dy 
+  
\int_{\R^d \times \Rpos}y^\alpha(\abs{T_h \nabla U}^2- \abs{\nabla U}^2) \, dx \, dy \\
&\qquad \eqqcolon  T_1 + T_2.
\end{align*}
For the first integral $T_1$, 
we use the support properties of $\eta$ and that 
$\norm{U(\cdot,y)-U_h(\cdot,y)}_{L^2(B_{2\rho})} \lesssim \abs{h} \norm{\nabla U(\cdot,y)}_{L^2(B_{3\rho})}$, 
which gives
\begin{align*}
T_1 &\lesssim  \int_{\Rpos}y^\alpha(\abs{h}^2\norm{\nabla U(\cdot,y)}_{L^2(B_{3\rho})}^2  + \abs{h}\norm{\nabla U(\cdot,y)}_{L^2(B_{3\rho})}\norm{T_h \nabla U(\cdot,y)}_{L^2(B_{2\rho})}) \, dy  \\
& \lesssim \abs{h} \int_{B_{3\rho}^+}y^\alpha \abs{\nabla U}^2\, dx \, dy.
\end{align*}
For the term $T_2$, we first note $\abs{T_h \nabla U}^2 \leq \eta \abs{\nabla U_h}^2 + (1-\eta)\abs{\nabla U}^2$ 
since $0 \leq \eta \leq 1$. Using the variable transformation $z = x+h$ together with $B_{2\rho}(x_0)+h \subset B_{3\rho}(x_0)$ we obtain
\begin{align*}
T_2 &= \int_{\R^d \times \Rpos}y^\alpha(\abs{T_h \nabla U}^2- \abs{\nabla U}^2) \, dx \, dy
\leq 
 \int_{\Rpos}\int_{B_{2\rho}}y^\alpha \eta (\abs{\nabla U_h}^2- \abs{\nabla U}^2) \, dx \, dy \\
 &\leq   \int_{\Rpos}\int_{B_{3\rho}}y^\alpha (\eta(x-h)-\eta(x)) \abs{\nabla U}^2 \, dx \, dy \lesssim 
 \abs{h} \int_{B_{3\rho}^+}y^\alpha \abs{\nabla U}^2 \, dx \, dy.
\end{align*}
Altogether we get from the previous estimates that
$$
\omega_b(U) \lesssim \int_{B_{3\rho}^+}y^\alpha \abs{\nabla U}^2 \, dx \, dy.
$$

{\bf Step 2.} (Estimate of $\omega_F(U)$).
Using the definition of $T_h$, we can write $U-T_h U = \eta(U-U_h)$, and 
$\operatorname{supp}\eta \subset B_{2\rho}(x_0)$ implies 
\begin{align}\label{eq:SavTmp2}
 \left| \int_{\R^d \times (0,H)} F (U-T_h U)\, dx \, dy\right| &= \left| \int_{\R^d \times (0,H)} F\eta (U-U_h) \, dx \, dy \right| \leq \norm{F}_{L^2_{-\alpha}(B_{2\rho}\times (0,H))}
 \norm{U-U_h}_{L^2_\alpha(B_{2\rho}^+)} \nonumber \\
 &\lesssim \abs{h} \norm{F}_{L^2_{-\alpha}(B_{2\rho} \times (0,H))} \norm{\nabla U}_{L^2_{\alpha}(B^+_{3\rho})},
\end{align}
which produces 
$$
\omega_F(U) \lesssim \|F\|_{L^2_{-\alpha}(B_{2\rho}\times (0,H))} \|\nabla U\|_{L^2_{\alpha}(B^+_{3\rho})}. 
$$

{\bf Step 3.} (Estimate of $\omega_f(U)$).
For the trace term, we use a second cut-off function $\widetilde\eta \in C^\infty_0({\mathbb R}^{d+1})$ with $\widetilde \eta \equiv 1$ 
on $B_{3\rho}(x_0) \times \{0\} $ and $\operatorname{supp}(\widetilde \eta) \subset B_{4\rho}(x_0) \times (-H,H)$ and  get with the trace inequality (\ref{eq:L3.8-KarMel19}) and the estimate (\ref{eq:lemma:properties-of-H1alpha})
since $\operatorname{supp} (f \eta - (f\eta)_{-h}) \subset B_{3\rho}$
\begin{align}\label{eq:SavTmp3}
\left|  \int_{\Omega} f \operatorname{tr}(U-T_h U)\, dx \right| &= 
 \left| \int_{B_{2\rho}} f \eta \operatorname{tr}(U-U_h)\, dx \right| =  
\left| \int_{B_{3\rho}} (f \eta - (f\eta)_{-h}) \operatorname{tr}(\widetilde \eta U)\, dx\right|  \nonumber
\\&\leq \norm{f \eta - (f\eta)_{-h}}_{H^{-s}(B_{3\rho})} \norm{\operatorname{tr}(\widetilde \eta U)}_{\widetilde{H}^s(B_{3\rho})} \nonumber
\\ 
&\stackrel{(\ref{eq:L3.8-KarMel19}), (\ref{eq:lemma:properties-of-H1alpha})}{\lesssim} \abs{h} \norm{f}_{H^{1-s}(B_{4\rho})}
\|\nabla U\|_{L^2_\alpha(\R^d \times \Rpos)},
 \end{align}
where the estimate $\|f \eta - (f \eta)_{-h}\|_{H^{-s}(B_{3 \rho})} \lesssim |h| \|f\|_{H^{1-s}(B_{4\rho})}$ can be seen, for example, 
by interpolating the estimates
$\|f \eta - (f \eta)_{-h}\|_{H^{-1}(\R^d)} \lesssim |h| \|\eta f\|_{L^{2}(\R^d)}$ and 
$\|f \eta - (f \eta)_{-h}\|_{L^{2}(\R^d)} \lesssim |h| \|\eta f\|_{H^{1}(\R^d)}$, see, e.g.,  \cite{tartar07}.  We have thus obtained
\begin{align*}
\omega_f(U) \lesssim \|f\|_{H^{1-s}(B_{4\rho})} 
\|\nabla U\|_{L^2_\alpha(\R^d \times \Rpos)}.  
\end{align*}

{\bf Step 4.} (Application of the abstract framework of \cite{Savare}).
We introduce the seminorm $[U]^2\coloneqq  \int_{\R^d \times \Rpos} y^\alpha |\nabla U|^2\,dx dy$. By the 
coercivity of $b(\cdot,\cdot)$ on ${\operatorname{BL}}^1_{\alpha,0,\Omega}$ with respect to $[\cdot]^2$ 
and the abstract 
estimates in \cite[Sec.~{2}]{Savare}, we have 
\begin{align*}
[U-T_h U]^2  \overset{\footnotesize\cite{Savare}}&{\lesssim}  \omega(U) |h| \lesssim |h| \left(\omega_b(U) + \omega_F(U) + \omega_f(U)\right) \\
 \overset{\text{steps~1-3}}&{\leq }
|h|\left(
\|\nabla U\|^2_{L^2_\alpha(B^+_{3\rho})} 
+ \|F\|_{L^2_{-\alpha}(B^+_{2\rho})} \|\nabla U\|_{L^2_\alpha(\R^d \times \Rpos)} + \|f\|_{H^{1-s}(B_{4\rho})}\|\nabla U\|_{L^2_\alpha(\R^d \times \Rpos)} 
\right) \\
& \eqqcolon |h| \; \widetilde{C}^2_{U,F,f}. 
\end{align*}
Using that $\eta \equiv 1$ on $B^+_{\rho}(x_0)$, we get 
\begin{align} 
\label{eq:local-10} 
\int_{B^+_\rho} y^\alpha |\nabla U - \nabla U_h|^2\, dx \, dy &\leq
\int_{\R^d \times \Rpos} y^\alpha |\nabla (\eta U - \eta U_h)|^2\, dx \, dy  
 = 
[U - T_h U]^2 \leq |h| \; \widetilde C^2_{U,F,f}.
\end{align} 

{\bf Step 5:} (Removing the restriction $h \in D$). 
The set $D$ contains a truncated cone $C = \{x \in \R^d\,:\, |x\cdot e_D| > \delta |x|\} \cap B_{R'}(0)$ for some unit vector $e_D$ 
and $\delta  \in (0,1)$, $R' > 0$. 
Geometric considerations then show that there is $c_D > 0$ sufficiently large such that for arbitrary $h \in \R^d$ sufficiently small, 
$h + c_D |h| e_D \in D$. For a function $v$ defined on $\R^d$, we observe 
\begin{align*}
v(x) - v_h(x) & = v(x) - v(x+h) 
                   = v(x) - v(x+(h+c_D |h| e_D)) + v((x+h) + c_D |h| e_D) - v(x+h). 
\end{align*}
We may integrate over $B_{\rho'}(x_0)$ and  change variables to get 
\begin{align*}
\norm{v - v_h}^2_{L^2(B_{\rho'})} \leq 
2 \norm{v - v_{h+c_D |h| e_D}}^2_{L^2(B_{\rho'})} + 2 \norm{v - v_{c_D |h| e_D}}^2_{L^2(B_{\rho'} + h)}.
\end{align*}
Selecting $\rho' = \rho/2$ and for $|h| \leq \rho/2$, we obtain 
\begin{align*}
\norm{v  - v_h}^2_{L^2(B_{\rho/2})} \leq 2 \norm{v - v_{h+c_D|h| e_D}}^2_{L^2(B_{\rho})} + 
2 \norm{v - v_{c_D |h| e_D}}^2_{L^2(B_\rho)}. 
\end{align*}
Applying this estimate with $v = \nabla U$ and using that $h + c_D |h| e_D \in D$ and $c_D |h| e_D \in D$, we get from (\ref{eq:local-10}) that
\begin{align*}
\norm{\nabla U - \nabla U_h}^2_{L^2_\alpha(B^+_{\rho/2})} 
\lesssim |h| \; \widetilde C^2_{U,F,f}.
\end{align*}
The fact that $\Omega$ is a Lipschitz domain implies that the value of $\rho$ and the constants appearing in the definition of the truncated cone $C$ 
can be controlled uniformly in $x_0 \in \Omega$. Hence, covering the ball $B_{2\widetilde{R}}$ (with twice the radius as the ball $B_{\widetilde{R}}$) 
by finitely many balls $B_{\rho/2}$, we obtain with the constant $N(U,F,f)$ of (\ref{eq:CUFf}) 
that for all $h \in \R^d$ with $|h| \leq \delta' $ for some fixed $\delta' > 0$: 
\begin{align}
\label{eq:control-of-U-Uh}
\norm{\nabla U - \nabla U_h}^2_{L^2_\alpha(B_{2\widetilde{R}})}  
\lesssim |h| \; N^2(U,F,f). 
\end{align}

{\bf Step 6:} ($H^t(B_{\widetilde{R}})$--estimate). For $t < 1/2$, we estimate with the Aronstein-Slobodecki seminorm 
\begin{align*}
\int_{\Rpos} |\nabla U(\cdot,y)|^2_{H^t(B_{\widetilde{R}})}\,dy  & \leq 
\int_{\Rpos} \int_{x \in B_{\widetilde{R}}} \int_{|h| \leq {2\widetilde{R}}} \frac{|\nabla U(x+h,y) - \nabla U(x,y)|^2}{|h|^{d+2t}}\,dh\; dx\; dy.  
\end{align*}
The integral in $h$ is split into the range $|h| \leq \varepsilon$ for some fixed $\varepsilon>0$, for which 
(\ref{eq:control-of-U-Uh}) can be brought to bear, and $\varepsilon < |h| < 2\widetilde{R}$, for which a triangle inequality can be used. We obtain 
\begin{align*}
\int_{\Rpos} |\nabla U(\cdot,y)|^2_{H^t(B_{\widetilde{R}})}\,dy  & \lesssim 
N^2(U,F,f) \int_{|h| \leq \varepsilon} |h|^{1-d-2t}\,dh + \|\nabla U\|^2_{L^2_\alpha(\R^d \times \Rpos)} \int_{\varepsilon < |h| < 2\widetilde{R}} |h|^{-d-2t}\,dh
\\ &\lesssim N^2(U,F,f),
\end{align*}
which is the sought estimate. 
\end{proof}

\begin{remark}
The regularity assumptions on $F$ and $f$ can be weakened by interpolation techniques as described in \cite[Sec.~4]{Savare}. 
For example, by linearity, we may write $U = U_F + U_f$, where $U_F$ and $U_f$ solve
(\ref{eq:extension2D}) for data $(F,0)$ and $(0,f)$. The {\sl a priori} estimate (\ref{eq:energy-estimate}) gives 
$\|\nabla U_f\|_{L^2_{\alpha}(\R^d \times \Rpos)} \leq C \|f\|_{H^{-s}(\Omega)}$ so that we have 
\begin{align*}
\int_{\Rpos} |\nabla U_f(\cdot,y)|^2_{H^t(B_{\widetilde{R}})}\,dy & \leq C_t \left( \|\nabla U_f\|^2_{L^2_\alpha(\R^d \times \Rpos)} + \|f\|_{H^{1-s}(\Omega)} \|\nabla U_f\|_{L^2_\alpha(\R^d \times\Rpos)}\right) 
\\
& \lesssim \|f\|^2_{H^{-s}(\Omega)} + \|f\|_{H^{1-s}(\Omega)} \|f\|_{H^{-s}(\Omega)} \lesssim 
 \|f\|_{H^{1-s}(\Omega)} \|f\|_{H^{-s}(\Omega)}.  
\end{align*}
By, e.g., \cite[Lemma~{25.3}]{tartar07}, the mapping $f \mapsto U_f$ then satisfies 
\begin{align*}
\int_{\Rpos} |\nabla U_f(\cdot,y)|^2_{H^t(B_{\widetilde{R}})} \,dy
\leq C_t \|f\|^2_{B^{1/2-s}_{2,1}(\Omega)},
\end{align*}
where $B^{1/2-s}_{2,1}(\Omega) = (H^{-s}(\Omega),H^{1-s}(\Omega))_{1/2,1}$ is an interpolation space ($K$-method). 
We mention that $B^{1/2-s}_{2,1}(\Omega) \subset H^{1/2-s-\varepsilon}(\Omega)$ 
for every $\varepsilon > 0$. 
A similar estimate could, in principle,  be obtained for $U_F$; however,  the pertinent interpolation space is less tractable. 
\eremk
\end{remark}

\subsection{Interior regularity for the extension problem} 
\label{sec:IntReg}

In the following, we derive localized interior regularity estimates, also called Caccioppoli inequalities, for solutions to the extension problem \eqref{eq:extension2D}, 
where second order derivatives on some ball $B_R(x_0)\subset \Omega$ can be controlled by first order derivatives on some ball with a (slightly) larger radius.

The following Caccioppoli type inequality provides local control of higher order $x$-derivatives and is structurally similar to \cite[Lem.~4.4]{FMP21}. 
\begin{lemma}[Interior Caccioppoli inequality]
\label{lem:CaccType2D} 
Let $B_R \coloneqq  B_R(x_0) \subset \Omega \subset \R^d$ be an open ball of radius $R>0$ centered at $x_0 \in \Omega$, 
and let $B_{cR}$ be the concentric scaled ball of radius $cR$ with $c \in (0,1)$. Let $\zeta \in C^\infty_0({B_R})$ with 
$0 \leq \zeta \leq 1$ and $\zeta \equiv 1$ on $B_{cR}$ as well as 
$\|\nabla \zeta\|_{L^\infty(B_R)} \leq C_\zeta ((1-c)R)^{-1}$ for some $C_\zeta > 0$ 
independent of $c$, $R$.  
Let $U$ satisfy \eqref{eq:extension2D}
for given data $f$ and $F$ with $\supp F \subset \R^d \times [0,H]$.

Then, there exists a constant $C_{\rm int} > 0$, which depends only on $s$, $\Omega$, and $C_\zeta$,
such that for $i \in \{1,\dots,d\}$ 
\begin{align}
\label{eq:CaccType2D} 
 \norm{\partial_{x_i}(\nabla U)}^{2}_{L^2_\alpha(B_{cR}^+)} \leq \CacInt^{2} \left( ((1-c)R)^{-2} \norm{\nabla U}_{L^2_\alpha(B_R^+)}^{2}
 + \norm{\zeta \partial_{x_i} f}^{2}_{H^{-s}(\Omega)} + \norm{F}^{2}_{L^2_{-\alpha}(B_R^+)}\right). 
\end{align}
Furthermore, $\|\zeta \partial_{x_i} f\|_{H^{-s}(\Omega)} \leq C_{\rm loc} \|\partial_{x_i} f\|_{L^2(B_R)}$ for some $C_{\rm loc} > 0$ 
independent of $R$, $c$, and $f$ (cf.\ Lemma~\ref{lemma:localization-fractional-norms}).
\end{lemma}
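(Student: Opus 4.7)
My plan is to run a standard Caccioppoli-type argument for the equation satisfied by the tangential derivative $\partial_{x_i} U$, justified rigorously via difference quotients in the $x_i$-direction (since $y^\alpha$ is independent of $x$, the coefficients of the equation commute with $\partial_{x_i}$). Concretely, I would test the weak form \eqref{eq:weak-form} with $V = -\partial_{x_i}(\zeta^2 \partial_{x_i} U)$, where $\zeta$ is the given cutoff. Since $\zeta$ has compact support in $B_R \subset \Omega$, this $V$ is admissible in ${\operatorname{BL}}^1_{\alpha,0,\Omega}$. To avoid circularly assuming second-order differentiability of $U$, one replaces $\partial_{x_i}$ by the symmetric difference quotient $\delta_h^{e_i}$, performs all computations with uniform (in $h$) bounds, and passes to the limit $h \to 0$ at the end; summation by parts in $x_i$ is the discrete analogue of the integration by parts used below.

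Using integration by parts in $x_i$ on the left-hand side (the weight $y^\alpha$ is unaffected), the bilinear form becomes
\begin{equation*}
b(U,V) = \int_{\R^d \times \Rpos} y^\alpha \, \partial_{x_i}\nabla U \cdot \nabla(\zeta^2 \partial_{x_i} U)\,dx\,dy
= \|\zeta \, \partial_{x_i}\nabla U\|_{L^2_\alpha}^2 + 2\int y^\alpha \zeta \,(\partial_{x_i} U)\, \nabla \zeta \cdot \nabla \partial_{x_i} U\,dx\,dy.
\end{equation*}
The cross term is absorbed via Young's inequality at the cost of $C\|\nabla\zeta\|_{L^\infty}^2 \|\partial_{x_i} U\|_{L^2_\alpha(B_R^+)}^2 \lesssim ((1-c)R)^{-2}\|\nabla U\|_{L^2_\alpha(B_R^+)}^2$. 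On the right-hand side, the $F$-term is estimated by $L^2_{-\alpha}$/$L^2_\alpha$ duality after expanding $\partial_{x_i}(\zeta^2 \partial_{x_i} U)$, again absorbing a small multiple of $\|\zeta \, \partial_{x_i}\nabla U\|_{L^2_\alpha}^2$. For the $f$-term, one more integration by parts in $x_i$ (legitimate because $\zeta^2 \partial_{x_i}\operatorname{tr}U$ has compact support in $B_R\subset\Omega$) moves the derivative onto $f$, yielding
\begin{equation*}
\int_\Omega f\, \operatorname{tr} V \,dx = \int_\Omega (\zeta\, \partial_{x_i} f)(\zeta\, \partial_{x_i}\operatorname{tr}U)\,dx,
\end{equation*}
which I bound by $\|\zeta\,\partial_{x_i} f\|_{H^{-s}(\Omega)} \, \|\zeta\,\partial_{x_i}\operatorname{tr}U\|_{\widetilde H^s(\Omega)}$. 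The trace $\zeta\,\partial_{x_i}\operatorname{tr}U$ is the trace of $\zeta\,\partial_{x_i} U \in {\operatorname{BL}}^1_{\alpha,0,\Omega}$, so inequality \eqref{eq:L3.8-KarMel19} controls it by $\|\nabla(\zeta\,\partial_{x_i} U)\|_{L^2_\alpha}$, which splits into $\|\zeta\,\partial_{x_i}\nabla U\|_{L^2_\alpha}$ (ready to be absorbed) plus $\|(\nabla\zeta)\partial_{x_i} U\|_{L^2_\alpha} \lesssim ((1-c)R)^{-1}\|\nabla U\|_{L^2_\alpha(B_R^+)}$.

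Combining the estimates, moving the absorbable quadratic terms to the left, and using $\zeta \equiv 1$ on $B_{cR}$ so that $\|\zeta\,\partial_{x_i}\nabla U\|_{L^2_\alpha}^2 \geq \|\partial_{x_i}\nabla U\|_{L^2_\alpha(B_{cR}^+)}^2$ produces \eqref{eq:CaccType2D}. The final sentence of the statement is a standalone localization fact: $\zeta$ is a fixed multiplier supported in $B_R \subset \Omega$, so $\zeta\,\partial_{x_i} f$ lies in $\widetilde H^s$ with compactly supported and smooth regularizer $\zeta$; applying the (separately stated) Lemma~\ref{lemma:localization-fractional-norms} yields $\|\zeta\,\partial_{x_i} f\|_{H^{-s}(\Omega)} \leq C_{\rm loc} \|\partial_{x_i} f\|_{L^2(B_R)}$. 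The main technical obstacle is the $f$-term: the interplay between the \emph{negative} fractional norm on $\zeta\,\partial_{x_i} f$ and the need to keep all constants independent of $c$ and $R$ forces one to carefully retain $\zeta$ on both factors before invoking duality, and then rely on the non-trivial trace estimate \eqref{eq:L3.8-KarMel19} rather than any direct trace theorem with $R$-dependent constants.
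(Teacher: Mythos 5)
Your proposal is correct and matches the paper's proof: both test with a difference-quotient version of $-\partial_{x_i}(\zeta^2\partial_{x_i}U)$, pass the discrete derivative across via summation by parts, absorb the cross term and the second-derivative contribution from the $F$-term by Young, move the (difference-quotient) derivative onto $f$ and invoke $H^{-s}$--$\widetilde H^s$ duality with the trace bound \eqref{eq:L3.8-KarMel19}, and finally pass $\tau\to 0$. The only cosmetic difference is your use of a symmetric rather than one-sided difference quotient, which is immaterial.
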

\begin{proof}
The function $\zeta$ is defined on $\R^d$; through the constant extension we will also view it as a function on $\R^d \times \Rpos$.
With the unit vector $e_{x_i}$ in the $x_i$-coordinate and $\tau\in \R \backslash\{0\}$, we define 
the difference quotient 
\begin{align*}
D_{x_i}^\tau w(x) \coloneqq  \frac{w(x+\tau e_{x_i})-w(x)}{\tau}.
\end{align*}
For $|\tau|$ sufficiently small, we may use the test function $V = D_{x_i}^{-\tau}(\zeta^2 D_{x_i}^\tau U)$ 
in the weak formulation of \eqref{eq:extension2D}
(observe that this is an admissible test function and has support in $\overline{B_R^+}$) and compute
\begin{align*}
\operatorname{tr} V = -\frac{1}{\tau^2} \Big(\zeta^2(x-\tau e_{x_i})(u(x)-u(x-\tau e_{x_i}))+\zeta^2(x)(u(x)-u(x+\tau e_{x_i}))\Big) = D_{x_i}^{-\tau}(\zeta^2 D_{x_i}^\tau u).
\end{align*}
Integration by parts in \eqref{eq:extension2D} over $\R^d \times \Rpos$ and using that the Neumann trace (up to the constant $d_s$ from \eqref{eq:DtNoperator})
produces the fractional Laplacian gives
\begin{align*}
&\int_{\R^d \times \Rpos} F V \, dx \, dy - \frac{1}{d_s} \int_{\R^d} (-\Delta)^s u  \operatorname{tr}V \, dx = 
\int_{\R^d \times \Rpos} y^{\alpha} \nabla U \cdot\nabla V  dx\, dy \\ 
&\qquad\qquad\qquad= \int_{\R^d \times \Rpos} D_{x_i}^\tau (y^\alpha \nabla U) \cdot \nabla (\zeta^2 D_{x_i}^\tau U) \,dx\, dy \\ 
&\qquad\qquad\qquad=  \int_{B_R^+} y^\alpha D_{x_i}^\tau (\nabla U) \cdot \left(\zeta^2 \nabla D_{x_i}^\tau U + 2\zeta \nabla \zeta D_{x_i}^\tau U\right) dx \, dy\\
&\qquad\qquad\qquad =
  \int_{B_R^+} y^\alpha \zeta^2  D_{x_i}^\tau (\nabla U) \cdot D_{x_i}^\tau (\nabla U)\, dx\,dy + 
  \int_{B_R^+} 2 y^\alpha\zeta \nabla\zeta \cdot D_{x_i}^\tau (\nabla U)  D_{x_i}^\tau U\, dx\, dy. 
\end{align*}
We recall that by, e.g., \cite[Sec.~{6.3}]{evans98}, we have uniformly in $\tau$
\begin{equation}
\|D^\tau _{x_i} v\|_{L^2_{\alpha} (\R^d \times \Rpos)} \lesssim \|\partial_{x_i} v\|_{L^2_{\alpha}(\R^d\times\Rpos)}.
\end{equation}
Using the equation $(-\Delta)^s u = f$ on $\Omega$, Young's inequality, 
and the Poincar\'e inequality together with the trace estimate \eqref{eq:L3.8-KarMel19}, 
we get the existence of constants $C_j>0$, $j \in \{1,\dots,5\}$, such that
\begin{align*}
  \norm{\zeta D_{x_i}^\tau(\nabla U)}_{L^2_\alpha(B_R^+)}^2
& \leq 
    C_1 \bigg(
\abs{  \int_{B_R^+} y^\alpha \zeta \nabla\zeta \cdot  D_{x_i}^\tau (\nabla U) D_{x_i}^\tau U \, dx \, dy} + 
  \abs{\int_{\R^d \times \Rpos} F \; D_{x_i}^{-\tau} \zeta^2 D_{x_i}^\tau U \, dx \, dy}  \\
  & \qquad+ \abs{\int_{\R^d} D_{x_i}^{\tau} f  \zeta^2 D_{x_i}^\tau u \, dx}\bigg)\\
  &\leq
  \frac{1}{4}  \norm{\zeta D_{x_i}^\tau(\nabla U)}_{L^2_\alpha(B_R^+)}^2 +  
  C_2 \bigg(\norm{\nabla \zeta}_{L^{\infty}(B_R)}^2\norm{D_{x_i}^\tau U}_{L^2_\alpha(B_R^+)}^2 \\
 & \qquad +  \norm{F}_{L^2_{-\alpha}(B_R^+)} \|\partial_{x_i} (\zeta^2 D^\tau_{x_i} U)\|_{L^2_{\alpha}(B^+_R)} 
 + \norm{\zeta D_{x_i}^\tau f}_{H^{-s}(\Omega)} \norm{\zeta D^\tau_{x_i} u}_{H^s(\R^d)}  \bigg)\\
  & \leq
\frac{1}{2}  \norm{\zeta D_{x_i}^\tau(\nabla U)}_{L^2_\alpha(B_R^+)}^2 
+  
C_3 \bigg(\|\nabla \zeta\|_{L^\infty(B_R)}^2 \|\nabla U\|_{L^2_{\alpha}(B^+_R)}^2 + \|F\|^2_{L^2_{-\alpha}(B^+_R)} 
 \\
&\qquad\qquad + \norm{\zeta D_{x_i}^\tau f}_{H^{-s}(\Omega)} \abs{\zeta D^\tau_{x_i} u}_{H^s(\R^d)}\bigg) 
\\
  \overset{\eqref{eq:L3.8-KarMel19}}
  & \leq
\frac{1}{2}  \norm{\zeta D_{x_i}^\tau(\nabla U)}_{L^2_\alpha(B_R^+)}^2 
  +  
C_4 \bigg(\|\nabla \zeta\|_{L^\infty(B_R)}^2 \|\nabla U\|_{L^2_{\alpha}(B^+_R)}^2 + \|F\|^2_{L^2_{-\alpha}(B^+_R)}  
\\
&\qquad\qquad + \norm{\zeta D_{x_i}^\tau f}_{H^{-s}(\Omega)} 
   \norm{\nabla (\zeta D^\tau_{x_i} U)}_{L^2_{\alpha}( \R^d \times \Rpos)} \bigg)
\\
 &\leq 
   \frac{3}{4}  \norm{\zeta D_{x_i}^\tau(\nabla U)}_{L^2_\alpha(B_R^+)}^2
   \\ & \qquad \qquad
+  C_5 \bigg(\|\nabla \zeta\|_{L^\infty(B_R)}^2 \|\nabla U\|_{L^2_{\alpha}(B^+_R)}^2 + \|F\|^2_{L^2_{-\alpha}(B^+_R)} 
 + \norm{\zeta D_{x_i}^\tau f}^2_{H^{-s}(\Omega)}  \bigg).  
\end{align*}
Absorbing the first term of the right-hand side in the left-hand side and taking the limit $\tau \rightarrow 0$, 
we obtain the sought inequality for the second derivatives 
since $\norm{\nabla \zeta}_{L^{\infty}(B_R)} \lesssim ((1-c)R)^{-1}$.
\end{proof}

Remark that the constant $C_{\mathrm{int}}$ of \eqref{eq:CaccType2D} depends on $s$,
due to the usage of \eqref{eq:L3.8-KarMel19} in the proof above.

The Caccioppoli inequality \eqref{eq:CaccType2D} in Lemma~\ref{lem:CaccType2D} 
can be iterated on concentric balls to provide control of higher order derivatives 
by lower order derivatives locally, in the interior of the domain.
\begin{corollary}[High order interior Caccioppoli inequality]
\label{cor:CaccHighInt} 
Let $B_R \coloneqq  B_R(x_0) \subset \Omega \subset \R^d$ be an open ball of radius $R>0$ centered at $x_0 \in \Omega$, 
and let $B_{cR}$ be the concentric scaled ball of radius $cR$ with $c\in(0,1)$. 
Let $U$ satisfy \eqref{eq:extension2D}
for given data $f$ and $F$ with $\supp F \subset \R^d \times [0,H]$.

Then, 
there exists a constant $\gamma > 0$ (depending only on $\alpha$, $\Omega$, and $c$) 
such that for all 
$\beta \in \mathbb{N}_0^d$ with $p = \betam$ holds
\begin{multline}
\norm{\dbeta\nabla U}^2_{L^2_\alpha(B_{cR}^+)} \leq
 (\gamma p)^{2p} R^{-2p}  \norm{\nabla U}^2_{L^2_\alpha(B_R^+)} \\
  + \sum_{j=1}^p (\gamma p)^{2(p-j)} R^{2(j-p)}\left(\max_{\etam=j}\norm{\deta f}^2_{L^2(B_R)} 
  +\max_{\etam=j-1} \norm{\deta F}^2_{L^2_{-\alpha}(B_{R}^+)}\right).
\end{multline}
\end{corollary}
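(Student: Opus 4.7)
The plan is to iterate the first-order interior Caccioppoli inequality \eqref{eq:CaccType2D} on a telescoping sequence of concentric balls interpolating between $B_{cR}$ and $B_R$. Each iteration pays a factor $\CacInt\,p/((1-c)R)$ (coming from the Lipschitz constant of a cut-off adapted to a shell of width $(1-c)R/p$) while reducing the order of differentiation on $U$ by one and generating one additional data contribution.

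\textbf{Setup and one iteration.} Define the increasing sequence of radii $R_k := cR + k(1-c)R/p$ for $k = 0,\dots,p$, so that $R_0 = cR$, $R_p = R$, and $R_{k+1}-R_k = (1-c)R/p$ uniformly in $k$. Given a multi-index $\beta$ with $\betam = p$, write $\beta = e_{i_1}+\cdots+e_{i_p}$ and set $\beta^{(k)} := \beta - e_{i_1}-\cdots-e_{i_k}$, so $\betam^{(k)} = p-k$. Since $x$-derivatives commute both with the bulk operator $\operatorname{div}(y^\alpha \nabla\,\cdot\,)$ and with the conormal trace $\partial_{n_\alpha}$, the function $\partial^{\beta^{(k)}} U$ is, in any interior ball, a local weak solution of the extension problem with data $(\partial^{\beta^{(k)}} F, \partial^{\beta^{(k)}} f)$. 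Applying Lemma~\ref{lem:CaccType2D} with inner ball $B_{R_{k-1}}$ and outer ball $B_{R_k}$ (so that the Lipschitz constant of the cut-off is controlled by $p/((1-c)R)$) yields
\begin{equation*}
\norm{\partial^{\beta^{(k-1)}}\!\nabla U}^2_{L^2_\alpha(B_{R_{k-1}}^+)}
\leq \CacInt^2\!\left(\frac{p^2}{((1-c)R)^2}\,\norm{\partial^{\beta^{(k)}}\!\nabla U}^2_{L^2_\alpha(B_{R_k}^+)}
+ \norm{\partial^{\beta^{(k-1)}} f}^2_{L^2(B_{R_k})}
+ \norm{\partial^{\beta^{(k)}} F}^2_{L^2_{-\alpha}(B_{R_k}^+)}\right).
\end{equation*}

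\textbf{Telescoping and bookkeeping.} Setting $A := (\CacInt\,p/((1-c)R))^2$ and iterating the above estimate from $k=1$ to $k=p$, the ``principal'' term reduces to $A^{p}\norm{\nabla U}^2_{L^2_\alpha(B_R^+)}$ (since $\beta^{(p)} = 0$ and $R_p = R$), while the data contributions collected at level $k$ carry prefactor $A^{k-1}\CacInt^2$ and involve $\partial^{\beta^{(k-1)}} f$ (of order $p-k+1$) and $\partial^{\beta^{(k)}} F$ (of order $p-k$). Reindexing by $j := p-k+1$, bounding each fixed multi-index by the maximum over multi-indices of the same order, and choosing $\gamma > 0$ large enough to absorb $\CacInt$ and $(1-c)^{-1}$ into a single constant depending only on $\alpha$, $\Omega$, and $c$, yields precisely the stated bound with coefficient $(\gamma p)^{2(p-j)}R^{2(j-p)}$ on the $j$-th data pair and $(\gamma p)^{2p}R^{-2p}$ on $\norm{\nabla U}^2_{L^2_\alpha(B_R^+)}$.

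\textbf{Main obstacle.} Two small points need care. First, the applicability of Lemma~\ref{lem:CaccType2D} to $\partial^{\beta^{(k)}} U$ at each level requires that this derivative already lies in the interior weighted $H^1_\alpha$ space; this is furnished by the Lemma applied at the previous level, so that one obtains a clean bootstrap (alternatively, one directly runs the difference-quotient argument from the proof of Lemma~\ref{lem:CaccType2D} with iterated difference quotients). Second, and more critically for the subsequent analytic regularity theory of Theorem~\ref{thm:mainresult}, the combinatorial factor from the $p$ shells of uniform width $(1-c)R/p$ must be tracked to give \emph{exactly} $p^{2p}$ in the final constant: this is what ensures, after Stirling's formula, the factorial (rather than only exponential) growth needed downstream. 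Hence the key estimate $\|\nabla \zeta_k\|_{L^\infty} \lesssim p/((1-c)R)$, independent of the specific shell, is the crux of the iteration.
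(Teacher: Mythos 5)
Your proof is correct and follows essentially the same route as the paper: iterate Lemma~\ref{lem:CaccType2D} on a nested family of concentric balls whose radii increase in $p$ uniform steps of width $(1-c)R/p$, peeling off one $x$-derivative per step so that the cut-off gradient scales like $p/((1-c)R)$, then telescope, reindex the data contributions, and absorb $\CacInt$, $C_{\mathrm{loc}}$, and $(1-c)^{-1}$ into a single constant $\gamma$. The only (cosmetic) omission is that your one-step inequality should carry the factor $C_{\mathrm{loc}}^2$ in front of the $f$-term, which you implicitly absorb into $\gamma$ at the end, exactly as the paper does.
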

\begin{proof}
We start by noting that the case $p = 0$ is trivially true since empty sums are zero and $0^0 = 1$. 
For $p \ge 1$, we fix a multi index $\beta$ such that $\betam = p$. 
  As the $x$-derivatives commute with the differential operator in
  \eqref{eq:extension2D}, we have that $\dbeta U$ solves 
equation \eqref{eq:extension2D} with data $\dbeta F$ and $\dbeta f$. 
For given $c>0$, let
\begin{equation*}
  c_i = c + (i-1)\frac{1-c}{p}, \qquad i=1, \dots, p+1.
\end{equation*}
Then, we have $c_{i+1}R-c_iR = \frac{(1-c)R}{p}$ and $c_1 R = cR$ as well as $c_{p+1} R = R$.
For ease of notation and without loss of generality, we assume that $\beta_1>0$. 
Applying Lemma~\ref{lem:CaccType2D} iteratively on the sets $B_{c_iR}^+$ for $i>1$ provides 
\begin{align*}
 & \norm{\dbeta \nabla U}^2_{L^2_\alpha(B_{c R}^+)} \\
& \quad \leq \CacInt^{2} 
           \left(\frac{p^2}{(1-c)^2}R^{-2} \norm{\partial_x^{(\beta_1-1, \beta_2)}\nabla U}^2_{L^2_\alpha(B_{c_2 R}^+)}
 + C_{\rm loc}^2 \norm{\dbeta  f}^2_{L^2(B_{c_2 R})} + \norm{\partial_x^{(\beta_1-1, \beta_2)}F}^2_{L^2_{-\alpha}(B_{c_2 R}^+)}  \right) \\
  & \quad \leq \left(\frac{\CacInt p }{(1-c)}\right)^{2p} R^{-2p} \norm{\nabla U}^2_{L^2_\alpha(B_{R}^+)}
    + C_{\rm loc}^2 \sum_{j=1}^p\left(\frac{\CacInt p }{(1-c)}\right)^{2p-2j}R^{-2p+2j}\max_{\etam=j}\norm{\deta  f}^2_{L^2(B_{c_{p-j+2} R})} \\
  &\qquad + \sum_{j=0}^{p-1}\left(\frac{\CacInt p }{(1-c)}\right)^{2p-2j-2}R^{-2p+2j+2}\max_{\etam=j}\norm{\deta F}^2_{L^2_{-\alpha}(B_{c_{p-j+1} R}^+)}. 
\end{align*}
Choosing $\gamma = \max(C_{\rm loc}^2, 1)\CacInt/(1-c)$ concludes the proof.
\end{proof}

The previous Caccioppoli inequalities can also be localized in $y$. To that end, we recall the notation $\omega^{\theta} = \omega \times (0,\theta)$. 

\begin{corollary}\label{cor:localYCaccioppoli}
Let $B_R \coloneqq  B_R(x_0) \subset \Omega \subset \R^d$ be an open ball of radius $R>0$ centered at $x_0 \in \Omega$, 
and let $B_{cR}$ be the concentric scaled ball of radius $cR$ with $c\in(0,1)$. 
Let $0<\theta < \theta^\prime$.
Let $U$ satisfy \eqref{eq:extension2D}
for given data $f$ and $F$ with $\supp F \subset \R^d \times [0,H]$.
Then, 
there exists a constant $\gamma > 0$ (depending only on $\alpha$, $\Omega$, $H$, $\theta$, $\theta^\prime$, and $c$) 
such that there holds for all 
$\beta \in \mathbb{N}_0^d$ with $p = \betam$ 
\begin{multline*}
\norm{\dbeta\nabla U}^2_{L^2_\alpha(B_{cR}^{\theta})} \leq
 (\gamma p)^{2p} R^{-2p}  \norm{\nabla U}^2_{L^2_\alpha(B_R^{\theta^\prime})} \\
  + \sum_{j=1}^p (\gamma p)^{2(p-j)} R^{2(j-p)}\left(\max_{\etam=j}\norm{\deta f}^2_{L^2(B_R)} 
  +\max_{\etam=j-1} \norm{\deta F}^2_{L^2_{-\alpha}(B_{R}^+)}\right).
\end{multline*}
\end{corollary}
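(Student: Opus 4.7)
The plan is to follow the structure of Lemma~\ref{lem:CaccType2D} and Corollary~\ref{cor:CaccHighInt}, inserting an additional smooth cut-off $\chi=\chi(y)\in C^\infty(\R)$ with $0\le\chi\le 1$, $\chi\equiv 1$ on $[0,\theta]$, and $\operatorname{supp}\chi\subset[0,\theta')$, viewed as a function on $\R^d\times\Rpos$ depending only on $y$. First I would prove a $y$-localized analogue of the single-order Caccioppoli inequality \eqref{eq:CaccType2D} by testing the weak form of \eqref{eq:extension2D} against $V=D^{-\tau}_{x_i}(\zeta^2\chi^2 D^\tau_{x_i} U)$. Since $\chi(0)=1$, the trace of $V$ at $y=0$ coincides with the one in Lemma~\ref{lem:CaccType2D}, so the boundary contribution yielding the fractional Laplacian is handled identically. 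The genuinely new term comes from the $y$-component of $\nabla(\zeta^2\chi^2 D^\tau_{x_i}U)$ generated by $\partial_y\chi$; Cauchy-Schwarz and Young's inequality allow a small fraction to be absorbed on the left-hand side, while the remainder is controlled by $\|\partial_y\chi\|_{L^\infty}^2\|D^\tau_{x_i}U\|^2_{L^2_\alpha(B_R^{\theta'})}$ and hence, passing to the limit $\tau\to 0$, by $C(\theta'-\theta)^{-2}\|\nabla U\|^2_{L^2_\alpha(B_R^{\theta'})}$. The outcome is a single-order localized Caccioppoli bound of the form
\[
\|\partial_{x_i}(\nabla U)\|^2_{L^2_\alpha(B_{cR}^\theta)} \lesssim \bigl[((1-c)R)^{-2}+(\theta'-\theta)^{-2}\bigr]\|\nabla U\|^2_{L^2_\alpha(B_R^{\theta'})}+\|\partial_{x_i}f\|^2_{L^2(B_R)}+\|F\|^2_{L^2_{-\alpha}(B_R^+)}.
\]

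Next I would iterate this bound as in Corollary~\ref{cor:CaccHighInt}, but now on concentric \emph{slabs}: set $c_i=c+(i-1)(1-c)/p$ and $\theta_i=\theta+(i-1)(\theta'-\theta)/p$ for $i=1,\dots,p+1$, and apply the single-order estimate repeatedly between $B_{c_{i+1}R}^{\theta_{i+1}}$ and $B_{c_iR}^{\theta_i}$ for each component of the multi-index $\beta$. Each application contributes a factor bounded by $Cp^2\bigl[((1-c)R)^{-2}+(\theta'-\theta)^{-2}\bigr]$, and the data terms accumulate exactly as in the proof of Corollary~\ref{cor:CaccHighInt}.

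The main obstacle is that a naive iteration would seem to produce a final factor involving $(\theta'-\theta)^{-2p}$, which is incompatible with the stated $R^{-2p}$ dependence. The resolution is that $B_R\subset\Omega$ forces $R\le R_0:=\operatorname{diam}\Omega$, so that $(\theta'-\theta)^{-2}\le (R_0/(\theta'-\theta))^2R^{-2}$; the per-step factor is therefore dominated by $(\widetilde\gamma\,p/R)^2$ for a constant $\widetilde\gamma$ depending only on $c$, $\theta$, $\theta'$, and $\Omega$. Taking the $p$-th power and combining with the data-term bookkeeping from Corollary~\ref{cor:CaccHighInt} yields the desired estimate with $\gamma=\gamma(\alpha,\Omega,H,\theta,\theta',c)$.
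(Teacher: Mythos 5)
Your proposal is correct and follows essentially the same route as the paper: both use a product cut-off $\zeta(x)\chi(y)$ to obtain a $y$-localized single-order Caccioppoli bound with the combined factor $((1-c)R)^{-2}+(\theta'-\theta)^{-2}$, then iterate over the nested slabs $B_{c_iR}^{\theta_i}$ with linearly interpolated radii and heights, and finally invoke $R\le\operatorname{diam}\Omega$ to absorb the $(\theta'-\theta)^{-2}$ contribution into $CR^{-2}$ so that the per-step factor becomes $(\widetilde\gamma p/R)^2$.
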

\begin{proof}
The proof is very similar to the proof of Corollary~\ref{cor:CaccHighInt}, which iterates Lemma~\ref{lem:CaccType2D}.
In fact, an (also in $y$) localized version of Lemma~\ref{lem:CaccType2D} can be obtained by replacing the cut-off function 
$\zeta = \zeta(x)$ in   Lemma~\ref{lem:CaccType2D} by a cut-off function with product structure 
\begin{align*}
\zeta(x,y) = \zeta_x(x) \zeta_y(y), \qquad \zeta_x \in C_0^\infty(B_R), \quad \zeta_y \in C_0^\infty(-\theta^\prime,\theta^\prime).
\end{align*}
Here, $\zeta_x$ is the cut-off function as stated in Lemma~\ref{lem:CaccType2D} 
and $\zeta_y$ satisfies $\zeta_y \equiv 1$ on $(-\theta ,\theta)$ as well as 
$\|\partial_y^j \zeta_y\|_{L^\infty(-\theta^\prime,\theta^\prime)} \leq C_\zeta {(\theta^\prime-\theta )}^{-j}$ for $j\in\{0,1\}$ 
with a constant $C_{\zeta}$ independent of $R$, $\theta$, $\theta'$. Hence $\|\nabla \zeta\|_{L^\infty(\R^d \times \R_+)} \lesssim ((1-c) R)^{-1} + (\theta'-\theta)^{-1}$. 
Then, tracking the arguments in the proof Lemma~\ref{lem:CaccType2D} leads to a variant
of estimate (\ref{eq:CaccType2D}) in which $B_{cR}^{+}$ is replace by $B^\theta_{cR}$, the set 
$B^+_R$ is replaced by $B_{R}^{\theta^\prime}$, and the factor $((1-c)R)^{-2}$ is replaced by $((1-c)R)^{-2} + (\theta'-\theta)^{-2}$. 
The statement of the present corollary
is then obtained by an iteration argument similar to that in Cor.~\ref{cor:CaccHighInt} 
using the nested sets $B^{\theta_i}_{c_i R}$ where $\theta_i = \theta + (i-1)\frac{\theta^\prime-\theta}{p}$. As $R \leq \operatorname{diam} \Omega$, 
one has $(\theta_{i+1}-\theta_i)^{-2} + (c_{i+1} R - c_{i} R)^{-2} \leq C p^2 R^{-2}/(1-c)^2$ for a $C > 0$ depending only on $\Omega$, $\theta$, $\theta'$. 
\end{proof}

\section{Local tangential regularity for the extension problem in 2d} 
\label{sec:LocTgReg}

Lemma~\ref{lem:regularity2D} provides global regularity for the solution $U$ of (\ref{eq:extension2D}).
In this section, we derive a localized version of Lemma~\ref{lem:regularity2D} for tangential derivatives of $U$, where we 
solely consider the case $d=2$. 

Lemma~\ref{lem:CaccType2D} is formulated as an interior regularity estimate as 
the balls are assumed to satisfy $B_R(x_0) \subset \Omega$. Since $u = 0$ on $\Omega^c$ (i.e., $u$
satisfies ``homogeneous boundary conditions''), one obtains estimates near $\partial\Omega$ for derivatives in the direction of an edge. 

\begin{lemma}[Boundary Caccioppoli inequality]
\label{lem:CaccType2D-bdy} 
Let $\mathbf{e} \subset \partial\Omega$ be an edge of the polygon $\Omega$. Let
$B_R\coloneqq B_R(x_0)$ be an open ball with radius $R>0$ and center $x_0 \in \mathbf{e}$ such that 
$B_R(x_0) \cap \Omega$ is a half-ball, and let $B_{cR}$ be the concentric scaled ball of radius $cR$ with $c \in (0,1)$.
Let $\zeta \in C^\infty_0({B_R})$ be a cut-off function with 
$0 \leq \zeta \leq 1$ and $\zeta \equiv 1$ on ${B_{cR}}$ as well as $\|\nabla \zeta\|_{L^\infty(B_R)} \leq C_\zeta ((1-c)R)^{-1}$ 
for some $C_\zeta > 0$ independent of $c$, $R$. 
Let $U$ satisfy \eqref{eq:extension2D}
for given data $f$ and $F$ with $\supp F \subset \R^d \times [0,H]$.

Then, there exists a constant $C > 0$ (depending only on $\alpha$, $\Omega$, $C_\zeta$) such that
\begin{align}
 \norm{D_{x_\parallel}\nabla U}^{2}_{L^2_\alpha(B_{cR}^+)} \leq C \left( ((1-c)R)^{-2} \norm{\nabla U}^{2}_{L^2_\alpha(B_R^+)}
 + \norm{\zeta D_{x_\parallel} f}^{2}_{H^{-s}(\Omega)} + \norm{F}^{2}_{L^2_{-\alpha}(B_R^+)}\right). 
\end{align}
Furthermore, $\|\zeta D_{x_\parallel} f\|_{H^{-s}(\Omega)} \leq C_{\rm loc} \|D_{x_\parallel} f\|_{L^2(B_R \cap \Omega)}$ 
for some $C_{\rm loc} > 0$ independent of $R$ and $c$ (cf.\ Lemma~\ref{lemma:localization-fractional-norms}).
\end{lemma}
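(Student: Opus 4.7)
The plan is to mirror the argument of Lemma~\ref{lem:CaccType2D}, replacing the Cartesian difference quotient $D_{x_i}^\tau$ with the tangential difference quotient $D_{x_\parallel}^\tau w(x) \coloneqq (w(x + \tau \mathbf{e}_\parallel) - w(x))/\tau$ for $\tau \in \R \setminus \{0\}$ sufficiently small. The test function of choice is $V = D_{x_\parallel}^{-\tau}(\zeta^2 D_{x_\parallel}^\tau U)$, where $\zeta$ is extended constantly in $y$ to $\R^d \times \Rpos$.

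The main (and essentially only) obstacle compared with the interior proof is admissibility: one must verify that $V \in {\operatorname{BL}}^1_{\alpha,0,\Omega}$, i.e., that $\operatorname{tr} V \equiv 0$ on $\Omega^c$. This is exactly where the geometric hypothesis that $B_R \cap \Omega$ is a half-ball comes in. The flat side of this half-ball lies in the straight edge $\mathbf{e}$, and since $\mathbf{e}_\parallel$ is tangent to $\mathbf{e}$, the tangential shifts $x \mapsto x \pm \tau \mathbf{e}_\parallel$ preserve the local half-plane decomposition for $|\tau|$ small. Consequently, if $x \in \Omega^c$ then both $x \pm \tau \mathbf{e}_\parallel$ lie in $\Omega^c$ as well, and expanding
\[
  \operatorname{tr} V(x) = \tau^{-2}\bigl[\zeta^2(x)(u(x+\tau\mathbf{e}_\parallel) - u(x)) - \zeta^2(x-\tau\mathbf{e}_\parallel)(u(x) - u(x-\tau\mathbf{e}_\parallel))\bigr]
\]
together with $u \equiv 0$ on $\Omega^c$ gives $\operatorname{tr} V(x) = 0$ for such $x$. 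The support of $\operatorname{tr} V$ is therefore contained in $\overline{B_R \cap \Omega} \subset \overline{\Omega}$, as required.

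Once admissibility is secured, the rest of the proof is a direct transcription of the interior argument. Inserting $V$ into the weak formulation \eqref{eq:weak-form}, performing the standard summation-by-parts step for difference quotients to shift one $D_{x_\parallel}^\tau$ from $U$ onto $f$ and $F$, and collecting terms yields $\norm{\zeta D_{x_\parallel}^\tau \nabla U}^2_{L^2_\alpha(B_R^+)}$ on the left-hand side and three cross-terms on the right-hand side, handled by Young's inequality, the uniform bound $\norm{D_{x_\parallel}^\tau w}_{L^2_\alpha} \lesssim \norm{\partial_{x_\parallel} w}_{L^2_\alpha}$ (cf.\ \cite[Sec.~6.3]{evans98}), the trace inequality \eqref{eq:L3.8-KarMel19}, and the Poincar\'e-type estimate \eqref{eq:lemma:properties-of-H1alpha}. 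Absorbing the highest-order term into the left-hand side and passing to the limit $\tau \to 0$ produces the stated Caccioppoli inequality. The auxiliary bound $\norm{\zeta D_{x_\parallel} f}_{H^{-s}(\Omega)} \leq C_{\rm loc}\norm{D_{x_\parallel} f}_{L^2(B_R \cap \Omega)}$ then follows immediately from Lemma~\ref{lemma:localization-fractional-norms}.
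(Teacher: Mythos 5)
Your proposal follows the same route the paper does: the paper's proof consists of exactly two sentences saying that the argument of Lemma~\ref{lem:CaccType2D} carries over verbatim once one observes that the test function $V = D_{x_\parallel}^{-\tau}(\zeta^2 D_{x_\parallel}^\tau U)$ built from the \emph{tangential} difference quotient is admissible. You correctly identify admissibility as the only genuinely new point and supply the geometric justification: because $B_R(x_0)\cap\Omega$ is a half-ball resting flat on $\mathbf{e}$ and $\mathbf{e}_\parallel$ is parallel to $\mathbf{e}$, tangential translates of points near $\Omega^c$ stay in $\Omega^c$, so $u\equiv 0$ on $\Omega^c$ forces $\operatorname{tr} V$ to vanish there. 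One small imprecision: the support of $\operatorname{tr} V$ is not contained in $\overline{B_R\cap\Omega}$ as you assert — the difference quotient $D_{x_\parallel}^{-\tau}$ can push the support slightly past $B_R$, into $(B_R+\tau\mathbf{e}_\parallel)$ — but all that is actually needed is $\operatorname{tr} V\equiv 0$ on $\Omega^c$, equivalently $\supp\operatorname{tr} V\subset\overline{\Omega}$, which is exactly what your vanishing argument establishes. With that caveat, the rest of the transcription (summation by parts for difference quotients, Young's inequality, the trace bound \eqref{eq:L3.8-KarMel19}, Poincar\'e, absorption, $\tau\to 0$) is exactly the paper's interior argument, and the auxiliary $H^{-s}$ bound via Lemma~\ref{lemma:localization-fractional-norms} is also what the paper invokes.
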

\begin{proof}
The proof is almost verbatim the same as that of Lemma~\ref{lem:CaccType2D}. The key observation is that $
V = D^{-\tau}_{x_\parallel} (\zeta^2 D^\tau_{x_\parallel} U)$ with the difference quotient 
$$
D^\tau_{x_\parallel} w(x)\coloneqq  \frac{w(x + \tau \mathbf{e}_\parallel) - w(x)}{\tau}
$$
is an admissible test function. 
\end{proof}

Iterating the boundary Caccioppoli equation provides an estimate for higher order tangential derivatives.

\begin{corollary}[High order boundary Caccioppoli inequality]
  \label{cor:CaccHighBound}
Let $\mathbf{e} \subset \partial\Omega$ be an edge of $\Omega$. 
Let $B_R\coloneqq B_R(x_0)$ be an open ball with radius $R>0$ and center $x_0 \in \mathbf{e}$ such that 
$B_R(x_0) \cap \Omega$ is a half-ball, and let $B_{cR}$ be the concentric scaled ball of radius $cR$ with $c \in (0,1)$.
Let $U$ satisfy \eqref{eq:extension2D}
for given data $f$ and $F$ with $\supp F \subset \R^d \times [0,H]$.

Then, there exists a constant $\gamma>0$ (depending only on $\alpha$, $\Omega$ and $c$, but independent of the choice of $R>0$) 
such that for every $p \in \N_{0}$ there holds
\begin{align}
  \label{eq:CaccHighBound}
\|D_{x_\parallel}^p\nabla U\|^2_{L^2_\alpha(B_{cR}^+)} & \leq 
  (\gamma p)^{2p} R^{-2p} \|\nabla U\|^2_{L^2_\alpha(B_R^+)}  \\
\nonumber 
  & \qquad \mbox{} + \sum_{j=1}^p (\gamma p)^{2(p-j)} R^{2(j-p)}\left(\|D^j_{x_\parallel} f\|^2_{L^2(B_R)} 
  + \|D^{j-1}_{x_\parallel} F\|^2_{L^2_{-\alpha}(B_{R}^+)}\right).
\end{align}
\end{corollary}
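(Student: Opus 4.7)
The plan is to iterate the boundary Caccioppoli inequality of Lemma~\ref{lem:CaccType2D-bdy}, in direct parallel with how Corollary~\ref{cor:CaccHighInt} iterates Lemma~\ref{lem:CaccType2D}. I would dispense with the case $p=0$ by noting that the empty sum vanishes and $0^0=1$, leaving the trivial identity $\|\nabla U\|^2_{L^2_\alpha(B^+_{cR})}\le\|\nabla U\|^2_{L^2_\alpha(B^+_R)}$. For $p\ge 1$, I would introduce the nested radii $c_i \coloneqq c + (i-1)(1-c)/p$ for $i=1,\dots,p+1$, producing $B_{cR}=B_{c_1 R}\subset B_{c_2 R}\subset\cdots\subset B_{c_{p+1}R}=B_R$ with uniform annular width $(1-c)R/p$.

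Before iterating, I would verify that if $U$ solves~\eqref{eq:extension2D} with data $(F,f)$, then $D_{x_\parallel}^k U$ solves~\eqref{eq:extension2D} with data $(D_{x_\parallel}^k F, D_{x_\parallel}^k f)$ in $B_R^+$. Two observations enter here: the $y$-weighted operator $\operatorname{div}(y^\alpha \nabla\cdot)$ has coefficients depending only on $y$, so the tangential derivative $D_{x_\parallel}$ commutes with \eqref{eq:extension2D-a}--\eqref{eq:extension2D-b}; and since $D_{x_\parallel}$ is tangential to the straight edge $\mathbf{e}$, differentiating the zero extension of $\operatorname{tr} U$ preserves the zero-trace condition in $B_R\cap\Omega^c$. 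I would then apply Lemma~\ref{lem:CaccType2D-bdy} to $D_{x_\parallel}^{p-j} U$ with cut-off between $B_{c_{p-j+1}R}$ and $B_{c_{p-j+2}R}$ and invoke the $L^2$-localization of $\|\cdot\|_{H^{-s}}$ stated in that lemma to obtain
\begin{equation*}
\|D_{x_\parallel}^{p-j+1}\nabla U\|^2_{L^2_\alpha(B_{c_{p-j+1}R}^+)}
\lesssim \tfrac{p^2}{((1-c)R)^2} \|D_{x_\parallel}^{p-j}\nabla U\|^2_{L^2_\alpha(B_{c_{p-j+2}R}^+)}
+ \|D_{x_\parallel}^{p-j+1} f\|^2_{L^2(B_R)}
+ \|D_{x_\parallel}^{p-j} F\|^2_{L^2_{-\alpha}(B_R^+)}.
\end{equation*}

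Iterating this inequality $p$ times, the prefactor $p^2/((1-c)R)^2$ telescopes into $(C_{\rm int}p/((1-c)R))^{2(p-j)}$ multiplying each data term of order $j$, which reproduces exactly the sum structure of~\eqref{eq:CaccHighBound}; taking $\gamma$ to be a suitable multiple of $C_{\rm int}\max(1,C_{\rm loc})/(1-c)$ then closes the estimate. The only delicate point, rather than a genuine obstacle, is that the admissibility of the tangential difference-quotient test function used inside Lemma~\ref{lem:CaccType2D-bdy} and the commutation of $D_{x_\parallel}$ with the zero boundary condition both hinge on $B_R(x_0)\cap\Omega$ being a half-ball; this is built into the hypothesis of the corollary, so no additional geometric work is needed, and the bookkeeping of constants through the $p$-fold iteration is mechanical.
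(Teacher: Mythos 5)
Your proposal is correct and matches the paper's argument: the paper simply cites the iteration of Lemma~\ref{lem:CaccType2D-bdy} over the nested radii $c_i = c + (i-1)(1-c)/p$, exactly mirroring how Corollary~\ref{cor:CaccHighInt} iterates Lemma~\ref{lem:CaccType2D}, and you reproduce that scheme faithfully (including the $p=0$ convention and the choice $\gamma \sim C_{\mathrm{int}}\max(1,C_{\mathrm{loc}})/(1-c)$). Your added observations that $D_{x_\parallel}$ commutes with the $y$-dependent operator and preserves the zero-trace condition on $\Omega^c$ along the straight edge are implicit in the paper and worth having spelled out; only the indexing in your displayed recursion is slightly off (for the first step one applies the lemma to $D_{x_\parallel}^{p-1}U$ on the pair $(B_{c_1 R}, B_{c_2 R})$, not $(B_{c_p R}, B_{c_{p+1}R})$), but that is a cosmetic relabelling.
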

\begin{proof}
  The statement follows from Lemma \ref{lem:CaccType2D-bdy} in the same way as
  Corollary~\ref{cor:CaccHighInt} follows from Lemma~\ref{lem:CaccType2D}.
\end{proof}

The term $\norm{\nabla U}_{L^2_{\alpha}(B_R^+)}$ in (\ref{eq:CaccHighBound}) 
is actually small for $R\rightarrow 0$ in the presence of regularity of $U$, which was asserted in 
Lemma~\ref{lem:regularity2D}; this is quantified in the following lemma. 
\begin{lemma}\label{lem:estH12D}
Let $ S_{R} \coloneqq  \{x \in \Omega \;\colon \; r_{\partial\Omega}(x) < R\}$ be the tubular neighborhood of 
$\partial\Omega$ of width $R>0$.
Then, for $t \in [0,1/2)$, there exists $\Creg > 0$ depending only on $t$ and $\Omega$ such that the solution $U$ of 
(\ref{eq:minimization})
satisfies 
\begin{align}
\label{eq:lem:estH12D-5}
R^{-2t} \|\nabla U\|^2_{L^2_{\alpha}(S^+_R)} 
\leq 
\|r^{-t}_{\partial\Omega} \nabla U \|^2_{L^2_{\alpha}(\Omega^+)} 
\leq 
C_{\rm reg} C_t N^2(U,F,f)
\end{align}
with the constant $C_t>0$ from Lemma~\ref{lem:regularity2D} and $N^2(U,F,f)$ given by (\ref{eq:CUFf}). 
\end{lemma}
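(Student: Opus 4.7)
The plan is to split \eqref{eq:lem:estH12D-5} into two independent ingredients. The first inequality is a pointwise weight comparison: on $S_R$ one has $r_{\partial\Omega}(x) < R$, hence $R^{-2t} \leq r_{\partial\Omega}(x)^{-2t}$, and multiplying by $y^\alpha \abs{\nabla U(x,y)}^2 \geq 0$ and integrating over $S_R^+ \subset \Omega^+$ gives the claim immediately.

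For the second (and main) inequality, the strategy is to combine a fiberwise Hardy-type inequality in the $x$-variable with the regularity shift provided by Lemma~\ref{lem:regularity2D}. The key analytic ingredient is the classical $H^t$-Hardy inequality on bounded Lipschitz domains, valid precisely in the sub-half-integer range $t \in [0,1/2)$ without any boundary vanishing assumption on $v$: there exists a constant $C_H>0$ depending only on $t$ and $\Omega$ such that
\[
\|r_{\partial\Omega}^{-t} v\|_{L^2(\Omega)} \leq C_H \|v\|_{H^t(\Omega)} \qquad \forall v \in H^t(\Omega);
\]
this is a consequence of the equivalence of $\norm{\cdot}_{\widetilde{H}^t(\Omega)}$ and $\norm{\cdot}_{H^t(\Omega)}$ on $H^t(\Omega)$ for $t<1/2$ recorded in Section~\ref{sec:Nota}, see also \cite[Sec.~{1.4.4}]{Grisvard}. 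Note that $t \in [0,1/2)$ is exactly the range admitted in Lemma~\ref{lem:regularity2D}.

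I would then apply this bound componentwise to $\nabla U(\cdot,y)$ for a.e.\ $y>0$, using $\Omega \subset B_{\widetilde R}$ to pass to $H^t(B_{\widetilde R})$. Multiplying by $y^\alpha$, integrating over $\Rpos$, and invoking Lemma~\ref{lem:regularity2D} yields
\[
\|r_{\partial\Omega}^{-t} \nabla U\|^2_{L^2_\alpha(\Omega^+)}
\leq C_H^2 \int_{\Rpos} y^\alpha \|\nabla U(\cdot,y)\|^2_{H^t(B_{\widetilde R})} \, dy
\leq C_H^2 \, C_t \, N^2(U,F,f),
\]
which is the target estimate with $\Creg = C_H^2$. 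I expect no substantial obstacle: the only delicate point is the restriction $t<1/2$, which is inherent to the Hardy inequality and happens to match the range already imposed by Lemma~\ref{lem:regularity2D}; all remaining steps are direct assemblies of results already established.
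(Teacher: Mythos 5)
Your proof is correct and follows essentially the same route as the paper: the first inequality is the same trivial pointwise comparison $R^{-2t}\le r_{\partial\Omega}^{-2t}$ on $S_R$, and the second is obtained by applying a weighted $L^2\hookleftarrow H^t$ embedding fiberwise to $\nabla U(\cdot,y)$ and invoking Lemma~\ref{lem:regularity2D}. The only cosmetic difference is that you derive the Hardy-type bound $\|r_{\partial\Omega}^{-t}v\|_{L^2(\Omega)}\lesssim\|v\|_{H^t(\Omega)}$ from the norm equivalence of $\widetilde H^t$ and $H^t$ for $t<1/2$, whereas the paper cites \cite[Thm.~{1.4.4.3}]{Grisvard} for it directly; these are equivalent justifications drawn from the same source.
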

\begin{proof}
The first estimate in (\ref{eq:lem:estH12D-5}) is trivial. For the second bound, we start by noting that
the shift result Lemma~\ref{lem:regularity2D} gives the global regularity
\begin{align}
\label{eq:lem:estH12D-10}
\int_{\Rpos} y^\alpha \norm{\nabla U(\cdot, y)}_{H^t(\Omega)}^2 dy \leq C_t N^2(U,F,f). 
\end{align}
For $t \in [0,1/2)$ and any $v \in H^t(\Omega)$, we have by, e.g., \cite[Thm.~{1.4.4.3}]{Grisvard} the embedding result 
$\|r^{-t}_{\partial\Omega} v\|_{L^2(\Omega)} \leq C_{\mathrm{reg}}
\|v\|_{H^t(\Omega)}$. Applying this embedding to $\nabla U(\cdot,y)$,
multiplying by $y^\alpha$, and integrating in $y$ yields (\ref{eq:lem:estH12D-5}). 
\end{proof}

 The following lemma provides a shift theorem for localizations
of tangential derivatives of $U$.

\begin{lemma}[High order localized shift theorem]
\label{lem:localhighregularity2D}
Let $U$ be the solution of 
(\ref{eq:minimization}).  
Let $x_0 \in \mathbf{e}$ for an edge $\mathbf{e} \in {\mathcal E}$ of the polygon $\Omega$. Let $R\in (0,1/2]$, and assume that $B_R(x_0) \cap \Omega$ is a half-ball. 
Let $\eta_x \in C^\infty_0(B_R(x_0))$, $\eta_y \in C^\infty_0(-H,H)$ with $\eta_y \equiv 1$ on $(-H/2,H/2)$ and 
$\|\nabla^j \eta_x\|_{L^\infty(B_R(x_0))} \leq C_\eta R^{-j}$, $j \in \{0,1,2\}$ as well as 
$\|\partial_y^j \eta_y\|_{L^\infty(-H,H)} \leq C_\eta H^{-j}$, $j \in \{0,1,2\}$, with a constant $C_\eta > 0$ independent of $R$ and $H$. Let $\eta(x,y):= \eta_x(x) \eta_y(y)$.
Then,
for $t \in [0,1/2)$, there is $C > 0$ independent of $R$ and $x_0$
 such that, for each $p\in\N$, 
the function $\tUp\coloneqq \eta D^p_{x_\parallel} U$ satisfies 
 \begin{align}
\label{eq:lem:localhighregularity2D-5}
\int_{\Rpos} y^\alpha \norm{\nabla \tUp(\cdot,y)}_{H^t(\Omega)}^2 dy 
\leq C  R^{-2p-1+2t}(\gamma p)^{2p}(1+\gamma p) \tNp(F, f),
\end{align}
where $\gamma$ is the constant in Corollary~\ref{cor:CaccHighBound} and
\begin{align}
\label{eq:Ntilde}
\tNp(F,f)& \coloneqq 
 \norm{f}^2_{H^1(\Omega)} + \norm{F}^2_{L^2_{-\alpha}(\R^2\times(0,H))} \\
\nonumber 
& \qquad \mbox{} + \sum_{j=2}^{p+1} (\gamma p)^{-2j}\bigg(2^j\max_{\betam =j}\|\dbeta f\|^2_{L^2(\Omega)}
  + 2^{j-1}\max_{\betam=j-1}\|\dbeta F\|^2_{L^2_{-\alpha}(\R^2\times (0,H))}\bigg).
\end{align}
In addition,
\begin{equation}
\label{eq:lem:localhighregularity2D-10}
\int_{\Rpos} y^\alpha \|r^{-t}_{\partial\Omega} \nabla \widetilde U^{(p)}(\cdot,y)\|^2_{L^2(\Omega)}\,dy 
\leq C R^{-2p-1+2t} (\gamma p)^{2p}(1+\gamma p) \widetilde N^{(p)}(F,f).
\end{equation}%
\end{lemma}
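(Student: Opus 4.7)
The plan is to view $\widetilde U^{(p)} = \eta D^p_{x_\parallel} U$ as the weak solution of a modified extension problem of type \eqref{eq:extension2D} and then invoke the global shift theorem, Lemma~\ref{lem:regularity2D}, on this auxiliary problem. As a first step I would verify that $\widetilde U^{(p)} \in {\operatorname{BL}}^1_{\alpha,0,\Omega}$: since $B_R(x_0)\cap \Omega$ is a half-ball, $\Omega^c \cap B_R(x_0)$ is a half-plane whose normal is $\mathbf{e}_\perp$, so differentiation along $\mathbf{e}_\parallel$ preserves the Dirichlet condition $\operatorname{tr} U = 0$ on $\Omega^c$ inside $B_R(x_0)$; combined with $\operatorname{supp} \eta \subset B_R(x_0)\times (-H,H)$ this gives the required vanishing trace on $\Omega^c$. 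The operator $D^p_{x_\parallel}$ commutes with $-\operatorname{div}(y^\alpha\nabla\cdot)$ because $y^\alpha$ is independent of $x$, and $V := D^p_{x_\parallel} U \in H^1_\alpha(B_R^+)$ locally thanks to the high-order Caccioppoli estimate of Corollary~\ref{cor:CaccHighBound}.

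A direct product-rule computation then shows that $\widetilde U^{(p)} = \eta V$ is a weak solution of the extension problem \eqref{eq:extension2D} with modified data
\begin{equation*}
\widetilde F^{(p)} = \eta D^p_{x_\parallel} F - 2 y^\alpha \nabla \eta \cdot \nabla V - V \operatorname{div}(y^\alpha \nabla \eta), \qquad \widetilde f^{(p)} = \eta D^p_{x_\parallel} f.
\end{equation*}
The Neumann trace is \emph{not} perturbed because $\eta_y\equiv 1$ on $(-H/2,H/2)$ forces $\partial_y \eta = 0$ near $y=0$; this also eliminates the potentially singular $y^{\alpha-1}\partial_y \eta$ contribution to $\operatorname{div}(y^\alpha \nabla \eta)$. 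Applying Lemma~\ref{lem:regularity2D} to $\widetilde U^{(p)}$ yields
\begin{equation*}
\int_{\R_+} y^\alpha \|\nabla \widetilde U^{(p)}(\cdot,y)\|^2_{H^t(\Omega)}\,dy \;\leq\; C_t\, N^2\bigl(\widetilde U^{(p)},\widetilde F^{(p)},\widetilde f^{(p)}\bigr),
\end{equation*}
with $N^2$ as in \eqref{eq:CUFf}; the remaining work is to bound this right-hand side.

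To estimate the right-hand side I would (i) bound $\|\widetilde f^{(p)}\|_{H^{1-s}(\Omega)}$ via the embedding $H^1\subset H^{1-s}$ and the product rule, giving control by $R^{-1}\max_{|\beta|=p}\|\partial^\beta f\|_{L^2(B_R)} + \max_{|\beta|=p+1}\|\partial^\beta f\|_{L^2(B_R)}$; (ii) split $\|\widetilde F^{(p)}\|_{L^2_{-\alpha}}$ into the main term $\|\eta D^p_{x_\parallel} F\|_{L^2_{-\alpha}(B_R^+)}$ and the commutator terms, controlled respectively by $R^{-1}\|D^p_{x_\parallel}\nabla U\|_{L^2_\alpha(B_R^+)}$ and $R^{-2}\|D^p_{x_\parallel} U\|_{L^2_\alpha(B_R^+)}$ (plus $H^{-2}$ contributions from $\partial_y^2 \eta_y$, which are harmless because $y$ is bounded on $\operatorname{supp}\partial_y \eta_y$); and (iii) apply Corollary~\ref{cor:CaccHighBound} at scale $R$ to replace $\|D^p_{x_\parallel}\nabla U\|^2_{L^2_\alpha(B_R^+)}$ by $(\gamma p)^{2p}R^{-2p}\|\nabla U\|^2_{L^2_\alpha(\Omega^+)}$ plus the Caccioppoli data sum. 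Using the \emph{a~priori} bound \eqref{eq:energy-estimate} for $\|\nabla U\|_{L^2_\alpha}$ and the Poincar\'e estimate \eqref{eq:lemma:properties-of-H1alpha} to reduce $\|D^p_{x_\parallel} U\|_{L^2_\alpha(B_R^+)}$ to $\|D^p_{x_\parallel}\nabla U\|_{L^2_\alpha(B_R^+)}$, the various contributions reorganize into the form $\widetilde N^{(p)}(F,f)$ of \eqref{eq:Ntilde}; the prefactor $R^{-2p-1+2t}(\gamma p)^{2p}(1+\gamma p)$ arises by combining the $(\gamma p)^{2p}R^{-2p}$ factor of Caccioppoli with the $R^{-1+2t}$ scaling of the auxiliary shift problem at the length scale $R$.

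The second estimate \eqref{eq:lem:localhighregularity2D-10} then follows from \eqref{eq:lem:localhighregularity2D-5} by applying pointwise in $y$ the Hardy-type embedding $\|r^{-t}_{\partial\Omega} v\|_{L^2(\Omega)} \lesssim \|v\|_{H^t(\Omega)}$ valid for $t \in [0,1/2)$ (as in the proof of Lemma~\ref{lem:estH12D}) to $\nabla\widetilde U^{(p)}(\cdot,y)$ and integrating against $y^\alpha$. The main technical obstacle I anticipate is the bookkeeping of constants: one must align the Caccioppoli sum $\sum_{j=1}^{p}(\gamma p)^{2(p-j)}R^{2(j-p)}(\cdots)$ with the additional $R^{-1}$ (from $\|\eta D^p_{x_\parallel} f\|_{H^1}$) and $R^{-1},R^{-2}$ (from the commutators in $\widetilde F^{(p)}$) so that after factoring out $R^{-2p-1+2t}(\gamma p)^{2p}(1+\gamma p)$ the residual sum in \eqref{eq:Ntilde} runs over $j=2,\dots,p+1$ with precisely the weights $(\gamma p)^{-2j}$, and the bound is uniform in $R\in(0,1/2]$ and $x_0\in\mathbf{e}$.
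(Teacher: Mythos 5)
Your proposal reproduces the paper's overall architecture: localizing the equation to identify the modified data $\widetilde F^{(p)},\widetilde f^{(p)}$ satisfied by $\widetilde U^{(p)}=\eta D_{x_\parallel}^p U$, checking that the Dirichlet condition on $\Omega^c$ and the Neumann trace survive the cut-off, invoking the global shift theorem (Lemma~\ref{lem:regularity2D}) on the localized function, and reducing the estimation of $N^2(\widetilde U^{(p)},\widetilde F^{(p)},\widetilde f^{(p)})$ to the Caccioppoli estimate of Corollary~\ref{cor:CaccHighBound}; the passage to \eqref{eq:lem:localhighregularity2D-10} via the weighted Sobolev embedding is also correct. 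The genuine gap is in how you obtain the factor $R^{2t}$. You propose to bound $\|\nabla U\|_{L^2_\alpha(B_{2R}^+)}$, which appears after Caccioppoli, by the global energy norm via \eqref{eq:energy-estimate} and to attribute the remaining $R$-power to ``the $R^{-1+2t}$ scaling of the auxiliary shift problem at length scale $R$.'' But Lemma~\ref{lem:regularity2D} is stated on the fixed domain $\Omega$ with an $R$-independent constant $C_t$; there is no built-in scale-dependent gain. The $R^{2t}$ factor must instead be extracted by noting that $B_{2R}(x_0)\cap\Omega$ sits inside the $2R$-tubular neighborhood $S_{2R}$ of $\partial\Omega$ and invoking the weighted local estimate of Lemma~\ref{lem:estH12D}, which yields $\|\nabla U\|^2_{L^2_\alpha(B_{2R}^+)}\leq\|\nabla U\|^2_{L^2_\alpha(S_{2R}^+)}\lesssim R^{2t}N^2(U,F,f)$. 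Without this, your exponent degrades to $-2p-1$, and in the downstream application of the lemma in Lemma~\ref{lemma:regularity-omega_ce} (with $t=1/2-\varepsilon/2$, summing over a covering with radii $R_i\to0$) the resulting sum $\sum_i R_i^{2\varepsilon-1}$ diverges. The sharpened power $-2p-1+2t$ is therefore essential, not cosmetic.

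A secondary point: to control $\|D^p_{x_\parallel}U\|_{L^2_\alpha(B_R^+)}$ by a gradient norm you invoke the Poincar\'e estimate \eqref{eq:lemma:properties-of-H1alpha}, but that estimate is global (posed on $\R^d\times(0,H)$ for functions in $\operatorname{BL}^1_{\alpha,0,\Omega}$) and does not localize to $B_R^+$ as stated; moreover $D^p_{x_\parallel}U$ need not lie in $\operatorname{BL}^1_{\alpha,0,\Omega}$ globally. The paper avoids this altogether with the elementary pointwise bound $|D^p_{x_\parallel}U|=|D_{x_\parallel}D^{p-1}_{x_\parallel}U|\leq|\nabla_x D^{p-1}_{x_\parallel}U|$, followed by the Caccioppoli estimate at order $p-1$; a local Poincar\'e on $B_R^+$ (exploiting the zero trace on $B_R\cap\Omega^c$) would also work, but is a different inequality from the one you cite.
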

\begin{proof}
We abbreviate $\Uparp\coloneqq  \Dpar^p U$, $\tUp(x,y)\coloneqq  \eta(x,y) \Dpar^p U(x,y)$, 
$\Fparp = \Dpar^p F$, and $\fparp  = \Dpar^p f$. 
Throughout the proof we will use the fact that, 
for all $j\in \N$ and all sufficiently smooth functions $v$, 
we have 
\begin{equation*}
  |\Dpar^j v | \leq 2^{j/2}\max_{\betam=j} |\dbeta v|.
\end{equation*}
We also note that the assumptions on $\eta(x,y) = \eta_x(x) \eta_y(y)$ imply the existence of $\tilde C_\eta > 0$ (which absorbes the dependence
on $H$ that we do not further track) such that 
\begin{equation}
\|\nabla^j_{x} \partial_y^{j'} \eta\|_{L^\infty(\R^2 \times \R)} \leq \tilde C_\eta R^{-j}, \qquad j \in \{0,1,2\}, j' \in \{0,1,2\}. 
\end{equation}
{\bf Step 1.} (Localization of the equation). 
Using that $U$ solves the extension problem (\ref{eq:extension2D}), we obtain that the function 
$\tUp = \eta \Uparp$ satisfies the equation
\begin{align*}
  &\operatorname*{div} (y^\alpha \nabla \tUp) =   y^\alpha \operatorname{div}_x (\nabla_x \tUp) + \partial_y(y^\alpha \partial_y\tUp) 
   \\&\qquad\qquad =  y^\alpha\left((\Delta_x \eta) \Uparp + 2 \nabla_x \eta \cdot \nabla_x \Uparp +\eta \Delta_x \Uparp\right) + \eta\partial_y(y^\alpha \partial_y\Uparp) 
+ \partial_y (y^\alpha \Uparp \partial_y \eta ) + y^\alpha \partial_y \Uparp \partial_y \eta 
   \\&\qquad\qquad = y^\alpha\left((\Delta_x \eta) \Uparp + 2 \nabla_x \eta \cdot \nabla_x \Uparp\right) 
+ \partial_y (y^\alpha \Uparp \partial_y \eta ) + y^\alpha \partial_y \Uparp \partial_y \eta 
+ \eta   \operatorname*{div} (y^\alpha \nabla \Uparp) 
   \\& \qquad\qquad = y^\alpha\left((\Delta_x \eta) \Uparp + 2 \nabla_x \eta \cdot \nabla_x \Uparp\right)
+ \partial_y (y^\alpha \Uparp \partial_y \eta ) + y^\alpha \partial_y \Uparp \partial_y \eta
+ \eta \Fparp \eqqcolon  \tFp
\end{align*}
as well as the boundary conditions 
\begin{align*}
\partial_{n_\alpha} \tUp(\cdot,0) & = \eta(\cdot,0) \Dpar^p f  \eqqcolon  \tfp   &&\mbox{ on $\Omega$}, \\
\operatorname{tr}\tUp & = 0  &&\mbox{ on $\Omega^c$.}
\end{align*}
By the support properties of the cut-off function $\eta$, we have $\operatorname{supp} \tFp \subset \overline{B_R}(x_0) \times [0,H] \subset \R^2 \times [0,H]$.
By Lemma~\ref{lem:regularity2D}, 
for all $t \in [0,1/2)$, there is a $C_t>0$ such that
\begin{equation}
  \label{eq:reg2D_in_highord}
  \int_{\Rpos}y^\alpha \|\nabla\tUp(\cdot, y)\|^2_{H^t(B_{\widetilde{R}})}dy \leq C_t N^2(\tUp, \tFp, \tfp),
\end{equation}
where $B_{\widetilde{R}}$ is a ball containing $\overline{\Omega}$.
By \eqref{eq:CUFf}, we have to estimate $N^2(\tUp, \tFp, \tfp)$, i.e., 
$\|\nabla \tUp\|_{L^2_{\alpha}(\R^2\times \Rpos)}$, $\|\tFp\|_{L^2_{-\alpha}(\R^2 \times (0,H))}$,  and 
$\|\tfp\|_{H^{1-s}(\Omega)}$. 
Let $\gamma$ be the constant introduced in Corollary~\ref{cor:CaccHighBound}.
We note that by (\ref{eq:CUFf-simplified}) there exists $C_N>0$ such that, for
all $p\in\N_{0}$,
\begin{equation}
\label{eq:Ntilde-10}
N^2(U,F,f)  \leq C_N \tNp(F,f).
\end{equation}
{\bf Step 2.} (Estimate of $\|\nabla \tUp\|_{L^2_{\alpha}(\R^2\times \Rpos)}$).
We write
\begin{align}\label{eq:lem:localregularity2D-00}
\nonumber 
\|\nabla \tUp\|^2_{L^2_{\alpha}(\R^2\times \Rpos)} &\leq 
2\|\nabla \eta\|^2_{L^\infty}  \|\nabla_x \Uparpmone\|^2_{L^2_{\alpha}(B^+_R)} + 2\|\eta\|^2_{L^\infty} \|\nabla \Uparp\|^2_{L^2_{\alpha}(B^+_{R})} 
  \\ 
&\leq 2\tilde C_{\eta}^2 \left( R^{-2}
\|\nabla\Uparpmone\|^2_{L^2_{\alpha}(B^+_{R})} + \|\nabla \Uparp\|^2_{L^2_{\alpha}(B^+_{R})}\right). 
\end{align}
We employ Corollary~\ref{cor:CaccHighBound} with a ball $B_{2R}$ and $c=1/2$ as well as Lemma~\ref{lem:estH12D} to obtain for $p \in \N_0$
\begin{align}\label{eq:lem:localregularity2D-10}
\nonumber 
 \|\nabla \Uparp\|^2_{L^2_{\alpha}(B^+_{R})} &\leq 
         (2R)^{-2p}(\gamma p)^{2p}\bigg(\|\nabla U\|^2_{L^2_{\alpha}(B^+_{2R})} + \sum_{j=1}^p (2R)^{2j} (\gamma p)^{-2j} \Big(\|D^j_{x_\parallel} f\|^2_{L^2(B_{2R})}
  + \|D^{j-1}_{x_\parallel} F\|^2_{L^2_{-\alpha}(B_{2R}^+)}\Big)\bigg) \\
  & \leq 
          (2R)^{-2p}(\gamma p)^{2p} \bigg( \|\nabla U\|^2_{L^2_{\alpha}(B^+_{2R})} \nonumber \\
          & \quad + (2R)^{2}\sum_{j=1}^p (2R)^{2(j-1)} (\gamma p)^{-2j}\Big(2^j\max_{\betam=j} \|\dbeta f\|^2_{L^2(B_{2R})}
  + 2^{j-1}\max_{\betam=j-1} \|\dbeta F\|^2_{L^2_{-\alpha}(B_{2R}^+)} \Big)\bigg) \nonumber \\
\overset{R\leq 1/2, \text{L.\ref{lem:estH12D}}}&{\leq}
                                      (2R)^{-2p}(\gamma p)^{2p} \Big(  \left(\Creg C_t R^{2t}+(2R)^2 2 \gamma^{-2} \right)N^2(U,F,f)  + (2R)^2\tNp(F, f) \Big) \nonumber
  \\  \overset{t <1/2, (\ref{eq:Ntilde-10})}&{\leq}   (2R)^{-2p}(\gamma p)^{2p}
\underbrace{ (\Creg C_t (1+8 \gamma^{-2}) C_N + 4)}_{=: \CregN} R^{2t} \tNp(F,f). 
\end{align} 
For $p \in \N$, we 
apply (\ref{eq:lem:localregularity2D-10}) to the $(p-1)^{th}$ derivative and exploit the structure of the expression 
$(\gamma (p-1))^{2p-2} \tNpmone(F,f)$ to get 
\begin{align}\label{eq:lem:localregularity2D-20}
\nonumber 
\|\nabla \Uparpmone\|^2_{L^2_{\alpha}(B^+_{R})} &\leq 
 (2R)^{-2(p-1)} \CregN R^{2t}
(\gamma (p-1))^{2(p-1)} 
\tNpmone(F,f) \\
& \leq  (2R)^{-2(p-1)}
\CregN R^{2t}  \max\{1,\gamma^{-2}\} 
(\gamma p)^{2p}
\tNp(F,f). 
\end{align}
Inserting \eqref{eq:lem:localregularity2D-10} and \eqref{eq:lem:localregularity2D-20} into \eqref{eq:lem:localregularity2D-00} provides the estimate
\begin{align*}
\|\nabla \tUp\|^2_{L^2_{\alpha}(\R^2\times \Rpos)} \leq C  R^{-2p+2t}(\gamma p)^{2p} \tNp(F,f)
\end{align*}
with a constant $C>0$ depending only on the constants $\Creg$, $C_t$, $\tilde C_{\eta}$, $C_N$, and $\gamma$.

{\bf Step 3.} (Estimate of $\|\tFp\|_{L^2_{-\alpha}(\R^2 \times \Rpos)}$).
We treat the five terms appearing in $\|\tFp\|_{L^2_{-\alpha}(\R^2 \times \Rpos)}$ separately. 
With \eqref{eq:lem:localregularity2D-10}, we obtain
\begin{align*}
\norm{ y^\alpha\nabla_x \eta \cdot \nabla_x \Uparp}^2_{L^2_{-\alpha}(\R^2 \times (0,H))} &= 
\norm{\nabla_x \eta \cdot \nabla_x \Uparp}^2_{L^2_{\alpha}(\R^2 \times \Rpos)} \leq 
C_\eta^2 \frac{1}{R^2} \norm{\nabla_x \Uparp}^2_{L^2_{\alpha}(B_{R}^+)} 
\\ 
\overset{ (\ref{eq:lem:localregularity2D-10})}&{\leq}
(2R)^{-2p}(\gamma p)^{2p}C_\eta^2
\CregN
R^{-2+2t} \tNp(F,f).
\end{align*}
Similarly, we get
\begin{align*}
\norm{y^\alpha(\Delta_x \eta) \Uparp}^2_{L^2_{-\alpha}(\R^2 \times (0,H))}
&=  \norm{(\Delta_x \eta) \Uparp}^2_{L^2_{\alpha}(B^+_R)} \leq
C_\eta^2 \frac{1}{R^4} \norm{\nabla \Uparpmone}^2_{L^2_{\alpha}(B_{R}^+)}
\\ 
  \overset{\text{\eqref{eq:lem:localregularity2D-20}}}&{\leq} 4 (2R)^{-2p}(\gamma p)^{2p} C_\eta^2 
\CregN
R^{-2+2t} \tNp(F,f).
\end{align*}
Next, we estimate 
\begin{align*}
  \|\eta \Fparp\|^2_{L^2_{-\alpha} (\R^2 \times (0,H))}
  \leq \|\Fparp\|^2_{L^2_{-\alpha} (B_R^+)} \leq 2^p \max_{\betam=p} \|\dbeta F\|_{L^2_{-\alpha} (B_R^+)}^2 \leq (\gamma p)^{2p+2} \tNp(F, f). 
\end{align*}
Finally, for the term 
$\partial_y (y^\alpha \Uparp \partial_y \eta ) + y^\alpha \partial_y \Uparp \partial_y \eta$, we observe that $\partial_y \eta$ vanishes near $y = 0$ so that the weight
$y^\alpha$ does not come into play as it can be bounded from above and below by positive constants depending only on $H$. We arrive at 
\begin{align*}
\norm{\partial_y (y^\alpha \Uparp \partial_y \eta ) + y^\alpha \partial_y \Uparp \partial_y \eta}_{L^2_{-\alpha}(\R^2 \times (0,H))} 
&\leq 
C \left( H^{-2} \|\Uparp\|_{L^2_\alpha(B_R \times (0,H))} + H^{-1}\|\nabla \Uparp\|_{L^2_\alpha(B^+_R)} \right) \\
&\stackrel{\eqref{eq:lem:localregularity2D-10},\eqref{eq:lem:localregularity2D-20}}{\leq } C_H (\gamma p)^{2p}R^{-2p+2t}\tNp(F,f),
\end{align*}
for suitable $C_H > 0$ depending on $H$. 

{\bf Step 4.} (Estimate of $\|\tfp\|_{H^{1-s}(\Omega)}$.)
Here, we use
Lemma~\ref{lemma:localization-fractional-norms} and $R<1/2$ together with $s<1$ to obtain 
\begin{align*}
\|\tfp\|_{H^{1-s}(\Omega)}^2 & 
\leq 2\Cloctwo^2 C_\eta^2 \left( 9R^{2s-2} \|\Dpar^p f\|^2_{L^2(\Omega)} + |\Dpar^p f|^2_{H^{1-s}(\Omega)} \right) 
  \\ &
  \leq C\Cloctwo^2C_\eta^2 R^{2s-2}\left(2^p \max_{\betam=p}\| \dbeta f\|^2_{L^2(\Omega)}  + 2^{p+1} \max_{\betam=p+1}\|\dbeta f\|^2_{L^2(\Omega)}  \right)
  \\ &
  \leq C \Cloctwo^2C_\eta^2 R^{2s-2}(\gamma p)^{2p}(1+(\gamma p)^2) \tNp(F, f) 
\end{align*}
with a constant $C>0$ depending only on $\Omega$ and $s$.

{\bf Step 5.} (Putting everything together.)
Combining the above estimates, we obtain that there exists a constant $C>0$ depending
only on $\Creg$, $C_t$, $\tilde C_\eta$, $C_N$, $\Cloctwo$, $H$, $\gamma$, $\Omega$, $s$ such that
\begin{align*}
  & N^2(\tUp,\tFp,\tfp)
  \\ & \qquad = 
\left(
\|\nabla \tUp\|^2_{L^2_\alpha(\R^2 \times \Rpos)} + 
\|\nabla \tUp\|_{L^2_\alpha(\R^2 \times \Rpos)} \|\tFp\|_{L^2_{-\alpha}(\R^2\times (0,H))} + 
       \|\nabla \tUp\|_{L^2_\alpha(\R^2 \times \Rpos)} \|\tfp\|_{H^{1-s}(\Omega)}   \right)
  \\ & \qquad
       \leq C \left[ R^{-2p+2t}(\gamma p)^{2p} + R^{-p+t}(\gamma p)^p R^{-p-1+t}(\gamma p)^{p}(1+\gamma p)  + R^{-p+t}(\gamma p)^p R^{s-1}(\gamma p)^{p}(1+\gamma p) \right]\tNp(F,f)
  \\ 
       \overset{R \leq 1, t<1/2}&{\qquad \leq} C R^{-2p-1+2t}(\gamma p)^{2p}(1+\gamma p)\tNp(F,f).
\end{align*}
Inserting this estimate in \eqref{eq:reg2D_in_highord} concludes the proof of (\ref{eq:lem:localhighregularity2D-5}). 

{\bf Step 6:} The estimate (\ref{eq:lem:localhighregularity2D-10}) 
follows from \cite[Thm.~{1.4.4.3}]{Grisvard},
which gives
\begin{equation*}
  \int_{\Rpos} y^\alpha \|r^{-t}_{\partial\Omega} \nabla \widetilde U^{(p)}(\cdot,y)\|^2_{L^2(\Omega)}\,dy 
\leq C 
\int_{\Rpos} y^\alpha \|\nabla \widetilde U^{(p)}(\cdot,y)\|^2_{H^t(\Omega)}\,dy,
\end{equation*}
and from \eqref{eq:lem:localhighregularity2D-5}. 
\end{proof}

\section{Weighted $H^p$-estimates in polygons}
\label{sec:WghHpPolygon}
In this section, 
we derive higher order weighted regularity results, 
at first for the extension problem and finally for the fractional PDE.
This is our main result, Theorem~\ref{thm:mainresult}.
%
\subsection{Coverings}
A main ingredient in our analysis are suitable localizations of \emph{vertex neighborhoods} $\omega_{\mathbf{v}}$ and 
\emph{vertex-edge neighborhoods} $\omega_{\mathbf{ve}}$ near a vertex $\mathbf{v}$ and of \emph{edge neighborhoods} $\omega_{\mathbf{e}}$ 
near an edge $\mathbf{e}$. This is achieved by covering such neighborhoods by balls or half-balls with the following two properties: 
a) their diameter is proportional to the distance to vertices or edges and b) scaled versions of these balls/half-balls satisfy a locally finite overlap property.

We start by recalling a lemma that follows from Besicovitch's Covering Theorem:
\begin{lemma}[{\cite[Lem.~{A.1}]{melenk-wohlmuth12}, \cite[Lem.~{A.1}]{horger-melenk-wohlmuth13}}]
\label{lemma:MW}
Let $\omega\subset {\mathbb R}^d$ be bounded, open and $M \subset \partial\omega$ be closed. Fix $c$, $\zeta \in (0,1)$ such that 
$1 -c (1+\zeta) \eqqcolon c_0 > 0$. For each $x \in \omega$, let $B_x\coloneqq  \overline{B}_{c \operatorname{dist}(x,M)}(x)$ be the closed ball of 
radius $c \operatorname{dist}(x,M)$ centered at $x$, and let
$\widehat{B}_x\coloneqq  \overline{B}_{(1+\zeta) c \operatorname{dist}(x,M)}(x)$ be the 
stretched closed ball of radius $(1+\zeta) c \operatorname{dist}(x,M)$ centered at $x$.  Then, there is a countable set 
$(x_i)_{i \in {\mathcal I}} \subset  \omega$ (for some suitable index set ${\mathcal I} \subset {\mathbb N}$) and a number $N \in {\mathbb N}$ 
depending solely on $d$, $c$, $\zeta$ with the following properties:
\begin{enumerate}
\item 
(covering property) $\bigcup_{i} B_{x_i} \supset \omega$. 
\item
(finite overlap) for $x \in {\mathbb R}^d$ it holds that $\operatorname{card}\{i\,|\, x \in \widehat{B}_{x_i} \} \leq N$. 
\end{enumerate}
\end{lemma}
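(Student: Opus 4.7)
The plan is to deduce this from the classical Besicovitch Covering Theorem via a dyadic localization in the distance to $M$. The condition $c_0 = 1-c(1+\zeta)>0$ is the key structural input: for any $y \in \widehat{B}_x$, the reverse and ordinary triangle inequalities applied to an $m \in M$ realizing $\operatorname{dist}(x,M)$ (respectively $\operatorname{dist}(y,M)$) yield
\begin{equation*}
c_0 \operatorname{dist}(x,M) \leq \operatorname{dist}(y,M) \leq (1+c(1+\zeta)) \operatorname{dist}(x,M),
\end{equation*}
so all distances to $M$ inside a stretched ball are comparable to $\operatorname{dist}(x,M)$ with constants depending only on $c$ and $\zeta$.

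Using this, I would partition $\omega$ into dyadic shells $A_k \coloneqq \{x \in \omega : 2^{-k-1} < \operatorname{dist}(x,M) \leq 2^{-k}\}$ for $k \in \Z$. On each $A_k$ the radii $c\operatorname{dist}(x,M)$ are mutually comparable, so the classical Besicovitch Covering Theorem applied to the family $\{B_x\}_{x \in A_k}$ extracts a countable subfamily $\{B_{x_i^{(k)}}\}_{i \in \mathcal{I}_k}$ covering $A_k$ with pointwise multiplicity bounded by a constant $N_d$ depending only on $d$. Taking the union over $k$ (of which only countably many are nonempty since $\omega$ is bounded) produces the desired $\{x_i\}_{i \in \mathcal{I}}$, and the covering property follows at once.

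The finite-overlap statement for the \emph{stretched} balls $\widehat{B}_{x_i}$ is the point requiring care. Given $x \in \R^d$, the bounded distortion estimate above forces $\operatorname{dist}(x,M)$ to be comparable to $2^{-k}$ whenever $x \in \widehat{B}_{x_i^{(k)}}$, so only $k$ in a finite window of length depending on $c$ and $\zeta$ can contribute. Within each such shell the enlarged balls $\widehat{B}_{x_i^{(k)}}$ still have comparable radii, and a second application of Besicovitch to the enlarged family gives overlap at most a further $d$-dependent constant. Combining these two finite counts yields the uniform bound $N = N(d, c, \zeta)$. The main obstacle is bookkeeping: one must ensure that no constant inherits any dependence on $\omega$, $M$, or on the individual dyadic scale, which is exactly guaranteed by the scale invariance of Besicovitch's theorem together with the bounded distortion afforded by $c_0 > 0$.
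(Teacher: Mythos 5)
The paper itself does not prove Lemma~\ref{lemma:MW}: it cites \cite[Lem.~A.1]{melenk-wohlmuth12} and \cite[Lem.~A.1]{horger-melenk-wohlmuth13} and only comments on two minor modifications (allowing $x\in\R^d$ rather than $x\in\omega$ in the overlap count, and non-degeneracy of the balls). Your derivation is therefore a genuine, self-contained replacement, and the Besicovitch-plus-dyadic-shell route you choose is indeed the standard one. The distortion estimate
\begin{equation*}
c_0 \operatorname{dist}(x,M) \leq \operatorname{dist}(y,M) \leq \bigl(1+c(1+\zeta)\bigr)\operatorname{dist}(x,M)\quad\text{for } y\in\widehat{B}_x
\end{equation*}
is correct and is exactly the mechanism by which $c_0>0$ enters, and the restriction to finitely many shell indices $k$ per point is deduced from it correctly.

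The one step that needs to be stated more carefully is the final phrase ``a second application of Besicovitch to the enlarged family gives overlap at most a further $d$-dependent constant.'' The Besicovitch theorem \emph{constructs} a subfamily with bounded overlap; it does not, as stated, bound the overlap of the family $\{\widehat{B}_{x_i}\}$ that you already fixed. What does the job is a volume-packing argument, which you have all the ingredients for. Fix $x\in\R^d$ and set $J=\{i: x\in\widehat{B}_{x_i}\}$. By your distortion estimate, for every $i\in J$ the quantity $\operatorname{dist}(x_i,M)$, and hence the radius $r_i$ of $B_{x_i}$, is comparable to $\operatorname{dist}(x,M)$ with constants depending only on $c,\zeta$. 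Consequently every $B_{x_i}$ with $i\in J$ is contained in a ball $B(x,C\operatorname{dist}(x,M))$ and has measure bounded below by a fixed multiple of $\operatorname{dist}(x,M)^d$. Since the $B_{x_i}$ (the unstretched ones) have pointwise overlap at most $N_d$ by Besicovitch, integrating $\sum_{i\in J}\mathbf{1}_{B_{x_i}}$ over $B(x,C\operatorname{dist}(x,M))$ gives $|J|\leq C(d,c,\zeta)\,N_d$. This is the correct final step. Incidentally, this observation also shows that the dyadic shells are not actually needed for the first Besicovitch application: since $\omega$ is bounded the radii $c\operatorname{dist}(\cdot,M)$ are bounded, so Besicovitch applies directly to the full family $\{B_x\}_{x\in\omega}$, and the comparability of the relevant radii within $J$ comes for free from the distortion estimate rather than from the shell structure. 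Either presentation is fine, but the phrase ``a second application of Besicovitch'' should be replaced by the packing count above.
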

\begin{proof}
The lemma is taken from \cite[Lem.~{A.1}]{melenk-wohlmuth12} except that there 
$x \in \omega$ in the condition of finite overlap is assumed. Inspection of the proof shows that this condition can be relaxed as given here. 
Note that the proof of \cite[Lem.~{A.1}]{melenk-wohlmuth12} required the balls $B_{x_i}$ to be non-degenerate, 
which is ensured in the present setting of $M \subset \partial\omega$.
\end{proof}
In the next lemma, we introduce a covering of $\omegac$, see Figure \ref{fig:covering-vertex}.

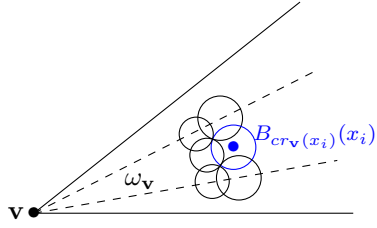
\begin{figure}
\begin{center} 
\begin{tikzpicture}[scale=0.7]
\draw (0,0) node {\textbullet} node[left]{$\mathbf{v}$}; 
  \draw[-] (0, -0) -- (6, 0);
  \draw[-] (0, -0) -- (5, 4);
  \draw[dashed] (0,-0) -- (5.7,1); 
  \draw[dashed] (0,-0) -- (5.3,2.7); 
\draw (2,0.6) node {$\omega_{\mathbf{v}}$};
\newcommand{\cirs}{0.42}
\draw[blue] (3.75,1.25) node {\textbullet};
\draw[blue] (3.95,1.45) node[right] {\footnotesize $B_{cr_\mathbf{v}(x_i)}(x_i)$}; 
\draw (3.5,3*3/5) circle (\cirs);
\draw[blue] (3.75,1.25) circle (\cirs);
\draw (3.85,3.35/5) circle (\cirs);
\newcommand{\cirss}{0.32}
\draw (3.05,1.5) circle (\cirss);
\draw (3.25,1.1) circle (\cirss);
\draw (3.35,3.0/5) circle (\cirss);
\end{tikzpicture}
\end{center}
\caption{\label{fig:covering-vertex} Covering of ``vertex cones'' such as
  $\omegac$ by union of balls $B_{cr_\mathbf{v}(x_i)}(x_i)$ with fixed $c \in (0,1)$. }
\end{figure}

\begin{lemma}[covering of $\omega_{\mathbf v}$]
\label{lemma:covering-omega_c}
Given ${\mathbf v} \in {\mathcal V}$ and $\omegaeps>0$, there are $0 < c < \widehat c < 1$ and points $(x_i)_{i \in {\mathbb N}}\subset \omega_{\mathbf v} = \omega^\omegaeps_{\mathbf v}$ such that 
the collections 
${\mathcal B}\coloneqq  \{B_i\coloneqq  B_{c \operatorname{dist}(x_i,{\mathbf v})}(x_i) \,|\, i \in {\mathbb N}\}$ 
and 
$\widehat {\mathcal B}\coloneqq  \{\widehat B_i\coloneqq  B_{\widehat c \operatorname{dist}(x_i,{\mathbf v})}(x_i) \,|\, i \in {\mathbb N}\}$ 
of (open) balls satisfy the following conditions: 
the balls from ${\mathcal B}$ cover $\omega_{\mathbf v}$; the balls from $\widehat{\mathcal B}$ satisfy
a finite overlap property with overlap constant $N$ depending only on the spatial dimension $d = 2$ and $c$, $\widehat c$; 
the balls from $\widehat{\mathcal B}$ are contained in $\Omega$.  
Furthermore, for every $\delta> 0$, there is $C_\delta> 0$ (depending additionally on
$\delta$) such that with the radii $R_i\coloneqq  \widehat c \operatorname{dist}(x_i,{\mathbf v})$ it holds that
\begin{equation}
\label{eq:lemma:covering-omega_c-10}
\sum_{i} R_i^\delta \leq C_\delta. 
\end{equation}
\end{lemma}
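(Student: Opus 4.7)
The plan is to apply the abstract Besicovitch-type covering result, Lemma~\ref{lemma:MW}, with the choices $\omega = \omega_{\mathbf{v}}$ and $M = \{\mathbf{v}\}$, so that $\operatorname{dist}(x,M) = r_{\mathbf{v}}(x)$. This produces, for any fixed parameters $c, \zeta \in (0,1)$ with $c_0 := 1 - c(1+\zeta) > 0$, a countable set $(x_i)_i \subset \omega_{\mathbf{v}}$ and a finite overlap constant $N = N(d,c,\zeta)$ such that the balls $B_i = B_{c r_{\mathbf{v}}(x_i)}(x_i)$ cover $\omega_{\mathbf{v}}$, while the enlarged balls $\widehat B_i = B_{\widehat c r_{\mathbf{v}}(x_i)}(x_i)$ with $\widehat c := c(1+\zeta)$ satisfy the finite overlap bound. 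It then remains to calibrate $c$ (hence $\widehat c$) so that all $\widehat B_i$ lie inside $\Omega$ and to establish the summability \eqref{eq:lemma:covering-omega_c-10}.

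First I would verify the inclusion $\widehat B_i \subset \Omega$. By the definition of $\omega_{\mathbf{v}}^{\omegaeps}$, every $x \in \omega_{\mathbf{v}}$ satisfies $r_{\mathbf{v}}(x) < \omegaeps$ and $r_{\mathbf{e}}(x) \geq \omegaeps\, r_{\mathbf{v}}(x)$ for every $\mathbf{e} \in \mathcal{E}_{\mathbf{v}}$; moreover, for $\omegaeps$ small enough the distance from $x$ to any edge $\mathbf{e} \notin \mathcal{E}_{\mathbf{v}}$ is bounded below by a positive constant independent of $x$. Hence $\operatorname{dist}(x_i,\partial\Omega) \geq \omegaeps\, r_{\mathbf{v}}(x_i)$ (after possibly shrinking $\omegaeps$). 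Choosing $c, \zeta$ so small that $\widehat c = c(1+\zeta) < \omegaeps$ then guarantees $\widehat B_i \subset \Omega$. With this choice, all three properties (covering, finite overlap, containment in $\Omega$) hold simultaneously.

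For the bound \eqref{eq:lemma:covering-omega_c-10}, I would exploit the finite overlap of the $\widehat B_i$ to convert a sum into an integral. Since $|\widehat B_i| = c_d R_i^d$ in dimension $d=2$, and since on $\widehat B_i$ one has $r_{\mathbf{v}}(x) \asymp r_{\mathbf{v}}(x_i) \asymp R_i$ (by $\widehat c < 1$), for any $\delta > 0$ we estimate
\begin{equation*}
\sum_i R_i^{\delta} \lesssim \sum_i \int_{\widehat B_i} r_{\mathbf{v}}(x)^{\delta - d}\,dx \leq N \int_{\omega_{\mathbf{v}}} r_{\mathbf{v}}(x)^{\delta - 2}\,dx,
\end{equation*}
and a polar-coordinates computation around $\mathbf{v}$ gives $\int_{\omega_{\mathbf{v}}} r_{\mathbf{v}}^{\delta - 2}\,dx \lesssim \int_0^{\omegaeps} r^{\delta - 1}\,dr < \infty$ precisely because $\delta > 0$. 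This yields the claimed constant $C_\delta$.

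I do not expect any genuine obstacle: the result is essentially a bookkeeping consequence of Lemma~\ref{lemma:MW} once the distance function is identified with $r_{\mathbf{v}}$. The only point requiring care is the calibration of $\widehat c$ against the aperture parameter $\omegaeps$ so that the enlarged balls avoid both the edges emanating from $\mathbf{v}$ and the remaining edges of the polygon; this is what forces the dependence of $c, \widehat c$ on $\omegaeps$ in the statement.
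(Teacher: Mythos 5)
Your proof follows the paper's approach exactly: apply Lemma~\ref{lemma:MW} with $M=\{\mathbf{v}\}$ and $\omega=\omega_{\mathbf{v}}$, then derive the summability bound by converting $\sum_i R_i^\delta$ into an integral of $r_{\mathbf{v}}^{\delta-d}$ via the finite-overlap property. Two harmless slips worth noting: the $\widehat B_i$ are not in general contained in $\omega_{\mathbf{v}}$ (only in $\Omega$), so the final integral should be taken over $\Omega$ (or $\bigcup_i\widehat B_i$), and Lemma~\ref{lemma:MW} yields \emph{closed} balls, so (as the paper remarks) one must slightly increase $c$ to obtain a covering by open balls --- neither affects the substance of the argument.
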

\begin{proof}
Apply Lemma~\ref{lemma:MW} with $M = \{{\mathbf v}\}$ and sufficiently small parameters $c$, $\zeta>0$. 
Note that by possibly slightly increasing the parameter 
$c$, one can ensure that the open balls rather than the closed balls given by Lemma~\ref{lemma:MW} cover $\omega_{\mathbf v}$. 
Also, since $c < 1$, 
the index set ${\mathcal I}$ of Lemma~\ref{lemma:MW} cannot be finite so that ${\mathcal I} = {\mathbb N}$.  

To see (\ref{eq:lemma:covering-omega_c-10}), we compute with the spatial dimension $d = 2$ 
\begin{align*}
\sum_i R_i^\delta = 
\sum_i R_i^{\delta -d} R_i^d \lesssim \sum_i \int_{\widehat B_i} r_{\mathbf v}^{\delta-d}\,dx 
\stackrel{\text{finite overlap}}{\lesssim} \int_{\Omega} r_{\mathbf v}^{\delta-d}\,dx < \infty. 
\end{align*}
\end{proof}
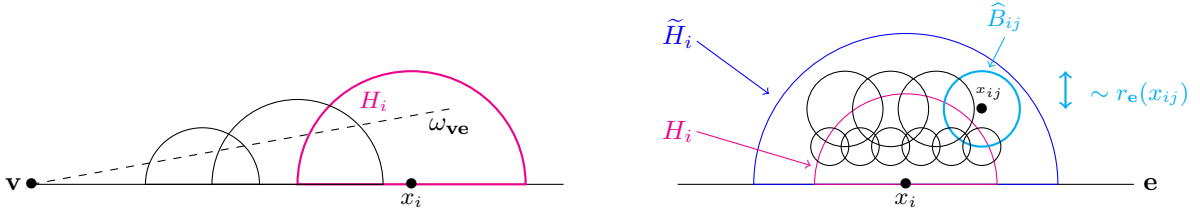
\begin{figure}
  \begin{center}
\begin{tikzpicture}
\draw (0,0) node {\textbullet} node[left]{$\mathbf{v}$}; 
  \draw[-] (0, -0) -- (7, 0);
  \draw[dashed] (0, -0) -- (5.5, 1) node[below] {$\omega_{\mathbf{ve}}$};
\newcommand{\cirs}{1.5}
\draw[magenta,thick] (5-\cirs,0) -- (5+\cirs, 0) arc (0:180:\cirs) --cycle; 
\draw (5,0) node {\textbullet} node[below]{\small $x_i$}; 
\draw[magenta] (4.5,1.1) node {\small $H_i$ }; 
\draw (3.5-0.75*\cirs,0) -- (3.5+0.75*\cirs, 0) arc (0:180:0.75*\cirs) --cycle; 
\draw (2.25-0.5*\cirs,0) -- (2.25+0.5*\cirs, 0) arc (0:180:0.5*\cirs) --cycle; 
\newcommand{\offs}{6.5}
\renewcommand{\cirs}{2.0}
  \draw[-] (\offs+2, 0) -- (\offs+8, 0) node[right] {$\mathbf{e}$};
\draw [blue] (\offs+5-\cirs,0) -- (\offs+5+\cirs,0) arc (0:180:\cirs) --cycle; 
\draw [magenta] (\offs+5-0.6*\cirs,0) -- (\offs+5+0.6*\cirs,0) arc (0:180:0.6*\cirs) --cycle; 
\draw (\offs+5,0) node {\textbullet} node[below]{\small $x_i$}; 
\newcommand{\cirss}{0.25*\cirs}
\draw[thick,cyan] (\offs+6,1) circle (\cirss); 
\draw (\offs+6,1) node {\textbullet}; 
\draw (\offs+6.1,1.0) node[above]{\tiny $x_{ij}$} ; 
\draw (\offs+5.4,1) circle (\cirss); 
\draw[cyan] (\offs+6.3,1.9) node[above]{\small $\widehat{B}_{ij}$};
\draw[cyan,thin,<-] (\offs+6.1,1.55) -- (\offs+6.3,1.95) ; 
 
    \draw[cyan,thick,<->] (\offs+7.1, 1) -- (\offs+7.1, 1+\cirss);
    \draw[cyan] (\offs+7.3,1.2) node[right] {\small $ \sim r_{\mathbf{e}}(x_{ij})$};
\draw (\offs+4.8,1) circle (\cirss); 
\draw (\offs+4.2,1) circle (\cirss); 
\draw (\offs+6,0.5) circle (0.5*\cirss); 
\draw (\offs+5.6,0.5) circle (0.5*\cirss); 
\draw (\offs+5.2,0.5) circle (0.5*\cirss); 
\draw (\offs+4.8,0.5) circle (0.5*\cirss); 
\draw (\offs+4.4,0.5) circle (0.5*\cirss); 
\draw (\offs+4.0,0.5) circle (0.5*\cirss); 
%
%
\draw[blue,thin,->] (\offs+2.25,1.9) -- (\offs+3.2,1.2) ; 
\draw (\offs+2,2) node [blue] {$\widetilde{H}_i$}; 
\draw[magenta,thin,->] (\offs+2.25,0.7) -- (\offs+3.75,0.25) ; 
\draw (\offs+2,0.7) node [magenta] {$H_i$}; 
\end{tikzpicture}
  \end{center}
\caption{\label{fig:covering-edge-vertex} Covering of $\omega_{\mathbf{v}\mathbf{e}}$. 
Left: the half-balls $H_i$ constructed in Lemma~\ref{lemma:covering-omega_ce}. 
Right: covering of $H_i$ by balls $B_{ij}$ such that the larger balls 
$\widehat{B}_{ij}$ are contained in a ball $\widetilde{H}_i$. 
For better illustration, only the larger balls  
$\widehat{B}_{ij}$ are shown, the balls $B_{ij}$ 
are included therein and still provide a covering of $H_i$.
        }
\end{figure}
We now introduce a covering of vertex-edge neighborhoods $\omegace$. 
We start by a
covering of half-balls resting on the edge $\mathbf{e}$ and with 
size proportional to the distance from the vertex, see Figure~\ref{fig:covering-edge-vertex} (left).
\begin{lemma}[covering of $\omega_{{\mathbf v}{\mathbf e}}$]
\label{lemma:covering-omega_ce}
Given ${\mathbf v} \in {\mathcal V}$, ${\mathbf e} \in {\mathcal E}({\mathbf v})$, there is $\omegaeps> 0$ and parameters $0 < c< \widehat c < 1$ 
as well as points $(x_i)_{i \in {\mathbb N}} \subset {\mathbf e}$ such that the following holds: 
\begin{enumerate}[(i)]
\item 
the sets $H_i\coloneqq  B_{c \operatorname{dist}(x_i,{\mathbf v})}(x_i) \cap \Omega$ are half-balls and the collection 
${\mathcal B}\coloneqq  \{H_i\,|\, i \in {\mathbb N}\}$ covers $\omega_{{\mathbf v}{\mathbf e}} = \omega^\omegaeps_{{\mathbf v}{\mathbf e}}$.  

\item
The collection $\widehat{\mathcal B}\coloneqq  \{\widehat H_i\coloneqq  B_{\widehat c \operatorname{dist}(x_i,{\mathbf v})}(x_i) \cap \Omega\}$ is a collection of half-balls and 
satisfies a finite overlap property, i.e., there is $N > 0$ depending only on the spatial dimension $d = 2$ and the parameters $c$, $\widehat c$ such that 
for all $x \in \R^2$ it holds that $\operatorname{card} \{i\,|\, x \in \widehat H_i\} \leq N$. 
\end{enumerate}
Furthermore, for every $\delta>0$ there is $C_\delta>0$ such that for the radii
$R_i\coloneqq  \widehat c \operatorname{dist}(x_i,{\mathbf v})(x_i)$ it holds that 
$\sum_i R^\delta_i \leq C_\delta$.  
\end{lemma}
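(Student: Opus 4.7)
The strategy is to reduce the two-dimensional covering problem to the one-dimensional Besicovitch-type result of Lemma~\ref{lemma:MW} applied along the edge $\mathbf{e}$, and then thicken the one-dimensional cover into genuine planar half-balls by choosing the constants sufficiently small.

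First I would identify $\mathbf{e}$ with its arc-length parametrization $t\mapsto \mathbf{v}+t\mathbf{e}_\parallel$, $t\in(0,\ell)$ where $\ell=|\mathbf{e}|$ is the edge length. Choose $T>0$ (to be fixed below) and apply Lemma~\ref{lemma:MW} in dimension $d=1$ to $\omega^{(1)}\coloneqq(0,T)$ with $M=\{0\}$ and parameters $c_1$, $\zeta_1$ satisfying $1-c_1(1+\zeta_1)>0$. This yields a countable set of parameter values $(t_i)_{i\in\mathbb{N}}\subset(0,T)$ such that the intervals $(t_i-c_1 t_i,\,t_i+c_1 t_i)$ cover $(0,T)$ and the enlarged intervals $(t_i-\widehat c_1 t_i,\,t_i+\widehat c_1 t_i)$, with $\widehat c_1\coloneqq(1+\zeta_1)c_1<1$, have bounded overlap $N_1$. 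Set $x_i\coloneqq \mathbf{v}+t_i\mathbf{e}_\parallel\in\mathbf{e}$, $c\coloneqq c_1$, $\widehat c\coloneqq \widehat c_1$, and define $H_i$, $\widehat H_i$ as in the statement. The fact that both $H_i$ and $\widehat H_i$ are genuine half-balls amounts to showing $B_{\widehat c t_i}(x_i)\cap\partial\Omega\subset\mathbf{e}$; this follows by taking $\widehat c$ small in terms of the interior angle of $\Omega$ at $\mathbf{v}$ (and at the other endpoint $\mathbf{v}'$ of $\mathbf{e}$) and then taking $\omegaeps$ (hence $T\lesssim \omegaeps$) small enough to keep every $x_i$ with $t_i<T$ uniformly away from the rest of $\partial\Omega\setminus\mathbf{e}$.

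The main step is to verify that $\{H_i\}$ covers $\omegace^{\omegaeps}$. Given $x\in\omegace^{\omegaeps}$, let $x^*$ be the orthogonal projection of $x$ onto the line through $\mathbf{e}$; by choosing $\omegaeps$ small we ensure $x^*\in\mathbf{e}$ with parameter $s\in(0,T)$. Decomposing $r_{\mathbf{v}}(x)^2=s^2+r_{\mathbf{e}}(x)^2$ and using the defining inequality $r_{\mathbf{e}}(x)<\omegaeps\, r_{\mathbf{v}}(x)$ of $\omegace^{\omegaeps}$ gives $r_{\mathbf{v}}(x)\le s/\sqrt{1-\omegaeps^2}$. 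The one-dimensional cover provides an index $i$ with $|s-t_i|<c_1 t_i$, in particular $s\le(1+c_1)t_i$, so
\begin{equation*}
|x-x_i|^2=|s-t_i|^2+r_{\mathbf{e}}(x)^2<c_1^2 t_i^2+\omegaeps^2 r_{\mathbf{v}}(x)^2\le t_i^2\Bigl(c_1^2+\tfrac{\omegaeps^2(1+c_1)^2}{1-\omegaeps^2}\Bigr).
\end{equation*}
For $\omegaeps$ small the bracket is strictly less than $c^2=c_1^2+\varepsilon'$; adjusting $c$ upward slightly within $(c_1,\widehat c)$ gives $x\in H_i$. The finite-overlap property of $\widehat{\mathcal B}$ is then easy: if $x\in\widehat H_i$ then $|x-x_i|<\widehat c\,t_i<t_i$, so the orthogonal foot $x^*$ satisfies $|x^*-x_i|<\widehat c t_i$, meaning its parameter lies in the enlarged 1D interval around $t_i$; hence the number of such indices is bounded by the 1D overlap constant $N_1$.

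Finally, the summability estimate $\sum_i R_i^\delta\le C_\delta$ follows by a now-standard argument. Since $\widehat c<1$, every $y\in\widehat H_i$ satisfies $r_{\mathbf{v}}(y)\sim t_i\sim R_i$ with uniform implied constants, and $|\widehat H_i|\sim R_i^2$, so
\begin{equation*}
\sum_i R_i^\delta\;\lesssim\;\sum_i R_i^{\delta-2}|\widehat H_i|\;\lesssim\;\sum_i\int_{\widehat H_i} r_{\mathbf{v}}(y)^{\delta-2}\,dy\;\stackrel{\text{finite overlap}}{\lesssim}\;\int_{\Omega} r_{\mathbf{v}}(y)^{\delta-2}\,dy,
\end{equation*}
and the last integral converges for every $\delta>0$ by polar coordinates around $\mathbf{v}$. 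The main obstacle will be the bookkeeping in the covering step: the calibration of $c$, $\widehat c$, and $\omegaeps$ must simultaneously (i) keep the 2D balls inside $\Omega$ away from other edges, (ii) absorb the tilt of $\omegace^{\omegaeps}$ off the edge axis into a slightly enlarged constant, and (iii) preserve $\widehat c<1$; none of these is deep, but they must be arranged consistently, and for this reason the constant $\omegaeps$ in the statement must be taken small relative to the geometry of $\Omega$ at $\mathbf{v}$.
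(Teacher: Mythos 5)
Your proposal is correct and follows essentially the same route as the paper: apply the one-dimensional Besicovitch covering lemma (Lemma~\ref{lemma:MW}) along the edge with $M$ the vertex, thicken the resulting intervals into planar (half-)balls by a slight enlargement of $c$ and a reduction of $\omegaeps$, deduce the finite overlap in $\R^2$ from the overlap of the projections onto the line through $\mathbf{e}$, and obtain $\sum_i R_i^\delta<\infty$ by comparison with $\int_\Omega r_{\mathbf{v}}^{\delta-2}$. Your explicit computation for the covering step merely spells out what the paper summarizes as ``by possibly slightly increasing the parameter $c$.''
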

\begin{proof}
Let $\widetilde{\mathbf e}$ be the (infinite) line containing ${\mathbf e}$. 
We apply Lemma~\ref{lemma:MW} to the 1D line segment ${\mathbf e}\cap B_{\omegaeps}({\mathbf v})$ (for some sufficiently small $\omegaeps$) 
 and $M\coloneqq \{\mathbf v\}$ and the parameter $c$ sufficiently small so that 
$B_{2 c \operatorname{dist}(x,{\mathbf v})}(x) \cap \Omega $ is a half-ball for all $x \in {\mathbf e} \cap B_{\omegaeps}({\mathbf v})$. 
Lemma~\ref{lemma:MW} provides a collection $(x_i)_{i \in {\mathbb N}} \subset {\mathbf e}$ such the balls 
$B_i\coloneqq  B_{c \operatorname{dist}(x_i,{\mathbf v})}(x_i) \subset {\mathbb R}^2$
and the stretched balls $\widehat B_i\coloneqq  B_{c (1+\zeta)\operatorname{dist}(x_i,{\mathbf v})}(x_i) \subset {\mathbb R}^2$ (for suitable, sufficiently small $\zeta$)
satisfy the following: the intervals $\{B_i\cap \widetilde{\mathbf e}\,|\, i \in {\mathbb N}\}$  
cover $B_{\omegaeps}({\mathbf v}) \cap \mathbf e$, and the intervals 
$\{\widehat B_i\cap \widetilde {\mathbf e}\,|\, i \in {\mathbb N}\}$ satisfy a finite overlap condition on $\widetilde{\mathbf e}$. 
By possibly slightly increasing the parameter $c$ (e.g., by replacing $c$ with $c(1+\zeta/2)$), 
the newly defined balls $B_i$ then cover a set 
$\omega^{\omegaeps}_{{\mathbf v}{\mathbf e}}$ 
for a possibly reduced $\omegaeps$. 
It remains to see that the balls $\widehat B_i$ satisfy a finite overlap condition on ${\mathbb R}^2$: 
given $x \in \widehat B_i$,
its projection $x_{\mathbf e}$ onto $\widetilde{\mathbf e}$ satisfies $x_{\mathbf e} \in \widehat B_i\cap \widetilde{\mathbf{e}}$ 
since $x_i \in {\mathbf e} \subset \widetilde{\mathbf e}$. 
This 
implies that the overlap constants of the balls $\widehat B_i$ in ${\mathbb R}^2$ is the same as the overlap constant of the intervals
$\widehat B_i \cap \widetilde{\mathbf e}$ in $\widetilde{\mathbf e}$. 
The
half-balls $H_i\coloneqq B_i \cap \Omega$ and $\widehat H_i\coloneqq  \widehat B_i \cap \Omega$ 
have the stated properties. 

Finally, the convergence of the sum $\sum_i R_i^\delta$ is shown 
by the same arguments as in Lemma~\ref{lemma:covering-omega_c}. 
\end{proof}
We will also need a covering of the half-balls $H_i$ constructed in
Lemma~\ref{lemma:covering-omega_ce}, which we introduce in the next lemma.
See also Figure~\ref{fig:covering-edge-vertex} (right).
\begin{lemma}
\label{lemma:subcovering-omega_ce}
Let ${\mathcal B} = \{H_i\,|\, i \in {\mathbb N}\}$ and 
$\widehat {\mathcal B} = \{\widehat H_i\,|\, i \in {\mathbb N}\}$ 
be constructed in Lemma~\ref{lemma:covering-omega_ce}.
Fix a $\widetilde c \in (c , \widehat c)$ with $c$, $\widehat c$ from Lemma~\ref{lemma:covering-omega_ce} and define the 
collection $\widetilde{\mathcal B}\coloneqq  \{\widetilde H_i\coloneqq  B_{\widetilde cr_{\mathbf{v}}(x_i)}(x_i)\cap\Omega\,|\, i \in {\mathbb N}\}$ 
of half-balls intermediate to the half-balls $H_i$ and $\widehat H_i$. 
 
There are constants $0 < c_1 < \widehat c_1 < 1$ such that the following holds: for each $i$, there are points $(x_{ij})_{j \in {\mathbb N}} \subset H_i$ 
such that 
the collection ${\mathcal B}_i\coloneqq  \{B_{ij}\coloneqq  B_{c_1r_{\mathbf{e}}(x_{ij})}(x_{ij})\}$ covers $H_i$ 
and the collection 
 $\widehat {\mathcal B}_i\coloneqq  \{\widehat B_{ij}\coloneqq  B_{\widehat c_1 r_\mathbf{e}(x_{ij})} (x_{ij})\}$ 
satisfies 
$\widehat B_{ij} \subset \widetilde H_i$ for all $j$ as well as a finite overlap
property, i.e., there is $N > 0$ independent of $i$ such that for all $x \in
\R^2$ it holds that 
$ \operatorname{card}\{j\,|\,  x \in \widehat B_{ij}\} \leq N$. 
\end{lemma}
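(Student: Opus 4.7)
The plan is to apply the Besicovitch-type covering Lemma~\ref{lemma:MW} separately to each half-ball $H_i$, using its flat boundary portion (the piece of $\mathbf{e}$ on which it rests) as the distinguished set $M$. The key observation that makes this work is that for points in a half-ball resting on $\mathbf{e}$, the distance to $\mathbf{e}$ agrees with the distance to that flat boundary chord, so the balls produced by Lemma~\ref{lemma:MW} will have radii proportional to $r_{\mathbf{e}}$ rather than merely to the distance to some local set. Scale-invariance of the construction then delivers an overlap constant $N$ that is uniform in $i$.

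More precisely, for fixed $i$ I would set $M_i \coloneqq \overline{\mathbf{e}} \cap \overline{H_i}$, which is a chord on the flat side of the half-ball $H_i$. Since $H_i$ is the intersection of $\Omega$ with a ball centered at $x_i \in \mathbf{e}$, the orthogonal projection onto the line $\widetilde{\mathbf{e}}$ of any point $x \in H_i$ lies in $M_i$, so $\operatorname{dist}(x, M_i) = r_{\mathbf{e}}(x)$. I then apply Lemma~\ref{lemma:MW} on $\omega = H_i$ with set $M_i$ and parameters $c_1, \zeta_1 \in (0,1)$ chosen to satisfy the Besicovitch condition $1 - c_1(1+\zeta_1) > 0$ together with the geometric restriction
\begin{equation*}
c + c_1(1+\zeta_1)\, c \;\leq\; \widetilde c,
\end{equation*}
which is possible because $\widetilde c > c$. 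This yields points $(x_{ij})_{j\in\mathbb{N}} \subset H_i$ such that the open balls $B_{ij} = B_{c_1 r_{\mathbf{e}}(x_{ij})}(x_{ij})$ cover $H_i$ and the stretched balls $\widehat{B}_{ij} = B_{\widehat c_1 r_{\mathbf{e}}(x_{ij})}(x_{ij})$ with $\widehat c_1 \coloneqq c_1(1+\zeta_1)$ have a finite overlap property with overlap constant $N$ depending only on $d=2$, $c_1$, and $\zeta_1$, and in particular independent of $i$.

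It remains to check $\widehat B_{ij}\subset \widetilde H_i$. For the ball part of the inclusion, any $y \in \widehat{B}_{ij}$ satisfies, using $r_{\mathbf{e}}(x_{ij}) \leq |x_{ij}-x_i| \leq c\, r_{\mathbf{v}}(x_i)$,
\begin{equation*}
|y-x_i| \;\leq\; |y-x_{ij}| + |x_{ij}-x_i| \;\leq\; \widehat c_1 c\, r_{\mathbf{v}}(x_i) + c\, r_{\mathbf{v}}(x_i) \;\leq\; \widetilde c\, r_{\mathbf{v}}(x_i),
\end{equation*}
by the choice of $c_1, \zeta_1$. For the inclusion in $\Omega$, the Besicovitch condition forces the stretched balls $\widehat{B}_{ij}$ to have radius strictly less than the distance from $x_{ij}$ to $M_i$, so $\widehat{B}_{ij}$ does not cross $\mathbf{e}$; together with the preceding ball estimate this gives $\widehat{B}_{ij} \subset B_{\widetilde c r_{\mathbf{v}}(x_i)}(x_i) \cap \Omega = \widetilde{H}_i$.

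I do not expect a real obstacle here: the lemma is a geometric scaling statement and the only subtle point is to ensure that the parameters $c_1, \zeta_1$ can be chosen once and for all, independently of $i$, so that the overlap constant $N$ does not degrade as the half-balls shrink toward $\mathbf{v}$. This is immediate because all $H_i$ are similar (homothetic) half-balls of radius proportional to $r_{\mathbf{v}}(x_i)$, so the construction is the same in each $H_i$ up to a scaling that preserves the overlap count. If one wants to avoid performing Lemma~\ref{lemma:MW} separately on every $H_i$, the same construction can be carried out once on a reference unit half-ball and transported to each $H_i$ by the affine map $x_i + r_{\mathbf{v}}(x_i)(\,\cdot\,)$, which again makes the uniformity of $N$ manifest.
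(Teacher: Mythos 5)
Your proposal is correct and takes essentially the same approach as the paper: apply Lemma~\ref{lemma:MW} on $\omega=H_i$ with the flat boundary chord as $M$, note that for $x\in H_i$ the distance to $M$ equals $r_{\mathbf{e}}(x)$, and pick the Besicovitch parameters small enough (uniformly in $i$) to guarantee $\widehat B_{ij}\subset\widetilde H_i$ and a scale-invariant overlap constant. Your write-up is more detailed than the paper's terse proof, in particular in making explicit that $M_i=\overline{\mathbf{e}}\cap\overline{H_i}$ satisfies $M_i\subset\partial H_i$ (which Lemma~\ref{lemma:MW} formally requires) and in deriving the parameter restriction $c+\widehat c_1 c\le\widetilde c$; these are welcome clarifications rather than deviations.
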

\begin{proof} 
We apply Lemma~\ref{lemma:MW} with $M = \{\mathbf e\}$ and $\omega = H_i$. The
parameters $c$ and $\zeta$ are chosen small enough so that the balls $B_x$ in 
Lemma~\ref{lemma:MW} satisfy $\widehat B_x \subset \widetilde H_i$. 
Then, the lemma follows from Lemma~\ref{lemma:MW}. 
\end{proof}

\subsection{Weighted $H^p$-regularity for the extension problem}

To illustrate the techniques, we start with the simplest case of estimates 
in vertex neighborhoods $\omega_{\mathbf{v}}$. 
It is worth stressing that we have 
\begin{align*}
r_{\mathbf{e}} \sim r_{\mathbf{v}} \qquad \mbox{ on $\omega_{\mathbf{v}}$}. 
\end{align*}
The following lemma provides higher order regularity estimates in a vertex weighted norm for solutions to the Caffarelli-Silvestre extension problem with smooth data.

  \begin{lemma}[Weighted $H^p$-regularity in $\omega_{\mathbf{v}}$] 
\label{lemma:regularity-omega_c}
Let $\omega_{{\mathbf v}} = \omega^\omegaeps_{{\mathbf v}}$ be given for some $\omegaeps > 0$ and ${\mathbf v} \in {\mathcal V}$.  Let $U$ be the solution of 
(\ref{eq:minimization}).
There is $\gamma > 0$ depending only on $s$, $\Omega$, $\omega_{{\mathbf v}}$, and $H$, 
and for every $\varepsilon \in (0,1)$, there exists $C_\varepsilon>0$ 
depending additionally on $\varepsilon$
such that for all $\beta\in \N^2_0$ there holds with $p = \betam$
\begin{align*}
  \|r_{\mathbf{v}}^{p-1/2+\varepsilon} \dbeta \nabla U\|_{L^2_{\alpha}(\omega^+_{\mathbf{v}})}^2 &\leq
C_\varepsilon \gamma^{2p+1}p^{2p} \bigg[ \|f\|^2_{H^{1}(\Omega)} + \|F\|^2_{L^2_{-\alpha}(\R^2\times(0,H))}   
       \\ & \qquad +
       \sum_{j=2}^{p+1} p^{-2j}\left(\max_{\etam=j}\norm{\deta f}^2_{L^2(\Omega)} 
  +\max_{\etam=j-1} \norm{\deta F}^2_{L^2_{-\alpha}(\R^2\times(0,H))} \right)\bigg].
\end{align*}
\end{lemma}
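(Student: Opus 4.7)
The plan is to combine the Besicovitch covering of $\omega_{\mathbf{v}}$ from Lemma~\ref{lemma:covering-omega_c} with the high-order interior Caccioppoli inequality of Corollary~\ref{cor:CaccHighInt}, using the global regularity shift (Lemma~\ref{lem:regularity2D} and Lemma~\ref{lem:estH12D}) to handle the base energy term and the summability $\sum_i R_i^\delta \leq C_\delta$ to control the data contributions. Crucially, because $\omega_{\mathbf{v}}$ stays away from the edges, the balls $\widehat B_i$ of radius $R_i \coloneqq \widehat c\, r_{\mathbf{v}}(x_i)$ from the covering lie strictly inside $\Omega$, so the interior Caccioppoli estimate applies; moreover $r_{\mathbf{v}} \sim R_i$ on $\widehat B_i$, and $r_{\partial\Omega} \sim r_{\mathbf{v}}$ on $\omega_{\mathbf{v}}^+$.

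First, I would localize the weighted norm on each ball by pulling the weight out: on each $B_i$,
\begin{equation*}
\|r_{\mathbf{v}}^{p-1/2+\varepsilon} \dbeta\nabla U\|_{L^2_\alpha(B_i^+)}^2 \lesssim R_i^{2p-1+2\varepsilon}\|\dbeta\nabla U\|_{L^2_\alpha(B_i^+)}^2.
\end{equation*}
Applying Corollary~\ref{cor:CaccHighInt} on the concentric pair $B_i \subset \widehat B_i$ then bounds this by
\begin{equation*}
(\gamma p)^{2p} R_i^{-1+2\varepsilon}\|\nabla U\|_{L^2_\alpha(\widehat B_i^+)}^2 + \sum_{j=1}^{p}(\gamma p)^{2(p-j)} R_i^{2j-1+2\varepsilon}\Bigl(\max_{|\eta|=j}\|\deta f\|_{L^2(\widehat B_i)}^2 + \max_{|\eta|=j-1}\|\deta F\|_{L^2_{-\alpha}(\widehat B_i^+)}^2\Bigr).
\end{equation*}
The main obstacle is the leading factor $R_i^{-1+2\varepsilon}$: by itself this is not summable as $R_i \to 0$, so finite overlap alone is not enough.

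To overcome this, I invoke the regularity shift: pick $t \in (0,1/2)$ with $2t > 1 - 2\varepsilon$, e.g.\ $t = 1/2 - \varepsilon/2$. Lemma~\ref{lem:regularity2D} combined with Lemma~\ref{lem:estH12D} gives
$\|r_{\partial\Omega}^{-t}\nabla U\|_{L^2_\alpha(\Omega^+)}^2 \lesssim N^2(U,F,f)$, and since $r_{\partial\Omega} \sim r_{\mathbf{v}}$ on $\omega_{\mathbf{v}}$, also $\|r_{\mathbf{v}}^{-t}\nabla U\|_{L^2_\alpha(\omega_{\mathbf{v}}^+)}^2 \lesssim N^2(U,F,f)$. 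Using $r_{\mathbf{v}} \sim R_i$ on $\widehat B_i$, we trade
$\|\nabla U\|_{L^2_\alpha(\widehat B_i^+)}^2 \lesssim R_i^{2t}\|r_{\mathbf{v}}^{-t}\nabla U\|_{L^2_\alpha(\widehat B_i^+)}^2$, so the energy contribution becomes $(\gamma p)^{2p} R_i^{2\varepsilon - 1 + 2t}\|r_{\mathbf{v}}^{-t}\nabla U\|_{L^2_\alpha(\widehat B_i^+)}^2$ with exponent $2\varepsilon - 1 + 2t = \varepsilon > 0$.

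Finally, I sum over $i$. For the energy term, using $R_i \leq \mathrm{diam}(\Omega)$ and the finite overlap of the $\widehat B_i$,
\begin{equation*}
\sum_i (\gamma p)^{2p} R_i^{\varepsilon}\|r_{\mathbf{v}}^{-t}\nabla U\|_{L^2_\alpha(\widehat B_i^+)}^2 \lesssim (\gamma p)^{2p}\|r_{\mathbf{v}}^{-t}\nabla U\|_{L^2_\alpha(\omega_{\mathbf{v}}^+)}^2 \lesssim (\gamma p)^{2p} N^2(U,F,f),
\end{equation*}
and via \eqref{eq:CUFf-simplified} the right-hand side is dominated by $\|f\|_{H^{1-s}(\Omega)}^2 + \|F\|_{L^2_{-\alpha}(\R^2\times(0,H))}^2$, which fits inside the $\tNp(F,f)$-like structure claimed. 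For each data term (both for $f$ and $F$), the exponent $2j - 1 + 2\varepsilon \geq 1 + 2\varepsilon > 0$ for $j \geq 1$, so $\sum_i R_i^{2j-1+2\varepsilon}\|\deta f\|_{L^2(\widehat B_i)}^2 \leq C_\varepsilon \max_{|\eta|=j}\|\deta f\|_{L^2(\Omega)}^2$ by \eqref{eq:lemma:covering-omega_c-10} together with $\widehat B_i \subset \Omega$. Re-indexing the data sum (absorbing the factor $2^j$ from the $j=1$ $f$-contribution into $\|f\|_{H^1(\Omega)}^2$ and absorbing constants like $\max(1,\mathrm{diam}\Omega)$ into $\gamma$) produces the claimed bound. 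The genuine difficulty is Step~3: identifying the need for a fractional Besov-type regularity shift to pay for a power of $R_i$ on the energy term, which is precisely what the Savaré-type Lemma~\ref{lem:regularity2D} provides.
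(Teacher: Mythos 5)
Your proposal is correct and follows essentially the same route as the paper: cover $\omega_{\mathbf{v}}$ by the Besicovitch balls of Lemma~\ref{lemma:covering-omega_c}, apply the high-order interior Caccioppoli inequality (Corollary~\ref{cor:CaccHighInt}) on each pair $(B_i,\widehat B_i)$, pay for the non-summable power of $R_i$ on the energy term via the Savar\'e-type shift of Lemmas~\ref{lem:regularity2D} and~\ref{lem:estH12D} with $t=1/2-\varepsilon/2$, and sum using $\sum_i R_i^{\varepsilon}<\infty$ and finite overlap. The only (immaterial) variation is that you keep the weighted local energy $\|r_{\mathbf{v}}^{-t}\nabla U\|_{L^2_\alpha(\widehat B_i^+)}$ and sum it by finite overlap, whereas the paper bounds $\|\nabla U\|^2_{L^2_\alpha(\widehat B_i^+)}\lesssim R_i^{2t}N^2(U,F,f)$ directly and sums $R_i^{\varepsilon}$.
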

\begin{proof} 
The case $p = 0$ follows from Lemma~\ref{lem:estH12D} and the estimates (\ref{eq:CUFf}), (\ref{eq:CUFf-simplified}). We therefore assume $p \in \N$.
 
Let the covering $\omega_{\mathbf{v}} \subset \bigcup_i B_i$ with $B_i = B_{c \operatorname{dist}(x_i,{\mathbf v})}(x_i)$ and stretched 
balls $\widehat B_i = B_{\widehat c \operatorname{dist}(x_i,{\mathbf v})}(x_i)$ be given by Lemma~\ref{lemma:covering-omega_c}. It will be convenient
to denote $R_i\coloneqq  \widehat c \operatorname{dist}(x_i,{\mathbf v})$ the radius of the ball $\widehat B_i$ and to note that, for some $C_B > 0$, 
\begin{equation}
\label{eq:estimate-r_c}
\forall i \in {\mathbb N}\quad \forall x \in \widehat B_i 
\qquad 
C^{-1}_B R_i \leq 
r_{\mathbf v}(x) \leq C_B  R_i. 
\end{equation}
We assume (for convenience) that $R_i \leq 1/2$ for all $i$. 

Let $\beta$ be a multi index and $p = \betam$. 
By (\ref{eq:Ntilde-10}) there is $C_N > 0$ independent of $p$ such that 
$N^2(U,F,f) \leq C_N \widetilde N^{(p)}(F,f)$,  
where $\widetilde{N}^{(p)}$ is defined in (\ref{eq:Ntilde}). 
We employ Corollary~\ref{cor:CaccHighInt} to the pair ($B_i$, $\widehat B_i)$ of concentric balls 
together with Lemma~\ref{lem:estH12D} for $t=1/2-\varepsilon/2$ and $N^2(U,F,f) \leq C_N \widetilde N^{(p)} (F,f)$ 
to obtain, for suitable $\gamma > 0$, 
\begin{align*}
\norm{\dbeta\nabla U}^2_{L^2_\alpha(B^+_{i})} \leq
  \gamma^{2p+1} R_i^{-2p+1-\varepsilon} p^{2p} \widetilde N^{(p)} (F,f). 
\end{align*}
Summation over $i$ (with very generous bounds for the
data $f$, $F$) and (\ref{eq:estimate-r_c}) provides 
\begin{align*}
\|r_{\mathbf{v}}^{p - 1/2 +\varepsilon} \dbeta \nabla U\|^2_{L^2_\alpha(\omega^+_{\mathbf{v}})}
   &\leq
       C_B^{2p-1+2\varepsilon}\sum_i R_i^{2p-1+2\varepsilon} \|\dbeta \nabla U\|^2_{L^2_{\alpha}(B_{i}^+)}
  \\ & \leq 
      \gamma^{2p+1} C_B^{2p+1} p^{2p}\bigg( \sum_i R_i^{\varepsilon} \biggr) \widetilde N^{(p)} (F,f)
\\ & \leq 
       C_\varepsilon (\gamma C_B)^{2p+1}p^{2p} \bigg\{ \|f\|^2_{H^{1}(\Omega)} + \|F\|^2_{L^2_{-\alpha}(\R^2\times(0,H))}   
       \\ & \qquad +
       \sum_{j=2}^{p+1} p^{-2j}\left(\max_{\etam=j}\norm{\deta f}^2_{L^2(\Omega)} 
  +\max_{\etam=j-1} \norm{\deta F}^2_{L^2_{-\alpha}(\R^2\times(0,H))} \right)\bigg\},
\end{align*}
since $\sum_{i}R_i^{\varepsilon} \eqqcolon C_\varepsilon < \infty$ 
by Lemma~\ref{lemma:covering-omega_c}. 
Relabelling $\gamma C_B$ as $\gamma$ gives the result. 
\end{proof}

We continue with the more involved case of vertex-edge neighborhoods.
  \begin{lemma}[Weighted $H^p$-regularity in $\omega_{\mathbf{v}\mathbf{e}}$]
\label{lemma:regularity-omega_ce}
Let $\omegaeps>0$ be sufficiently small.
There exists $\gamma > 0$ depending only on $s$, $\omegaeps$, $\Omega$, and $H$,  and for  
any $\varepsilon \in (0,1)$, there exists $C_\varepsilon>0$ depending additionally on $\varepsilon$
such that the solution $U$ of (\ref{eq:minimization}) satisfies, for all $(\ppar,\pperp)\in \N^2_0$ with $p = \ppar + \pperp$ 
\begin{align*}
  &\norm{r_{\mathbf{e}}^{\pperp -1/2+\varepsilon} 
         r_{\mathbf{v}}^{\ppar+\varepsilon} D^{\pperp}_{x_\perp} D^{\ppar}_{x_\parallel} \nabla U}^2_{L^2_\alpha((\omegace^\omegaeps)^{H/4})}
  \\ &\quad \leq C_\varepsilon \gamma^{2p+1} p^{2p}
           \bigg[ \| f\|_{H^1(\Omega)}^2  + \|F\|^2_{L^2_{-\alpha}(\R^2\times (0,H))}   + \sum_{j=2}^{p+1} p^{-2j} \Big(\max_{\etam=j}\norm{\deta f}^2_{L^2(\Omega)} 
                  +\max_{\etam=j-1} \norm{\deta F}^2_{L^2_{-\alpha}(\R^2\times(0,H))}\Big)\bigg]. 
\end{align*}
\end{lemma}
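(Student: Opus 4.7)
The strategy is a two--scale covering argument that mirrors Lemma~\ref{lemma:regularity-omega_c} but uses the two-level covering of Lemmas~\ref{lemma:covering-omega_ce} and \ref{lemma:subcovering-omega_ce} in order to separate the edge-scale from the vertex-scale. First I cover $\omega_{\mathbf{v}\mathbf{e}}$ by the half-balls $H_i = B_{c r_{\mathbf v}(x_i)}(x_i)\cap\Omega$ with $x_i\in\mathbf{e}$, with stretched half-balls $\widehat H_i\subset\widetilde H_i$ satisfying finite overlap, and then I cover each $H_i$ by the \emph{interior} balls $B_{ij}=B_{c_1 r_{\mathbf e}(x_{ij})}(x_{ij})\subset\Omega$ with $\widehat B_{ij}\subset \widetilde H_i$.

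On each interior ball $\widehat B_{ij}$ I apply the localized-in-$y$ interior Caccioppoli estimate (Corollary~\ref{cor:localYCaccioppoli}) with $\theta=H/4$, $\theta'=H/2$ to the function $D^{\ppar}_{x_\parallel}U$, which again solves an extension problem of the form \eqref{eq:extension2D} with data $D^{\ppar}_{x_\parallel}F$ and $D^{\ppar}_{x_\parallel}f$. Choosing $|\beta|=\pperp$ with all derivatives in the normal direction yields
\[
 \|D^{\pperp}_{x_\perp}D^{\ppar}_{x_\parallel}\nabla U\|^2_{L^2_\alpha(B_{ij}^{H/4})}
 \lesssim (\gamma \pperp)^{2\pperp} r_{ij}^{-2\pperp}\|\nabla D^{\ppar}_{x_\parallel}U\|^2_{L^2_\alpha(\widehat B_{ij}^{H/2})} + \text{data terms},
\]
where $r_{ij}=r_{\mathbf e}(x_{ij})$. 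Multiplying by $r_{ij}^{2\pperp-1+2\varepsilon}$, summing in $j$, and using $r_{\mathbf e}\sim r_{ij}$ on $\widehat B_{ij}$ together with the finite overlap of the $\widehat B_{ij}$ inside $\widetilde H_i$ collapses this inner sum to a bound on $\|r_{\mathbf e}^{-1/2+\varepsilon}\nabla D^{\ppar}_{x_\parallel}U\|^2_{L^2_\alpha(\widetilde H_i^{H/2})}$ plus inner data terms.

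For the outer level I use the fact that on $\widetilde H_i$ (which sits in a neighborhood of the straight edge $\mathbf{e}$ and is separated from the remaining boundary) one has $r_{\mathbf e}\sim r_{\partial\Omega}$. I then apply the tangential shift theorem Lemma~\ref{lem:localhighregularity2D} with $R=R_i\sim r_{\mathbf v}(x_i)$ and a product cutoff $\eta_i=\eta_{i,x}\eta_y$ equal to $1$ on $\widetilde H_i\times(-H/2,H/2)$ and supported in $\widehat H_i\times(-H,H)$. The crucial point is to choose the smoothness index $t=1/2-\varepsilon_0$ with $0<\varepsilon_0<\varepsilon$; this gives
\[
 \|r_{\partial\Omega}^{-(1/2-\varepsilon_0)}\nabla D^{\ppar}_{x_\parallel}U\|^2_{L^2_\alpha(\widetilde H_i^{H/2})}
 \lesssim R_i^{-2\ppar-2\varepsilon_0}(\gamma\ppar)^{2\ppar}(1+\gamma\ppar)\,\tNppar(F,f).
\]
To reconcile the two weights I write $r_{\mathbf e}^{-1+2\varepsilon}=r_{\mathbf e}^{-1+2\varepsilon_0}r_{\mathbf e}^{2(\varepsilon-\varepsilon_0)} \le R_i^{2(\varepsilon-\varepsilon_0)}r_{\mathbf e}^{-1+2\varepsilon_0}$ on $\widetilde H_i$. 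The outer summation in $i$, weighted by $r_{\mathbf v}^{2\ppar+2\varepsilon}\sim R_i^{2\ppar+2\varepsilon}$, then produces the sum $\sum_i R_i^{4(\varepsilon-\varepsilon_0)}$, which is finite by the decay estimate in Lemma~\ref{lemma:covering-omega_ce} precisely because I took $\varepsilon_0<\varepsilon$.

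The combinatorial part is then routine: the growth factors combine via $(\gamma\pperp)^{2\pperp}(\gamma\ppar)^{2\ppar+1}\le C\gamma^{2p+1}\ppar^{2\ppar}\pperp^{2\pperp}\le C\gamma^{2p+1}p^{2p}$ with $p=\ppar+\pperp$, and $\tNppar(F,f)\le \tNp(F,f)$ up to a harmless constant. The inner data terms picked up in the Caccioppoli step, which involve $\dbeta F$ and $\dbeta f$ with $|\beta|$ ranging over $\{1,\dots,p\}$, reassemble exactly into the structure of $\tNp(F,f)$ defined in \eqref{eq:Ntilde} after factoring out $(\gamma p)^{2p}$. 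The main obstacle is precisely this weight-matching: the shift-theorem exponent $R_i^{-2\ppar-1+2t}$ must exactly eat the vertex weight $R_i^{2\ppar+2\varepsilon}$ while still leaving a strictly positive power of $R_i$ to make the countable outer sum convergent, which forces the bookkeeping with the auxiliary parameter $\varepsilon_0\in(0,\varepsilon)$ and a careful use of $r_{\mathbf e}\le R_i$ on $\widetilde H_i$.
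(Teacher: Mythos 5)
Your plan matches the paper's proof in all essentials: the two--level Besicovitch covering (half-balls $H_i$ on the vertex scale, interior balls $B_{ij}$ on the edge scale), an interior $y$-localized Caccioppoli estimate (Corollary~\ref{cor:localYCaccioppoli}) on the inner balls applied to $D^{\ppar}_{x_\parallel}U$, the local tangential shift theorem (Lemma~\ref{lem:localhighregularity2D}) on the outer half-balls with $t$ close to $1/2$, and a small slack in the exponent to render the outer series $\sum_i R_i^{\cdot}$ convergent. Your auxiliary parameter $\varepsilon_0\in(0,\varepsilon)$ is exactly the paper's choice $\varepsilon_0=\varepsilon/2$ (they work with the stronger weight $r_{\mathbf e}^{\pperp-1/2+\varepsilon/2}$ and relax at the very end via $r_{\mathbf e}^{\varepsilon}\lesssim r_{\mathbf e}^{\varepsilon/2}$), so the weight-matching bookkeeping is the same.

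One concrete point to repair: you invoke Lemma~\ref{lem:localhighregularity2D} for the outer estimate without restriction on $\ppar$, but that lemma is stated (and proved) only for $p\in\N$; for $\ppar=0$ the construction $\widetilde U^{(p)}=\eta D^{p}_{x_\parallel}U$ degenerates and the argument there does not apply. As the paper does, the case $\ppar=0$ must be handled separately by summing over $i$ with the finite-overlap property, using $r_{\partial\Omega}\leq r_{\mathbf e}$ and the \emph{global} shift estimate of Lemma~\ref{lem:estH12D} to bound $\|r_{\partial\Omega}^{-1/2+\varepsilon_0}\nabla U\|^2_{L^2_\alpha(\Omega^+)}$ directly by $N^2(U,F,f)$. (Similarly, the base case $p=0$ of the lemma statement should be noted to follow from Lemma~\ref{lem:estH12D} and \eqref{eq:CUFf}--\eqref{eq:CUFf-simplified}.) Also worth recording: $D^{\pperp}_{x_\perp}$ must be expanded into Cartesian derivatives via $|D^{\pperp}_{x_\perp}v|\leq 2^{\pperp/2}\max_{|\beta|=\pperp}|\partial_x^\beta v|$ before Corollary~\ref{cor:localYCaccioppoli} is applicable, since that corollary is stated for multi-indices; this introduces the $2^{\pperp}$ that the paper tracks explicitly. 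These are small fixes and do not change your overall strategy.
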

\begin{proof}
As in the proof of Lemma~\ref{lemma:regularity-omega_c}, the case $p = 0$ follows from Lemma~\ref{lem:estH12D} 
and the estimates (\ref{eq:CUFf}), (\ref{eq:CUFf-simplified}) so that we may assume $p \in \N$.
By Lemma~\ref{lemma:subcovering-omega_ce}, for sufficiently small $\omegaeps$, 
there is a covering of $\omegace^{\omegaeps}$ by half-balls $(H_i)_{i \in {\mathbb N}}$ 
with corresponding stretched half-balls $(\widehat H_i)_{i \in {\mathbb N}}$ and 
intermediate half-balls $(\widetilde H_i)_{i \in {\mathbb N}}$ 
such that each $H_i$ is covered by balls 
${\mathcal B}_i\coloneqq \{B_{ij}\,|\, j \in {\mathbb N}\}$ with the stretched balls $\widehat B_{ij}$ satisfying a finite overlap condition 
and being contained in $\widetilde H_i$. 
We abbreviate the radii of the half-balls $\widehat H_i$ and the balls $\widehat B_{ij}$ by $R_i$ and $R_{ij}$ respectively. 
We note that the half-balls $\widehat H_i$ and the balls $\widehat B_{ij}$ satisfy for all $i$, $j$: 
\begin{align}
\label{eq:lemma:regularity-omega_ce-10}
\forall x \in \widehat H_i: \qquad C_B^{-1} R_i \leq r_{\mathbf v}(x) \leq C_B R_i, \\ 
\label{eq:lemma:regularity-omega_ce-20}
\forall x \in \widehat B_{ij}: \qquad C_B^{-1} R_{ij} \leq r_{\mathbf e}(x) \leq C_B R_{ij}  
\end{align}
for some $C_B>0$ depending only on $\omegace^{\omegaeps}$. 
For convenience, we assume that $R_i \leq 1/2$ for all $i$ and  
hence $R_{ij} \leq 1/2$ for all $i$, $j$. 

Let ${\ppar}$, ${\pperp} \in \N_0$. Since the balls $(B_{ij})_{i,j \in {\mathbb N}}$ cover $\omegace^{\omegaeps}$, 
we estimate using 
(\ref{eq:lemma:regularity-omega_ce-10}), (\ref{eq:lemma:regularity-omega_ce-20}) 
\begin{align}\label{eq:loctemp1}
 &\norm{r_{\mathbf{e}}^{{\pperp} - 1/2+\varepsilon/2} 
        r_{\mathbf{v}}^{{\ppar}+\varepsilon} D^{\pperp}_{x_\perp} D^{\ppar}_{x_\parallel}  \nabla U}_{L^2_\alpha((\omegace^{\omegaeps})^{H/4})}^2 
        \nonumber
   \\
&\qquad \leq C_B^{2 {\pperp} - 1 +  \varepsilon + 2{\ppar} + 2\varepsilon} \sum_{i,j} R_i^{2{\ppar}+2\varepsilon} R_{ij}^{2 {\pperp}-1+\varepsilon}
  \norm{D^{\pperp}_{x_\perp} D^{\ppar}_{x_\parallel}  \nabla U}^2_{L^2_\alpha(B_{ij}^{H/4})}. 
\end{align}
With the constant $\gamma>0$ from Corollary~\ref{cor:CaccHighInt}, we abbreviate
\begin{align*}
  \Npperpij(F, f) & \coloneqq 
 \sum_{n=1}^{\pperp}(\gamma\pperp)^{-2n}\left(\max_{\etam=n}\norm{\deta D_{x_\parallel}^{\ppar} f}^2_{L^2(\widehat B_{ij})}
  +\max_{\etam=n-1} \norm{\deta D_{x_\parallel}^{\ppar} F}^2_{L^2_{-\alpha}(\widehat B_{ij} \times (0,H))}\right), \\
  \Npperpi(F, f) & \coloneqq 
 \sum_{n=1}^{\pperp}(\gamma\pperp)^{-2n}\left(\max_{\etam=n}\norm{\deta D_{x_\parallel}^{\ppar} f}^2_{L^2(\widetilde H_{i})}
  +\max_{\etam=n-1} \norm{\deta D_{x_\parallel}^{\ppar} F}^2_{L^2_{-\alpha}(\widetilde H_{i} \times (0,H))}\right).
\end{align*}
Applying the interior Caccioppoli-type estimate (Corollary~\ref{cor:localYCaccioppoli}) for the pairs 
$(B_{ij}\times (0,H/4),\widehat B_{ij}\times (0,H/2))$ and the function $D^{\ppar}_{x_\parallel}U$ (noting that this function satisfies \eqref{eq:extension2D} with data $D^{\ppar}_{x_\parallel}f$, $D^{\ppar}_{x_\parallel}F$) provides (we also use $R_i \leq 1/2 \leq 1$)
\begin{align}\label{eq:loctemp1a}
& \norm{ D^{\pperp}_{x_\perp} \nabla  D^{\ppar}_{x_\parallel} U}_{L^2_\alpha(B_{ij}^{H/4})}^2  \leq 
 2^{{\pperp}}\max_{\betam = {\pperp}} \norm{ \dbeta \nabla D^{\ppar}_{x_\parallel} U}_{L^2_\alpha(B_{ij}^{H/4})}^2 
  \\
\nonumber 
& \qquad \qquad \leq  (\sqrt{2}\gamma \pperp)^{2{\pperp}}R_{ij}^{-2{\pperp}}\bigg(
\norm{\nabla D_{x_\parallel}^{\ppar} U}^2_{L^2_\alpha(\widehat B_{ij}^{H/2})} + R_{ij}^{2}  \Npperpij(F, f)
\bigg) \\
\nonumber 
& \qquad \qquad \stackrel{(\ref{eq:lemma:regularity-omega_ce-20})}{\leq} C_B^{1-\varepsilon} (\sqrt{2}\gamma \pperp)^{2{\pperp}}R_{ij}^{-2{\pperp}+1-\varepsilon}\bigg(
\norm{r^{-1/2+\varepsilon/2}_{{\mathbf e}}\nabla D_{x_\parallel}^{\ppar} U}^2_{L^2_\alpha(\widehat B_{ij}^{H/2})} + R_{ij}^{1+\varepsilon}  \Npperpij(F, f)
\bigg) . 
\nonumber 
\end{align}
Inserting this in (\ref{eq:loctemp1}), summing over all $j$, and  
using the finite overlap property as well as $R_{ij} \leq R_i$ yields 
\begin{align}
\nonumber 
 &\norm{r_{\mathbf{e}}^{{\pperp} - 1/2+\varepsilon/2} 
        r_{\mathbf{v}}^{{\ppar}+\varepsilon} D^{\pperp}_{x_\perp} D^{\ppar}_{x_\parallel} \nabla U}_{L^2_\alpha((\omegace^{\omegaeps})^{H/4})}^2 
\\
\label{eq:loctmp10}
&\lesssim C_B^{2\pperp+2\ppar+2\varepsilon}
(\sqrt{2}\gamma \pperp)^{2\pperp} 
\sum_i R^{2\ppar+ 2\varepsilon}_i 
\left( 
\|r_{\mathbf e}^{-1/2+\varepsilon/2} \nabla D^{\ppar}_{x_\parallel} U\|^2_{L^2_\alpha(\widetilde H_i^{H/2})} + R_i^{1+\varepsilon} \Npperpi(F,f)
\right),
\end{align}
with the implied constant reflecting the overlap constant. 
Using again $R_i \leq 1$, 
we estimate the sum over the $\Npperpi(F,f)$ (generously) 
by
\begin{equation*}
\sum_i R^{2\ppar + 2\varepsilon}_i R^{1+\varepsilon}_i \Npperpi(F,f) \leq C  \sum_{n=1}^{\pperp} (\gamma \pperp)^{-2n} 
\left(\max_{|\eta| = n} \|\partial^\eta_x D^{\ppar}_{x_\parallel} f\|^2_{L^2(\Omega)} + 
      \max_{|\eta| = n-1} \|\partial^\eta_x D^{\ppar}_{x_\parallel} F\|^2_{L^2_{-\alpha}(\Omega \times (0,H))} \right). 
\end{equation*}
The term involving $\|r^{-1/2+\varepsilon/2}_{{\mathbf e}} \nabla D^{\ppar}_{x_\parallel} U\|^2_{L^2_\alpha(\widetilde H_i^{H/2})}$ in 
(\ref{eq:loctmp10}) is treated with Lemma~\ref{lem:estH12D} for the case $\ppar = 0$ and Lemma~\ref{lem:localhighregularity2D} for $\ppar > 0$. 
Considering first the case $\ppar = 0$, we estimate using the finite overlap property of the half-balls $\widehat H_i$ and $r_{\partial\Omega} \leq r_{\mathbf e}$ 
\begin{equation*}
\sum_i R^{2\ppar + 2\varepsilon}_i \|r^{-1/2+\varepsilon/2}_{{\mathbf e}} \nabla D^{\ppar}_{x_\parallel} U \|^2_{L^2_{\alpha}(\widetilde  H_i^{H/2})} 
\stackrel{\text{finite overlap}, \ppar = 0}{\lesssim} \|r^{-1/2+\varepsilon/2}_{\partial\Omega} \nabla U\|^2_{{L^2_{\alpha}(\Omega^+)}} \stackrel{\text{L.~\ref{lem:estH12D}}}{\lesssim}
N^2(U,F,f). 
\end{equation*}
For $\ppar > 0$, we use Lemma~\ref{lem:localhighregularity2D}. To that end, we select, for each $i\in\N$, a cut-off function $\eta_i \in C^\infty_0(\R^2)$ 
with $\operatorname{supp} \eta_i \cap \Omega \subset \widehat H_i$ and $\eta_i \equiv 1$ on $\widetilde H_i$ and a cut-off function $\eta_y \in C_0^\infty(-H,H)$ with $\eta_y \equiv 1$ on $(-H/2,H/2)$. Applying Lemma~\ref{lem:localhighregularity2D} 
with $t = 1/2-\varepsilon/2$ there and using the finite overlap property we get for $\widetilde U^{(\ppar)}_i\coloneqq \eta_i\eta_y D^{\ppar}_{x_\parallel} U$ and 
$\widetilde{N}^{(\ppar)}(F,f)$ from (\ref{eq:Ntilde}) 
\begin{align*}
& \sum_i R^{2\ppar + 2\varepsilon}_i \|r^{-1/2+\varepsilon/2}_{{\mathbf e}} \nabla D^{\ppar}_{x_\parallel} U \|^2_{L^2_{\alpha}(\widetilde H_i^{H/2})}
\leq 
\sum_i R^{2\ppar + 2\varepsilon}_i \|r^{-1/2+\varepsilon/2}_{\partial\Omega} \nabla \widetilde{U}^{(\ppar)}_i \|^2_{L^2_{\alpha}(\widetilde H_i^{H/2})} \\
& \lesssim \sum_i R^{2\ppar+2\varepsilon-2 \ppar - 1 + 2(1/2-\varepsilon/2)}_i (\gamma \ppar)^{2\ppar} (1+ \gamma \ppar) \widetilde{N}^{(\ppar)}(F,f)
 \lesssim (\gamma \ppar)^{2\ppar} (1+ \gamma \ppar) \widetilde{N}^{(\ppar)}(F,f);
\end{align*}
here, we used that $\sum_i R^{\varepsilon}_i < \infty$ by Lemma~\ref{lemma:covering-omega_ce}. 

Combining the above estimates we have shown the existence of $C\geq 1$ independent of
$p = \ppar + \pperp$ such that 
\begin{align*}
 &\norm{r_{\mathbf{e}}^{\pperp - 1/2+\varepsilon/2} 
        r_{\mathbf{v}}^{\ppar+\varepsilon} D^{\pperp}_{x_\perp} D^{\ppar}_{x_\parallel} \nabla U}_{L^2_\alpha((\omegace^{\omegaeps})^{H/4})}^2 
  \\ & \qquad
       \leq C^{2p+1}\left[\pperp^{2\pperp}\ppar^{2\ppar+1}\tNppar(F, f) + \sum_{n=1}^{\pperp}\pperp^{2\pperp-2n} \biggl(\max_{\etam=n}\norm{\deta D_{x_\parallel}^{\ppar} f}^2_{L^2(\Omega)} 
  +\max_{\etam=n-1} \norm{\deta D_{x_\parallel}^{\ppar} F}^2_{L^2_{-\alpha}(\R^2\times (0,H))}\biggr)\right].
\end{align*}
For $\pperp \ge 1$, we estimate with $\pperp \leq p$ 
\begin{align*}
\sum_{n=1}^{\pperp} \pperp^{2(\pperp - n)} \max_{|\eta| = n} \|\partial^\eta_x D^{\ppar}_{x_\parallel} f\|^2_{L^2(\Omega)}
\leq \sum_{n=1}^{\pperp} p^{2(\pperp - n)} \max_{|\eta| = n} \|\partial^\eta_x D^{\ppar}_{x_\parallel} f\|^2_{L^2(\Omega)}
\leq \sum_{j=\ppar+1}^{p} p^{2(p - j)} \max_{|\eta| = j} \|\partial^\eta_x f\|^2_{L^2(\Omega)}
\end{align*}
and analogously for the sum over the terms $\max_{|\eta| = n-1} \|\partial^\eta_x D^{\ppar}_{x_\parallel} F\|^2_{L^2_{-\alpha}(\R^2 \times (0,H))}$. 
Also by similar arguments, we estimate $\ppar^{2\ppar} \tNppar(F,f) \leq C p^{2\ppar} \tNp(F,f)$. 
Using $\ppar+\pperp =  p$ as well as 
$|\Dpar^{\ppar} v | \leq 2^{\ppar/2}\max_{\betam=\ppar} |\dbeta v|$ completes the proof of the vertex-edge case 
in view of the definition of $\tNp(F,f)$ from (\ref{eq:Ntilde}), $r_{\mathbf{e}}^\varepsilon \lesssim r_{\mathbf{e}}^{\varepsilon/2}$, and by suitably selecting $\gamma$.
\end{proof}

\begin{lemma}[Weighted $H^p$-regularity in $\omega_{\mathbf{e}}$]
\label{lemma:regularity-omega_e}
Given $\omegaeps > 0$ and ${\mathbf e} \in {\mathcal E}$, 
there is $\gamma$ depending only on $s$, $\Omega$, $H$,
and $\omega_{\mathbf e} = \omega^{\omegaeps}_{{\mathbf e}}$ such that 
for every $\varepsilon \in (0,1)$ there is $C_\varepsilon>0$ depending additionally on $\varepsilon$ 
such that the solution $U$ of 
(\ref{eq:minimization})
satisfies, for all $(\ppar,\pperp) \in \N^2_0$ with $p = \ppar+\pperp $
\begin{align*}
  &\norm{r_{\mathbf{e}}^{\pperp -1/2+\varepsilon} D^{\pperp}_{x_{\perp}}D^{\ppar}_{x_{\ppar}}   \nabla U}^2_{L^2_\alpha((\omega^{\xi}_{\mathbf{e}})^{H/4})}
  \\ &\ \leq C_\varepsilon \gamma^{2p} p^{2p}\!\left[ \| f\|_{H^1(\Omega)}^2  + \|F\|^2_{L^2_{-\alpha}(\R^2\times (0,H))}   + \sum_{j=2}^{p+1} p^{-2j} \Big(\max_{\etam=j}\norm{\deta f}^2_{L^2(\Omega)} 
  +\max_{\etam=j-1} \norm{\deta F}^2_{L^2_{-\alpha}(\R^2\times(0,H))}\Big) \right].
\end{align*}
\end{lemma}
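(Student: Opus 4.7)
The plan is to mirror the proof of Lemma~\ref{lemma:regularity-omega_ce}, using that on $\omega_{\mathbf{e}} = \omega_{\mathbf{e}}^{\omegaeps}$ the distance $r_{\mathbf{v}}$ to any vertex $\mathbf{v}$ is bounded from below by a positive constant (depending only on $\omegaeps$). Consequently, no vertex weight is required and, in place of the countable vertex-scaled family of half-balls from Lemma~\ref{lemma:covering-omega_ce}, one may cover $\omega_{\mathbf{e}}$ by \emph{finitely many} half-balls $H_k$ of fixed size centered on $\mathbf{e}$, together with slightly enlarged half-balls $\widetilde H_k \subset \Omega$ (still bounded away from the vertices). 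Within each $H_k$, Lemma~\ref{lemma:subcovering-omega_ce} (applied with $M = \mathbf{e}$) then provides a covering by balls $B_{kj}$ of radius $R_{kj} \sim r_{\mathbf{e}}(x_{kj})$ such that the stretched balls $\widehat B_{kj}$ lie in $\widetilde H_k$ and satisfy a finite overlap property together with $\sum_j R_{kj}^{\varepsilon} < \infty$. The case $p=0$ is handled separately by Lemma~\ref{lem:estH12D} and \eqref{eq:CUFf-simplified}, using $r_{\partial\Omega} \leq r_{\mathbf{e}}$ on $\omega_{\mathbf{e}}$.

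For $p \geq 1$ and $(\ppar, \pperp) \in \N_0^2$, I would first apply the interior Caccioppoli estimate of Corollary~\ref{cor:localYCaccioppoli} to the function $D_{x_\parallel}^{\ppar} U$ on the pairs $(B_{kj}, \widehat B_{kj})$, which lie strictly in the interior of $\Omega$. Exactly as in \eqref{eq:loctemp1a}, this yields for each $k,j$
\begin{equation*}
  \|D_{x_\perp}^{\pperp} \nabla D_{x_\parallel}^{\ppar} U\|_{L^2_\alpha(B_{kj}^{H/4})}^2 \lesssim (\sqrt{2}\gamma \pperp)^{2\pperp} R_{kj}^{-2\pperp+1-\varepsilon}\Bigl( \|r_{\mathbf{e}}^{-1/2+\varepsilon/2}\nabla D_{x_\parallel}^{\ppar} U\|_{L^2_\alpha(\widehat B_{kj}^{H/2})}^2 + R_{kj}^{1+\varepsilon} \hNpperp(F,f)\Bigr),
\end{equation*}
with the same data quantity $\hNpperp(F,f)$ as in Lemma~\ref{lemma:regularity-omega_ce}. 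Multiplying by $R_{kj}^{2\pperp-1+\varepsilon}$, using $r_{\mathbf{e}} \sim R_{kj}$ on $\widehat B_{kj}$, and summing over $j$ via the finite overlap property produces, for each fixed $k$,
\begin{equation*}
  \|r_{\mathbf{e}}^{\pperp-1/2+\varepsilon/2} D_{x_\perp}^{\pperp} D_{x_\parallel}^{\ppar}\nabla U\|^2_{L^2_\alpha(H_k^{H/4})} \lesssim (\sqrt{2}\gamma \pperp)^{2\pperp}\Bigl(\|r_{\mathbf{e}}^{-1/2+\varepsilon/2}\nabla D_{x_\parallel}^{\ppar} U\|^2_{L^2_\alpha(\widetilde H_k^{H/2})} + \text{data terms}\Bigr).
\end{equation*}

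The tangential term on $\widetilde H_k$ is then controlled by Lemma~\ref{lem:estH12D} if $\ppar = 0$ and by Lemma~\ref{lem:localhighregularity2D} with $t = 1/2 - \varepsilon/2$ if $\ppar \geq 1$, applied to a cut-off $\eta(x,y)= \eta_k(x)\eta_y(y)$ equal to one on $\widetilde H_k^{H/2}$ and supported in a bounded enlargement of $H_k$ inside $\Omega$. Crucially, since the half-balls $H_k$ have \emph{fixed} size (independent of $p$), the scaling factor $R^{-2\ppar-1+2t}$ in \eqref{eq:lem:localhighregularity2D-10} only contributes a $p$-independent constant, which explains why the final estimate carries the factor $\gamma^{2p}$ rather than $\gamma^{2p+1}$ as in the vertex-edge case. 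Summing over the finitely many $k$ and collecting the data terms via the same majorisation $\pperp^{2(\pperp-n)}\max_{|\eta|=n}\|\partial_x^\eta D_{x_\parallel}^{\ppar}\,\cdot\,\|^2 \leq p^{2(p-j)} \max_{|\eta|=j}\|\partial_x^\eta\,\cdot\,\|^2$ (with $j = n+\ppar$) as at the end of the proof of Lemma~\ref{lemma:regularity-omega_ce} delivers the stated bound.

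The main technical obstacle is the bookkeeping ensuring that the final constant power of $\gamma$ is exactly $2p$ and that the data norms combine into the exact form $\widetilde N^{(p)}(F,f)$ of \eqref{eq:Ntilde}; once the coverings and the applications of the Caccioppoli/shift lemmas are set up, the rest is purely combinatorial.
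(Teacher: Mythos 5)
Your proof is correct and follows essentially the same strategy the paper intends: on $\omega_{\mathbf e}$, balls scaled by $r_{\mathbf e}$ and Corollary~\ref{cor:localYCaccioppoli} handle the normal derivatives, while the tangential derivatives are controlled by a fixed-size application of Lemma~\ref{lem:localhighregularity2D} (resp.\ Lemma~\ref{lem:estH12D} when $\ppar=0$). The paper organizes this via a single-level covering analogous to Lemma~\ref{lemma:covering-omega_c} refined towards $\mathbf e$; your two-level scheme (finitely many fixed half-balls $H_k$, each refined by $r_{\mathbf e}$-scaled balls $B_{kj}$) is an equivalent variant, since the top level is finite and of fixed size. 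One small inaccuracy: the factor $R^{-2\ppar-1+2t}$ from \eqref{eq:lem:localhighregularity2D-10} with fixed $R$ is not a $p$-independent constant, but rather grows like $(R^{-1})^{2\ppar}$; it is nonetheless harmlessly absorbed into $\gamma^{2p}$, and in any case the distinction between $\gamma^{2p}$ and $\gamma^{2p+1}$ is immaterial, since the extra $\gamma$ can always be moved into $C_\varepsilon$.
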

\begin{proof}
The proof is essentially identical to the case $\ppar = 0$ in the proof of Lemma~\ref{lemma:regularity-omega_ce} using a covering of $\omega_{\mathbf{e}}$ 
analogous to the covering of $\omega_{\mathbf v}$ given in
Lemma~\ref{lemma:covering-omega_c} that is refined towards ${\mathbf e}$ rather
than ${\mathbf v}$, see Figure \ref{fig:covering-edge}. 
\end{proof}
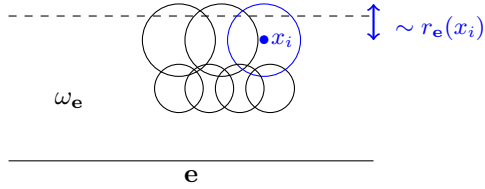
\begin{figure}[ht]
\begin{center} 
\begin{tikzpicture}[scale=0.8]
\draw (3,0) node[below]{$\mathbf{e}$}; 
  \draw[-] (0, -0) -- (6, 0);
  \draw[dashed] (0,2.4) -- (6,2.4); 
  \draw (1,1) node {$\omega_{\mathbf{e}}$}; 

\newcommand{\cirs}{0.6} 
\draw (3.5,2) circle (\cirs);
\draw[blue] (4.2,2) circle (\cirs);
\draw[blue] (4.2,2) node {\footnotesize \textbullet};
\draw[blue] (4.5,2) node {\small $x_{i}$} ; 

    \draw[blue,thick,<->] (6, 2) -- (6, 2+\cirs);
    \draw[blue] (6.2,2.2) node[right] {\small $\sim r_{\mathbf{e}}(x_i)$};
\draw (2.8,2) circle (\cirs);
\newcommand{\cirss}{0.4}
\draw (4.3,1.2) circle (\cirss);
\draw (3.8,1.2) circle (\cirss);
\draw (3.3,1.2) circle (\cirss);
\draw (2.8,1.2) circle (\cirss);
\end{tikzpicture}
\end{center}
\caption{\label{fig:covering-edge} Covering of  edge-neighborhoods
  $\omega_{\mathbf{e}}$. 
  }
\end{figure}

  \begin{remark} \label{remark:omegace-decomp}
  The assumption that $\omegaeps$ is sufficiently small in Lemma
  \ref{lemma:regularity-omega_ce} can be dropped (as long as $\omegace$ is
  well defined, as per Section \ref{sec:WgtAnRegR2}).
  Indeed, for all $\omegaeps_1, \omegaeps_2$ such that $\omegaeps_1\geq \omegaeps_2>0$ there exists $\omegaeps_3\geq
  \omegaeps_2$ such that
  \begin{equation}
    \label{eq:omegace-decomp}
   \omegace^{\omegaeps_1}  \subset \left( \omegace^{\omegaeps_2} \cup \omegac^{\omegaeps_3}\cup \omegae^{\omegaeps_3}\right).
  \end{equation}
  In addition, there exists a constant $C_{\omegaeps_3} > 0$ that depends only on
  $\omegaeps_3$ and $\varepsilon$ such that
  \begin{equation}
    \label{eq:omegac-decomp-equiv}
    \begin{aligned}
    \|r_{\mathbf{e}}^{\pperp-1/2+\varepsilon} r_{\mathbf{v}}^{\ppar+\varepsilon}D^{\pperp}_{x_{\perp}}D^{\ppar}_{x_{\ppar}}\nabla U\|_{L^2_{\alpha}((\omegac^{\omegaeps_3})^+)}^2
    & \leq 2^{p} \max_{\betam = p}
    \|r_{\mathbf{e}}^{\pperp-1/2+\varepsilon} r_{\mathbf{v}}^{\ppar+\varepsilon}\dbeta \nabla U\|_{L^2_{\alpha}((\omegac^{\omegaeps_3})^+)}^2
    \\ & \leq
    C_{\omegaeps_3}^{p+1}
    \max_{\betam = p}
    \|r_{\mathbf{v}}^{p-1/2+\varepsilon} \dbeta \nabla U\|_{L^2_{\alpha}((\omegac^{\omegaeps_3})^+)}^2
    \end{aligned}
  \end{equation}
  and that 
  \begin{equation}
    \label{eq:omegae-decomp-equiv}
    \|r_{\mathbf{e}}^{\pperp-1/2+\varepsilon} r_{\mathbf{v}}^{\ppar+\varepsilon}D^{\pperp}_{x_{\perp}}D^{\ppar}_{x_{\ppar}}\nabla U\|_{L^2_{\alpha}((\omegae^{\omegaeps_3})^+)}^2
    \leq C_{\omegaeps_3}^{p+1}
    \norm{r_{\mathbf{e}}^{\pperp -1/2+\varepsilon} D^{\pperp}_{x_{\perp}}D^{\ppar}_{x_{\ppar}}   \nabla U}^2_{L^2_{\alpha}((\omegae^{\omegaeps_3})^+)}.
  \end{equation}
  Given $\omegaeps_1>0$, bounds in $\omegace^{\omegaeps_1}$ can therefore be derived by choosing
  $\omegaeps_2$ such that Lemma \ref{lemma:regularity-omega_ce} holds in
  $\omegace^{\omegaeps_2}$, exploiting the decomposition
  \eqref{eq:omegace-decomp}, using Lemmas
  \ref{lemma:regularity-omega_c} and \ref{lemma:regularity-omega_ce} in
  $\omegac^{\omegaeps_3}$ and $\omegae^{\omegaeps_3}$, respectively, and
  concluding with \eqref{eq:omegac-decomp-equiv} and \eqref{eq:omegae-decomp-equiv}.
\eremk
  \end{remark}

\subsection{Proof of Theorem~\ref{thm:mainresult} -- weighted $H^p$ regularity for fractional PDE \eqref{eq:intro-eq}}
In order to obtain regularity estimates for the solution $u$ of $(-\Delta)^s u = f$, we have to take the trace 
$y\rightarrow 0$ in the weighted $H^p$-estimates for the Caffarelli-Silvestre extension problem provided by the previous subsection.

\begin{proposition}\label{pro:mainresultOLD}
Under the hypotheses of Theorem~\ref{thm:mainresult}, there exists a constant 
$\gamma>0$ depending only on $\gamma_f$, $s$, and $\Omega$ such that for 
every $\varepsilon>0$ there exists $C_\varepsilon>0$  
(depending only on $\varepsilon$ and $\Omega$) such that the following holds: 
\begin{enumerate}[(i)]
\item for all $p \in \N$ there holds 
\begin{subequations}
\label{eq:analytic-u-ceOLD}
\begin{equation}  \label{eq:analytic-u-ce1OLD}
\norm{r_{\mathbf{e}}^{-1/2 + \varepsilon} r_{\mathbf{v}} ^{p-s+\varepsilon} D^{p}_{x_\parallel} u}_{L^2(\omega_{\mathbf{v}\mathbf{e}})} 
\leq 
C_{\varepsilon}\gamma^{p+1} p^p.
\end{equation}
\item
For all $\ppar \in \N_0$, $\pperp \in \N$ with $\ppar+\pperp=p$ there holds 
\begin{equation}  \label{eq:analytic-u-ce2OLD}
\norm{r_{\mathbf{e}} ^{\pperp-1/2-s + \varepsilon} r_{\mathbf{v}}^{\ppar+\varepsilon} 
    D^{\pperp}_{x_\perp} D^{\ppar}_{x_\parallel} u}_{L^2(\omega_{\mathbf{v}\mathbf{e}})} 
\leq C_{\varepsilon} \gamma^{p+1} p^p.
\end{equation}
\end{subequations}
\item 
For all $\beta\in \N_0^2$ with $\betam = p \ge 1$ and all $\ppar \in \N_0$, $\pperp \in \N$ with $\ppar + \pperp = p \ge 1$ there holds 
\begin{align}
  \label{eq:analytic-u-cOLD}
 \norm{r_{\mathbf{v}}^{p-1/2-s+\varepsilon} \dbeta u }_{L^2(\omega_{\mathbf{v}})} \leq C_{\varepsilon} \gamma^{p+1}p^p, \\
  \label{eq:analytic-u-eOLD}
 \norm{r_{\mathbf{e}}^{\pperp-1/2-s+\varepsilon} D^{\pperp}_{x_\perp} D^{\ppar}_{x_{\parallel}} u }_{L^2(\omega_{\mathbf{e}})} \leq C_{\varepsilon}\gamma^{p+1}p^p.
\end{align}
\item For $\ppar \in \N$, we have 
\begin{align}
  \label{eq:analytic-u-fOLD}
 \norm{r_{\mathbf{e}}^{-1/2+\varepsilon} D^{\ppar}_{x_{\parallel}} u }_{L^2(\omega_{\mathbf{e}})} \leq C_{\varepsilon}\gamma^{p+1}p^p.
\end{align}
\item
For all $\beta\in \N_0^2$ there holds with $\betam = p$
\begin{equation}
  \label{eq:analytic-u-intOLD}
 \norm{\dbeta u }_{L^2(\Omega_{\rm int})} \leq \gamma^{p+1}p^p.
\end{equation}
\end{enumerate}
\end{proposition}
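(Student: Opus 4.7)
The plan is to lift the weighted interior estimates for the Caffarelli--Silvestre extension $U$ (Lemmas~\ref{lemma:regularity-omega_c}, \ref{lemma:regularity-omega_ce}, \ref{lemma:regularity-omega_e}, and Corollary~\ref{cor:CaccHighInt}) to corresponding bounds for $u=\operatorname{tr} U$ through a scale-tunable weighted trace inequality, and then to absorb the resulting data factor $\tNp(0,f)$ into the geometric prefactor using the analyticity assumption on $f$.

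First I would establish the quantitative trace estimate: for every ball $B\subset\R^2$, every $R\in(0,H]$ and every $w\in H^1_\alpha(B\times(0,R))$,
\begin{equation*}
\|w(\cdot,0)\|^2_{L^2(B)} \leq C\bigl(R^{2s-2}\|w\|^2_{L^2_\alpha(B\times(0,R))} + R^{2s}\|\partial_y w\|^2_{L^2_\alpha(B\times(0,R))}\bigr),
\end{equation*}
obtained by writing $w(x,0)=-\int_0^R\partial_y(\chi(y)w(x,y))\,dy$ for a smooth $y$-cutoff of length $R$, applying Cauchy--Schwarz with the weight $y^\alpha$, and using $1-\alpha=2s$. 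Combined with the elementary bound $\|\dbeta U\|_{L^2_\alpha(B^+)}\leq\|\partial_x^{\beta-e_i}\nabla U\|_{L^2_\alpha(B^+)}$ for any $i$ with $\beta_i\geq 1$ (factoring $\partial_{x_i}$ out of $\dbeta U$), this yields
\begin{equation*}
\|\dbeta u\|^2_{L^2(B)} \lesssim R^{2s-2}\|\partial_x^{\beta-e_i}\nabla U\|^2_{L^2_\alpha(B\times(0,R))} + R^{2s}\|\dbeta\nabla U\|^2_{L^2_\alpha(B\times(0,R))},
\end{equation*}
i.e.\ only first-order $\nabla U$ terms appear on the right, at orders $|\beta|-1$ and $|\beta|$. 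The analogous formula for $D_{x_\perp}^{\pperp}D_{x_\parallel}^{\ppar}u$ follows by factoring out $D_{x_\perp}$ when $\pperp\geq 1$ or $D_{x_\parallel}$ when $\pperp=0$.

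For each item of the proposition I would then select a covering together with a value of $R$ in the trace inequality so that the $R$-factors, the target weight, and the covering radii reassemble into the norms supplied by the extension lemmas. For the vertex case \eqref{eq:analytic-u-cOLD}, the covering of Lemma~\ref{lemma:covering-omega_c} with $R_i\sim r_{\mathbf v}$ and the choice $R=R_i$ encodes the ``$-s$'' in the weight via the prefactors $R_i^{2s-2}$ and $R_i^{2s}$, and the residual powers match Lemma~\ref{lemma:regularity-omega_c} at orders $p-1$ and $p$. The same choice $R=R_{ij}\sim r_{\mathbf e}$ on the subcovering of Lemma~\ref{lemma:subcovering-omega_ce} (respectively on the edge covering of Figure~\ref{fig:covering-edge}) handles \eqref{eq:analytic-u-ce2OLD} (respectively \eqref{eq:analytic-u-eOLD}) for $\pperp\geq 1$ via Lemma~\ref{lemma:regularity-omega_ce} (respectively \ref{lemma:regularity-omega_e}) at orders $(\pperp-1,\ppar)$ and $(\pperp,\ppar)$. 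The interior estimate \eqref{eq:analytic-u-intOLD} reduces to a finite covering by balls at positive distance from $\partial\Omega$, Corollary~\ref{cor:CaccHighInt}, and the trace inequality at $R=H/4$.

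The tangential-only cases \eqref{eq:analytic-u-fOLD} and \eqref{eq:analytic-u-ce1OLD} are the delicate ones, and I expect this to be the main obstacle: the trace scale must be decoupled from the covering scale, otherwise the $R^{2s}$ factor either lands on the wrong weight or would force a negative $\varepsilon$ in the applied extension lemma. For \eqref{eq:analytic-u-fOLD} I would cover $\omega_{\mathbf e}$ with balls of radius $R_i\sim r_{\mathbf e}$ but take $R=H/4$ fixed in the trace, so the $R$-prefactors become absolute constants and the weight $r_{\mathbf e}^{-1/2+\varepsilon}$ survives untouched on both sides; Lemma~\ref{lemma:regularity-omega_e} at orders $\ppar-1$ and $\ppar$ then closes the estimate. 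For \eqref{eq:analytic-u-ce1OLD} I would use the subcovering of Lemma~\ref{lemma:subcovering-omega_ce} but apply the trace with the \emph{coarser} radius $R_i\sim r_{\mathbf v}$ of the half-balls $H_i$ of Lemma~\ref{lemma:covering-omega_ce}: the factors $R_i^{2s-2}$ and $R_i^{2s}$ now act on the $r_{\mathbf v}$-power, producing the ``$-s$'' on the vertex weight while preserving $r_{\mathbf e}^{-1/2+\varepsilon}$, and the result matches Lemma~\ref{lemma:regularity-omega_ce} with $\pperp=0$ at orders $\ppar-1$ and $\ppar$. Finally, the analyticity assumption $\sum_{\etam=j}\|\deta f\|_{L^2(\Omega)}\leq\gamma_f^{j+1}j^j$ gives $\tNp(0,f)\leq C\gamma_1^{2p}$ (the factor $(j/p)^{2j}$ is bounded by $e^2$ for $j\leq p+1$, and the remaining geometric series in $\gamma_f^{2j}$ sums to $\gamma_1^{2p}$), which combines with the $\gamma^{2p+1}p^{2p}$ prefactor from the extension lemmas into the claimed bound $\widetilde\gamma^{p+1}p^p$ after taking square roots.
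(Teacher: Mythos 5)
Your proposal is correct, but it replaces the paper's key trace step with a genuinely different one, so a comparison is warranted. The paper takes the trace via the pointwise \emph{multiplicative} estimate of \cite[Lem.~3.7]{KarMel19},
\[
|V(x,0)|^2 \le C_{\rm tr}\Bigl(\|V(x,\cdot)\|_{L^2_\alpha(\Rpos)}^{1-\alpha}\,\|\partial_y V(x,\cdot)\|_{L^2_\alpha(\Rpos)}^{1+\alpha}+\|V(x,\cdot)\|^2_{L^2_\alpha(\Rpos)}\Bigr),
\]
applied with $V=D_{x_\perp}^{\pperp}D_{x_\parallel}^{\ppar}U$. Since $(1-\alpha)/2=s$, the weight $r_\bullet^{-2s}$ on the left can be written as $(r_\bullet^{-2})^{(1-\alpha)/2}(r_\bullet^{0})^{(1+\alpha)/2}$ and distributed by H\"older onto the two factors, which then match the extension lemma at orders $p-1$ and $p$; the same distribution applied to $r_{\mathbf v}$ rather than $r_{\mathbf e}$ handles the $\pperp=0$ case. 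You instead derive by an elementary fundamental-theorem-of-calculus argument the \emph{additive} trace inequality $\|w(\cdot,0)\|^2_{L^2(B)}\lesssim R^{2s-2}\|w\|^2_{L^2_\alpha(B\times(0,R))}+R^{2s}\|\partial_y w\|^2_{L^2_\alpha(B\times(0,R))}$ with a free scale $R$, and you encode the $-s$ shift in the weight by choosing $R$ proportional to the distance that carries the shift ($r_{\mathbf v}$, $r_{\mathbf e}$, or a fixed constant). This forces a case-by-case choice of $R$ — which you correctly identify as the delicate point, in particular decoupling the trace scale $R_i\sim r_{\mathbf v}$ from the covering scale $R_{ij}\sim r_{\mathbf e}$ for \eqref{eq:analytic-u-ce1OLD}, and taking $R$ constant for \eqref{eq:analytic-u-fOLD} — whereas the paper's multiplicative estimate gets the shift ``for free'' and yields one uniform computation. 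Your route is more elementary (no citation of a one-dimensional interpolation-type trace result) and makes the mechanism visible; the paper's route is shorter once the multiplicative estimate is available. Both reduce to the same extension lemmas (Lemmas~\ref{lemma:regularity-omega_c}--\ref{lemma:regularity-omega_e}) and the same data bound $\tNp(0,f)\lesssim\gamma_1^{2p}$ under \eqref{eq:analyticdata}. Two small points: the claimed bound $(j/p)^{2j}\le e^2$ for $j\le p+1$ fails for small $p$ (e.g.\ $p=1$, $j=2$ gives $16$), though the factor is certainly bounded by an absolute constant, which is all you need; and you should cap $R$ at $H/4$ so the cylinders $B\times(0,R)$ stay inside the region where the extension lemmas give control, absorbing the finitely many large-$R_i$ balls into the constant.
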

\begin{proof}

We only show the estimates \eqref{eq:analytic-u-ce1OLD} and \eqref{eq:analytic-u-ce2OLD} using Lemma~\ref{lemma:regularity-omega_ce}.
The bounds \eqref{eq:analytic-u-cOLD} 
(using Lemma~\ref{lemma:regularity-omega_c}) 
and \eqref{eq:analytic-u-eOLD}, \eqref{eq:analytic-u-fOLD} 
(using Lemma~\ref{lemma:regularity-omega_e}) follow with identical arguments. 
The bound in $\Omega_{\rm int}$ follows directly from the interior Caccioppoli inequality, Corollary~\ref{cor:CaccHighInt}, 
and a trace estimate as below. 
(Note that the case $|\beta| = 0$ follows directly from the energy estimate
$\|u\|_{L^2(\Omega_{\rm int})} \leq \|u\|_{\widetilde{H}^s(\Omega)} \leq C \|f\|_{H^{-s}(\Omega)}$.)
\medskip

Due to Lemma~\ref{lemma:regularity-omega_ce}, applied with $F=0$, and the assumption \eqref{eq:analyticdata} on the data $f$, there
exists a constant $C>0$ such that for all $(\qperp,\qpar)\in \N^2_0$ we have with $q = \qperp+\qpar \in \N_{0}$ 
\begin{equation}
\label{eq:analytic-nablaU-ce}
\norm{r_{\mathbf{e}}^{\qperp -1/2+\varepsilon}r_{\mathbf{v}}^{\qpar +\varepsilon} 
D^{\qperp}_{x_\perp} D^{\qpar}_{x_\parallel} \nabla U}_{L^2_\alpha(\omegace^{H/4})}^2 
\leq C^{2q+1}q^{2q}.
\end{equation}
The last step of the proof of \cite[Lem.~3.7]{KarMel19} gives the multiplicative trace estimate 
\begin{align}
\label{eq:L3.7-KarMel19}
 \abs{V(x,0)}^2 
\leq 
C_{\mathrm{tr}} \left( 
\norm{V(x,\cdot)}_{L^2_\alpha(\Rpos)}^{1-\alpha}\norm{\partial_y V(x,\cdot)}_{L^2_\alpha(\Rpos)}^{1+\alpha} 
+
\|V(x,\cdot)\|^2_{L^2_\alpha(\Rpos)}
\right), 
\;\; x\in \Omega, 
\end{align}
where, for univariate $v:\Rpos\rightarrow\R$, 
we write $\|v\|^2_{L^2_\alpha(\Rpos)}\coloneqq \int_{y=0}^\infty y^\alpha |v(y)|^2\,dy$.
Applying this estimate to $\eta_y V$ with a cut-off function $\eta_y \in C^{\infty}_0(-H/4,H/4)$ satisfying $\eta_y = 1$ on $(-H/8,H/8)$ shows that in (\ref{eq:L3.7-KarMel19})
$\R_+$ can be replaced by $(0,H/4)$ at the expense of a constant depending additionally on $H$.

We have $p = \pperp + \ppar \ge 1$. 
Suppose first $\pperp\geq 1$ and $\ppar \ge 0$. 
Using the trace estimate (\ref{eq:L3.7-KarMel19}) with $V = D^{\pperp}_{x_\perp} D^{\ppar}_{x_\parallel} U$ 
and additionally multiplying with the corresponding weight (using that $\alpha = 1-2s$) 
provides
\begin{align*}
 &r_{\mathbf{e}}^{2\pperp -1 -2s+2\varepsilon} r_{\mathbf{v}}^{2\ppar +
   2\varepsilon}\abs{D^{\pperp}_{x_\perp} D^{\ppar}_{x_\parallel} U(x,0)}^2
\\ &\qquad\quad\leq C_{\mathrm{tr}} 
 \norm{r_{\mathbf{e}}^{\pperp -3/2 +\varepsilon} r_{\mathbf{v}}^{\ppar+\varepsilon}\nabla D^{\pperp-1}_{x_\perp}D^{\ppar}_{x_\parallel} U(x,\cdot)}_{L^2_\alpha(0,H/4)}^{1-\alpha}
 \norm{r_{\mathbf{e}}^{\pperp -1/2 +\varepsilon} r_{\mathbf{v}}^{\ppar + \varepsilon}D_{x_\perp}^{\pperp} D^{\ppar}_{x_\parallel}\nabla  U(x,\cdot)}_{L^2_\alpha(0,H/4)}^{1+\alpha} \\
 &\qquad\quad\quad+C_{\mathrm{tr}} 
 \norm{r_{\mathbf{e}}^{\pperp -1/2-s +\varepsilon} r_{\mathbf{v}}^{\ppar+\varepsilon}\nabla D^{\pperp-1}_{x_\perp}D^{\ppar}_{x_\parallel} U(x,\cdot)}_{L^2_\alpha(0,H/4)}^{2},
\end{align*}
where we have also used the fact that $(D_{x_\perp} v)^2  =
  (\mathbf{e}_\perp \cdot \nabla_x v)^2\leq | \nabla_x
  v|^2$ for all sufficiently smooth functions $v$.
Integration over $\omega_{\mathbf{v}\mathbf{e}}$ together with $r_{\mathbf{e}}^{-s}\lesssim r_{\mathbf{e}}^{-1}$ gives 
\begin{align*}
  &\norm{r_{\mathbf{e}}^{\pperp -1/2 -s+\varepsilon} r_{\mathbf{v}}^{\ppar + \varepsilon}D^{\pperp}_{x_\perp} D^{\ppar}_{x_\parallel}  u}_{L^2(\omega_{\mathbf{v}\mathbf{e}})}^2 \\
 &\qquad\quad \leq C_{\mathrm{tr}} 
  \norm{r_{\mathbf{e}}^{\pperp -3/2 +\varepsilon} r_{\mathbf{v}}^{\ppar+\varepsilon}D^{\pperp-1}_{x_\perp}D^{\ppar}_{x_\parallel} \nabla U}_{L^2_\alpha(\omega_{\mathbf{v}\mathbf{e}}^{H/4})}^{1-\alpha}
 \norm{r_{\mathbf{e}}^{\pperp -1/2 +\varepsilon} r_{\mathbf{v}}^{\ppar + \varepsilon}D_{x_\perp}^{\pperp} D^{\ppar}_{x_\parallel}\nabla  U}_{L^2_\alpha(\omega_{\mathbf{v}\mathbf{e}}^{H/4})}^{1+\alpha} \\
  &\qquad\quad\quad + C_{\mathrm{tr}} \norm{r_{\mathbf{e}}^{\pperp -1/2-s +\varepsilon} r_{\mathbf{v}}^{\ppar+\varepsilon}D^{\pperp-1}_{x_\perp}D^{\ppar}_{x_\parallel} \nabla U}_{L^2_\alpha(\omega_{\mathbf{v}\mathbf{e}}^{H/4})}^{2} \\
 &\qquad\quad\stackrel{\eqref{eq:analytic-nablaU-ce}}{\leq} C_{\mathrm{tr}}  (C^{2p-1}(p-1)^{2(p-1)})^{(1-\alpha)/2} (C^{2p+1}p^{2p})^{(1+\alpha)/2} + CC_{\mathrm{tr}}C^{2p-1}(p-1)^{2(p-1)}  \\ 
 &\qquad\quad\leq  C_{\mathrm{tr}}C^{2p+1+\alpha}p^{2p+\alpha}+C_{\mathrm{tr}}C^{2p-1}p^{2p} \leq  \gamma^{2p+1}p^{2p}
\end{align*}
for suitable $\gamma > 0$,
which is estimate \eqref{eq:analytic-u-ce2OLD}.
If $\pperp=0$, then $\ppar \ge 1$, and we have instead
\begin{align*}
 & \norm{r_{\mathbf{e}}^{-1/2 +\varepsilon} r_{\mathbf{v}}^{\ppar -s + \varepsilon} D^{\ppar}_{x_\parallel}  u}_{L^2(\omega_{\mathbf{v}\mathbf{e}})}^2 \\
  &\qquad\quad \leq C_{\mathrm{tr}} 
  \norm{r_{\mathbf{e}}^{ -1/2 +\varepsilon} r_{\mathbf{v}}^{\ppar-1+\varepsilon}\nabla D^{\ppar-1}_{x_\parallel} U}_{L^2_\alpha(\omega_{\mathbf{v}\mathbf{e}}^{H/4})}^{1-\alpha}
 \norm{r_{\mathbf{e}}^{ -1/2 +\varepsilon} r_{\mathbf{v}}^{\ppar + \varepsilon} D^{\ppar}_{x_\parallel}\nabla  U}_{L^2_\alpha(\omega_{\mathbf{v}\mathbf{e}}^{H/4})}^{1+\alpha} \\
 &\qquad\quad\quad + C_{\mathrm{tr}} 
  \norm{r_{\mathbf{e}}^{ -1/2 +\varepsilon} r_{\mathbf{v}}^{\ppar-s+\varepsilon}\nabla D^{\ppar-1}_{x_\parallel} U}_{L^2_\alpha(\omega_{\mathbf{v}\mathbf{e}}^{H/4})}^{2}.
\end{align*}
Again, inserting \eqref{eq:analytic-nablaU-ce} into the right-hand side and proceeding similarly as above 
proves \eqref{eq:analytic-u-ce1OLD}.
\end{proof}

We now apply Proposition~\ref{pro:mainresultOLD} to show our main result.

\begin{proof}[Proof of Theorem~\ref{thm:mainresult}]
Proposition~\ref{pro:mainresultOLD} already covers most of the statements in Theorem~\ref{thm:mainresult}. Only some lowest-order cases $p=0$ or $\pperp=0$ are missing.
  We consider the three inequalities 
  \eqref{eq:analytic-u-c-all}, 
  \eqref{eq:analytic-u-e-all}, and
  \eqref{eq:analytic-u-ce-all}
separately by using a Hardy inequality and then appealing to Proposition~\ref{pro:mainresultOLD}.
  \paragraph{\textbf{Proof of \eqref{eq:analytic-u-c-all}}}
  Equation \eqref{eq:analytic-u-c-all} with $p=0$ follows from the weighted
 Hardy inequality \cite[Lem.~{7.1.3}]{Kozlov1997}, which provides 
   \begin{equation*}
     \| r_{\mathbf{v}}^{-1/2-s+\varepsilon} u \|_{L^2(\omega_\mathbf{v})} \leq C_{\mathrm{H},1} \| 
 r_{\mathbf{v}}^{1/2-s+\varepsilon} \nabla u \|_{L^2(\omega_\mathbf{v})} 
\stackrel{\text{Prop.~\ref{pro:mainresultOLD}}}{<} \infty.  
   \end{equation*}
  \paragraph{\textbf{Proof of \eqref{eq:analytic-u-e-all}}}
Let $(x_{\perp}, x_{\parallel})$ be the coordinate system associated with edge ${\mathbf e}$. For $\mu$, $\xi > 0$ sufficiently small 
and an interval $I_\mu$ of length $\mu$ consider 
  \begin{equation*}
    \omegae^{\xi} \subseteq \{(x_\perp, x_{\parallel}) : x_{\parallel}\in I_\mu, x_{\perp}\in (0,\xi^2)\} =: \widetilde\omega_{\mathbf{e}}^{\xi,\mu}.
  \end{equation*}
The interval $I_\mu$ is chosen such that $\omega^\xi _{{\mathbf e}} \subset \widetilde\omega_{\mathbf{e}}^{\xi,\mu}$  and 
$\widetilde \omega^{\xi,\mu}_{{\mathbf e}}$ stays away from the vertices ${\mathcal V}$ and the edges ${\mathcal E}\setminus\{{\mathbf e}\}$
so that the assertions of Proposition~\ref{pro:mainresultOLD} still hold for $\widetilde{\omega}^{\xi,\mu}_{{\mathbf e}}$ -- cf.\ Remark~\ref{remark:omegace-decomp}. 
We will show \eqref{eq:analytic-u-e-all} for $\widetilde\omega_{{\mathbf e}}$ (dropping the superscripts $\xi$, $\mu$). 

  Let $\widetilde u$ be the
  function such that $\widetilde u(x_{\perp}, x_{\parallel}) = u(x_1,x_2 )$ in $\widetilde\omega_{\mathbf{e}}$.
  By Fubini-Tonelli's theorem, for almost all $x_{\parallel}\in I_\mu$, there holds
     \begin{equation}
       \label{eq:u-e-LinfL2}
  \bigg(  x_{\perp } \mapsto r_{\mathbf{e}}^{1/2-s+\epsilon}D_{x_\perp}(D^{\ppar}_{x_{\parallel}}\widetilde u )(x_{\perp}, x_{\parallel}) \bigg) \in L^2((0,\xi^2)).
     \end{equation}
The fundamental theorem of calculus, the Cauchy-Schwarz inequality, and 
\eqref{eq:u-e-LinfL2}, imply that, for almost all $x_\parallel\in I_\mu$, one has 
for $\varepsilon < s$ that 
$(D^{\ppar}_{x_\parallel}\widetilde u )(\cdot, x_{\parallel})\in C^{0, s-\varepsilon}([0,\xi^2])$. 
As $u \in \widetilde{H}^s(\Omega)$, we infer the pointwise equality 
$(D^{\ppar}_{x_\parallel}\widetilde u) (0, x_{\parallel})=0$ for almost all $x_{\parallel}$.
We can apply \cite[Lem.~{7.1.3}]{Kozlov1997} again, in one dimension: for
almost all $x_\parallel\in I_\mu$, there holds
\begin{equation*}
   \|
  r_{\mathbf{e}}^{-1/2-s+\varepsilon}(D^{\ppar}_{x_{\parallel}}\widetilde u )(\cdot, x_{\parallel}) 
  \|_{L^2((0, \xi^2))}
  \leq C_{\mathrm{H},2}
   \|
  r_{\mathbf{e}}^{1/2-s+\varepsilon}(D_{x_\perp}D^{\ppar}_{x_{\parallel}}\widetilde u )(\cdot, x_{\parallel}) 
  \|_{L^2((0,\xi^2))}.
\end{equation*}
Squaring and integrating over $x_\parallel\in I_\mu$ concludes the proof of \eqref{eq:analytic-u-e-all}.
\paragraph{\textbf{Proof of \eqref{eq:analytic-u-ce-all}}}
We use the same notation as in the previous part of the proof, but assume that the coordinate system $(x_1,x_2)$ and 
the coordinate system $(x_\perp,x_\parallel)$ associated with edge ${\mathbf e}$ satisfy $x_1 = x_\parallel$ and $x_2 = x_\perp$. 
Correspondingly, we assume  $I_\mu = (0,\mu)$. We introduce the equivalent vertex-edge neighborhood
\begin{equation*}
  \widetilde\omega_{\mathbf{ve}}^{\xi,\mu} = \{(x_\perp, x_\parallel) :  x_\parallel \in (0,\mu), x_{\perp} \in (0, \xi x_\parallel)\}.
\end{equation*}
We remark that in $\widetilde\omega_{\mathbf{ve}}$ there exists $c\geq 1$ such that for all $(x_\perp,
x_\parallel )\in \widetilde\omega_{\mathbf{ve}}$
\begin{equation}
  \label{eq:rv-xpar-equiv}
  x_{\parallel} \leq r_{\mathbf{v}}(x_{\parallel}, x_\perp) \leq c x_{\parallel}.
\end{equation}
We note $r_{{\mathbf e}}(x_\perp, x_\parallel) = x_\perp$. 
Hence, for almost all $x_\parallel\in (0,\mu)$, there holds
\begin{equation}
       \label{eq:u-ce-LinfL2}
  \bigg(  x_{\perp } \mapsto r_{\mathbf{e}}^{1/2-s+\varepsilon}(D_{x_\perp}(D^{\ppar}_{x_{\parallel}}\widetilde u ))(x_{\perp}, x_{\parallel}) \bigg) \in L^2((0, \xi x_\parallel)).
     \end{equation}
     By the same argument as above, it follows that, for almost all $x_\parallel \in (0, \mu)$, we have 
     $(D_{x_\parallel}^{\ppar}\widetilde u)(\cdot, x_\parallel)\in C^{0, s-\varepsilon}([0,
     \xi x_\parallel])$ and hence $(D_{x_\parallel}^{\ppar}\widetilde u)(0, x_\parallel)= 0$.
     Therefore,
     \cite[Lem.~{7.1.3}]{Kozlov1997} gives  for almost all $x_\parallel\in(0,\mu)$
     \begin{align*}
       \|
       r_{\mathbf{e}}^{-1/2-s+\varepsilon}(D^{\ppar}_{x_{\parallel}}\widetilde u) (\cdot, x_\parallel)
       \|_{L^2((0, \xi x_{\parallel}))}
       \leq C_{\mathrm{H},3}
       \|
       r_{\mathbf{e}}^{1/2-s+\varepsilon}(D_{x_\perp}D^{\ppar}_{x_{\parallel}}\widetilde u)(\cdot, x_\parallel) 
       \|_{L^2((0, \xi x_{\parallel}))},
     \end{align*}
     with a constant $C_{\mathrm{H},3}$ independent of $x_\parallel$.
     Multiplying by $r_{\mathbf{v}}^{\ppar+\varepsilon}$, squaring, integrating over $x_\parallel\in (0,\mu)$, and using \eqref{eq:rv-xpar-equiv}, we obtain
     \begin{align*}
       \|
       r_{\mathbf{e}}^{-1/2-s+\varepsilon}r_{\mathbf{v}}^{\ppar+\varepsilon}D^{\ppar}_{x_{\parallel}}\widetilde u 
       \|_{L^2(\widetilde\omega_{\mathbf{ve}})}
       \leq
       c^{\ppar+\varepsilon} C_{\mathrm{H},3}
       \|
       r_{\mathbf{e}}^{1/2-s+\varepsilon}r_{\mathbf{v}}^{\ppar+\varepsilon}D_{x_\perp}D^{\ppar}_{x_{\parallel}}\widetilde u 
       \|_{L^2(\widetilde\omega_{\mathbf{ve}})}.
     \end{align*} 
     This completes the proof except for the fact that the region $\omega_{{\mathbf v}{\mathbf e}}\setminus \widetilde{\omega}_{{\mathbf v}{\mathbf e}}$ 
is not covered yet. This region is treated with the observations of Remark~\ref{remark:omegace-decomp}.  
\end{proof}


%
\section{Conclusions}
\label{sec:Concl}
We briefly recapitulate the principal findings of the present paper, 
outline generalizations of the present results, and also indicate 
applications to the numerical analysis of finite element approximations of \eqref{eq:modelproblem}.
We established
analytic regularity of the solution $u$ in a scale of edge- and vertex-weighted Sobolev spaces
for the Dirichlet problem for the fractional Laplacian in a bounded polygon $\Omega\subset\R^2$ with
straight sides, and for forcing $f$ analytic in $\overline{\Omega}$.

While the analysis in Sections \ref{sec:LocTgReg} and \ref{sec:WghHpPolygon}
was developed at present in two spatial dimensions,
we emphasize that all parts of the proof can be extended to higher spatial dimension $d\geq 3$,
and polytopal domains $\Omega\subset \R^d$. Details shall be presented elsewhere.

Likewise, the present approach is also capable of handling nonconstant, analytic
coefficients similar to the setting considered (for the spectral fractional Laplacian)
in \cite{BMNOSS19}. 
Details on this extension of the present results, with the 
presently employed techniques, will also be developed in forthcoming work.

The weighted analytic regularity results obtained in the present paper can be used to establish 
\emph{exponential convergence rates} with the bound $C\exp(-b\sqrt[4]{N})$
on the error for suitable $hp$-Finite Element discretizations of \eqref{eq:modelproblem}, 
with $N$ denoting the number of degrees of freedom of the discrete solution in
$\Omega$. This will be proved in the follow-up work \cite{FMMS-hp}.
Importantly, as already observed in \cite{BMNOSS19},
achieving this exponential rate of convergence mandates
\emph{anisotropic mesh refinements} near the boundary $\partial\Omega$.
%
\appendix
\section{Localization of Fractional Norms}
\label{app:LocFrac}
The following elementary observation on localization of fractional norms 
was used in several places.
\begin{lemma}
\label{lemma:localization-fractional-norms}
Let $\eta \in C^\infty_0(B_R)$ for some ball $B_R\subset \Omega$ of radius $R$ and $s\in(0,1)$. 
Then,  
\begin{align}
\label{eq:lemma:localization-fractional-norms-10}
\|\eta f\|_{H^{-s}(\Omega)} & \leq C_{\rm loc} \|\eta\|_{L^\infty(B_R)} \|f\|_{L^2(B_R)},  \\
\label{eq:lemma:localization-fractional-norms-20}
  \|\eta f\|_{H^{1-s}(\Omega)} &
      \begin{multlined}[t][.5\textwidth]
        \leq \Cloctwo \big[\left( R^{s} \|\nabla \eta\|_{L^\infty(B_R)}  + (R^{s-1}+1) \|\eta\|_{L^\infty(B_R)}\right) \|f\|_{L^2(\Omega)}  \\
        + \|\eta\|_{L^\infty(B_R)} |f|_{H^{1-s}(\Omega)} \big], 
      \end{multlined}
\end{align}
where the constants $C_{\rm loc}$, $\Cloctwo$  depend only on $\Omega$ and $s$. 
\end{lemma}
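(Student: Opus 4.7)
The plan is to handle the two bounds independently, tackling \eqref{eq:lemma:localization-fractional-norms-10} by duality and \eqref{eq:lemma:localization-fractional-norms-20} by a direct decomposition of the Aronstein-Slobodeckij seminorm. For \eqref{eq:lemma:localization-fractional-norms-10}, I would use $H^{-s}(\Omega) = (\widetilde{H}^s(\Omega))^{\ast}$: for any $v \in \widetilde{H}^s(\Omega)$, write
\begin{equation*}
  |\langle \eta f, v\rangle_{L^2(\Omega)}|
  = \Bigl| \int_{B_R} \eta\, f\, v\,dx \Bigr|
  \leq \|\eta\|_{L^\infty(B_R)} \|f\|_{L^2(B_R)} \|v\|_{L^2(\Omega)}
  \leq \|\eta\|_{L^\infty(B_R)} \|f\|_{L^2(B_R)} \|v\|_{\widetilde{H}^s(\Omega)},
\end{equation*}
where the last inequality is trivial from the definition of $\|\cdot\|_{\widetilde{H}^s(\Omega)}$. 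Taking the supremum over $v$ yields \eqref{eq:lemma:localization-fractional-norms-10}.

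For \eqref{eq:lemma:localization-fractional-norms-20}, I would decompose $\|\eta f\|_{H^{1-s}(\Omega)}^2 = \|\eta f\|_{L^2(\Omega)}^2 + |\eta f|_{H^{1-s}(\Omega)}^2$. The $L^2$--piece is immediate, giving the $\|\eta\|_{L^\infty}\|f\|_{L^2(\Omega)}$ contribution. For the seminorm, I would split
\begin{equation*}
  \eta(x) f(x) - \eta(z) f(z) = \eta(x)\bigl(f(x)-f(z)\bigr) + \bigl(\eta(x)-\eta(z)\bigr) f(z),
\end{equation*}
square, and apply $(a+b)^2 \leq 2a^2 + 2b^2$. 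The first resulting term is bounded by $2\|\eta\|_{L^\infty}^2 |f|_{H^{1-s}(\Omega)}^2$, which yields the $\|\eta\|_{L^\infty}|f|_{H^{1-s}(\Omega)}$ contribution. The second is
\begin{equation*}
  2\int_\Omega |f(z)|^2 \Bigl( \int_\Omega \frac{|\eta(x)-\eta(z)|^2}{|x-z|^{d+2(1-s)}}\,dx \Bigr) dz,
\end{equation*}
and the task reduces to bounding the inner integral uniformly in $z \in \Omega$.

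The crux is therefore the estimate of this inner integral. Since $\eta$ (extended by zero) is Lipschitz on $\R^d$ with constant $\|\nabla \eta\|_{L^\infty(B_R)}$ and uniformly bounded by $\|\eta\|_{L^\infty(B_R)}$, I would split the $x$--integral by $|x-z|\leq R$ and $R<|x-z|\leq \operatorname{diam}\Omega$. On the near regime, $|\eta(x)-\eta(z)| \leq \|\nabla \eta\|_{L^\infty}|x-z|$ yields, in polar coordinates, a contribution $\lesssim \|\nabla \eta\|_{L^\infty}^2 \int_0^R r^{2s-1}\,dr \lesssim R^{2s}\|\nabla \eta\|_{L^\infty}^2/s$. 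On the far regime, $|\eta(x)-\eta(z)|\leq 2\|\eta\|_{L^\infty}$ yields $\lesssim \|\eta\|_{L^\infty}^2 \int_R^{\operatorname{diam}\Omega} r^{2s-3}\,dr \lesssim R^{2s-2}\|\eta\|_{L^\infty}^2/(1-s)$. Taking square roots and combining with the $L^2$--part, the $\|\eta\|_{L^\infty}\|f\|_{L^2}$ term absorbs into $(R^{s-1}+1)\|\eta\|_{L^\infty}\|f\|_{L^2}$, giving \eqref{eq:lemma:localization-fractional-norms-20}. The only mild subtlety is that the splitting must be done uniformly in $z$ regardless of whether $z$ lies inside or outside $\supp\eta$; this is automatic because we use the global Lipschitz extension of $\eta$ rather than exploiting its support. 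No further finesse is required.
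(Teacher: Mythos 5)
Your proof is correct, and both parts deliver the stated bounds. For \eqref{eq:lemma:localization-fractional-norms-10}, your duality argument is just the explicit way of seeing the embedding $L^2(\Omega) \hookrightarrow H^{-s}(\Omega)$, which is exactly what the paper invokes, so these are the same. For \eqref{eq:lemma:localization-fractional-norms-20}, the overall architecture is shared (triangle-inequality split of the numerator into $\eta(x)(f(x)-f(z))$ and $(\eta(x)-\eta(z))f(z)$, with Lipschitz control at short range and $L^\infty$ control at long range), but you organize the cross term differently. You bound the inner integral $\int_\Omega |\eta(x)-\eta(z)|^2\,|x-z|^{-(d+2-2s)}\,dx$ uniformly in $z$ by splitting at $|x-z|=R$, which is the standard argument showing that a globally Lipschitz, compactly supported cutoff acts boundedly as a multiplier on $H^{1-s}$. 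The paper instead partitions the double integral over $\Omega\times\Omega$ into four regions determined by the support of $\eta$ ($B_{2R}\times B_{3R}$, $B_{2R}\times B_{3R}^c$, $B_{2R}^c\times B_R$, $B_{2R}^c\times B_R^c$), with the last region vanishing outright because both $\eta(x)$ and $\eta(z)$ are zero there. Your route is shorter and more systematic; the paper's four-way case split is a little heavier but records slightly more local information (e.g.\ producing $\|f\|_{L^2(B_{3R})}$ in one piece), which is irrelevant for the final bound since everything is estimated by $\|f\|_{L^2(\Omega)}$ anyway. Both approaches give the same constants up to harmless multiplicative factors.
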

\begin{proof}
\eqref{eq:lemma:localization-fractional-norms-10} follows directly from the embedding $L^2\subset H^{-s}$. For 
\eqref{eq:lemma:localization-fractional-norms-20}, we use the definition of the Slobodecki norm and the triangle inequality to write 
\begin{align*}
\abs{\eta f}_{H^{1-s}(\Omega)}^2 &= \int_{\Omega} \int_{\Omega} \frac{|\eta(x)f(x) - \eta(z)f(z)|^2}{\abs{x-z}^{d+2-2s}}\,dz\,dx \\
& \lesssim \int_{\Omega} \int_{\Omega} \frac{|\eta(x)f(x) - \eta(x)f(z)|^2}{\abs{x-z}^{d+2-2s}} \,dz\,dx 
+ \int_{\Omega} \int_{\Omega} \frac{|\eta(x)f(z) - \eta(z)f(z)|^2}{\abs{x-z}^{d+2-2s}}\,dz\,dx.
\end{align*}
The first term on the right-hand side can directly be estimated by $ \|\eta\|_{L^\infty(B_R)} |f|_{H^{1-s}(\Omega)}$.
 For the second term, we split the integration over $\Omega \times \Omega$ into four subsets, 
$B_{2R} \times B_{3R}$, 
$B_{2R}\times B_{3R}^c \cap\Omega$, 
$B_{2R}^c \cap\Omega \times B_R$, 
$B_{2R}^c \cap\Omega \times B_R^c \cap\Omega$; 
here, for simplicity we assume for the concentric balls $B_R \subset B_{2R} \subset B_{3R} \subset \Omega$, otherwise one has to intersect all balls with $\Omega$. 
For the last case, 
$B_{2R}^c \cap\Omega \times B_R^c \cap\Omega$, 
we have that $\eta(x)-\eta(z)$ vanishes and the integral is zero. For the case $B_{2R}\times B_{3R}^c$, we have $\abs{x-z} \geq R$ there. 
This gives 
\begin{align*}
&\int_{B_{2R}} \int_{B_{3R}^c\cap\Omega} \frac{|\eta(x)f(z) - \eta(z)f(z)|^2}{\abs{x-z}^{d+2-2s}}\,dz\,dx = \int_{B_{2R}} \int_{B_{3R}^c\cap\Omega} \frac{|\eta(x)f(z)|^2}{\abs{x-z}^{d+2-2s}}\,dz\,dx \\
&\qquad\quad\leq R^{-d-2+2s} \norm{\eta}_{L^\infty(B_R)}^2 \int_{B_{2R}} \int_{B_{3R}^c\cap\Omega} |f(z)|^2 dz dx \lesssim R^{-2+2s} \norm{\eta}_{L^\infty(B_R)}^2 \norm{f}_{L^2(\Omega)}^2.
\end{align*}
For the  integration over $B_{2R}^c\cap \Omega \times B_{R}$, we write using polar coordinates (centered at $z$)
\begin{align*}
&\int_{B_{2R}^c\cap \Omega} \int_{B_{R}} \frac{|\eta(z)f(z)|^2}{\abs{x-z}^{d+2-2s}}\,dz\,dx 
  = 
  \int_{B_{R}}|\eta(z)f(z)|^2 \int_{B_{2R}^c\cap\Omega} \frac{1}{\abs{x-z}^{d+2-2s}}\,dx\,dz \\
&\qquad\quad 
  \lesssim \int_{B_{R}}|\eta(z)f(z)|^2 \int_{R}^\infty \frac{1}{r^{3-2s}}\,dx\,dz 
  \lesssim R^{2s-2}\norm{\eta}_{L^\infty(B_R)}^2 \norm{f}_{L^2(\Omega)}^2.
\end{align*}
Finally, for the integration over $B_{2R} \times B_{3R}$, we use that $\abs{\eta(x)-\eta(z)} 
\leq \norm{\nabla \eta}_{L^\infty(B_R)} \abs{x-z}$ and polar coordinates (centered at $z$) to estimate
\begin{align*}
&\int_{B_{2R}} \int_{B_{3R}} \frac{|\eta(x)f(z) - \eta(z)f(z)|^2}{\abs{x-z}^{d+2-2s}}\,dz\,dx  
  \leq  \norm{\nabla \eta}_{L^\infty(B_R)}^2\int_{B_{3R}} |f(z)|^2 \int_{B_{2R}} \frac{1}{\abs{x-z}^{d-2s}}\,dx\,dz \\
&\qquad\quad \lesssim  \norm{\nabla \eta}_{L^\infty(B_R)}^2\int_{B_{3R}} |f(z)|^2 \int_{0}^{5R} r^{-1+2s}\,dr\,dz \lesssim  \norm{\nabla \eta}_{L^\infty(B_R)}^2  \norm{f}_{L^2(B_{3R})}^2 R^{2s}.
\end{align*}
The straightforward bound $\|\eta f\|_{L^2(\Omega)}\leq \|\eta\|_{L^\infty(B_R)} \| f\|_{L^2(\Omega)}$
concludes the proof.
\end{proof}

\section{Proof of Lemma~\ref{lemma:properties-of-H1alpha}}
\label{app:lemma:properties-of-H1alpha}
\begin{numberedproof}{of Lemma~\ref{lemma:properties-of-H1alpha}} 
The proof follows from the arguments given in \cite[Sec.~{3}]{KarMel19}; a more general development of Beppo-Levi spaces 
is given in \cite{deny-lions55}. 

\emph{Proof of (\ref{item:lemma:properties-of-H1alpha-i}):} 
Fix a (nondegenerate) hypercube $K  = \prod_{i=1}^{d+1} (a_i,b_i)$ with $a_{d+1} = 0$. Elements of the Beppo-Levi space 
$\operatorname{BL}^1_{\alpha}$ 
are locally in $L^2$, and one can equip the space ${\operatorname{BL}}^1_\alpha$ with the norm 
$\|U\|^2_{{\operatorname{BL}}^1_\alpha}:= \|U\|^2_{L^2_\alpha(K)} + \|\nabla U\|^2_{L^2_\alpha(\R^d \times \R_+)}$. 
Endowed with this norm, ${\operatorname{BL}}^1_\alpha$ is a Hilbert space and $C^\infty(\R^d \times [0,\infty)) \cap {\operatorname{BL}}^1_\alpha$ 
is dense, \cite[Lem.~{3.2}]{KarMel19}. On the subspace ${\operatorname{BL}}^1_{\alpha,0,\Omega}$ we show the norm equivalence 
$\|U \|_{{\operatorname{BL}}^1_\alpha} \sim \|\nabla U\|_{L^2_\alpha(\R^d\times \R_+)}$ using the bounded linear lifting operator 
${\mathcal E}: H^s(\R^d) \rightarrow H^1_\alpha(\R^d \times \Rpos)$ of \cite[Lem.~{3.9}]{KarMel19} 
and the norm equivalence of \cite[Cor.~{3.4}]{KarMel19}
\begin{align*}
\|\nabla U\|_{L^2_\alpha(\R^d \times \R_+)} & \leq 
\|U\|_{{\operatorname{BL}}^1_\alpha} \leq 
\|U - {\mathcal E} \operatorname{tr} U\|_{{\operatorname{BL}}^1_\alpha} + 
\|{\mathcal E} \operatorname{tr} U\|_{{\operatorname{BL}}^1_\alpha}  \\
 \overset{\text{\cite[Cor.~{3.4}]{KarMel19}}}&{\lesssim} \|\nabla( U - {\mathcal E}\operatorname{tr} U)\|_{L^2_\alpha(\R^d \times \R_+)} + 
\|{\mathcal E} \operatorname{tr} U\|_{{\operatorname{BL}}^1_\alpha}  \\
 \overset{\text{\cite[Lem.~{3.9}]{KarMel19}}}&{\lesssim} \|\nabla U \|_{L^2_\alpha(\R^d \times \R_+)} + \|\operatorname{tr} U\|_{H^s(\R^d)}  \\
 \overset{\operatorname{tr} U \in \widetilde{H}^s(\Omega), (\ref{eq:Htildet-vs-HtRd})}&{\lesssim} \|\nabla U \|_{L^2_\alpha(\R^d \times \R_+)} + |\operatorname{tr} U|_{H^s(\R^d)}  
 \stackrel{\text{\cite[Lem.~{3.8}]{KarMel19}}} {\lesssim} \|\nabla U \|_{L^2_\alpha(\R^d \times \R_+)}.  
\end{align*}
\emph{Proof of (\ref{item:lemma:properties-of-H1alpha-ii}):} 
From the fundamental theorem of calculus, 
we have for smooth univariate functions $v$ and $x \in (0,H)$ the estimate
$|v(x)| = |v(0) + \int_{t=0}^x v^\prime(t)\,dt| \lesssim |v(0)| + \sqrt{ \int_{t=0}^x t^\alpha |v^\prime(t)|^2\,dt }$.

Fix a closed hypercube $K' \subset \R^d$ of side length $d_{K'}>0$ with $K' \supset \Omega$.
 Define the translates $K_j:= d_{K'} j + K'$ for 
$j \in \Z^d$. 
For smooth $U$, we infer from the 1D estimate that 
\begin{align}
\label{eq:lemma:properties-of-H1alpha-10}
\|U\|_{L^2_\alpha(K' \times (0,H))} \leq C_{K'} \left( \|\nabla U\|_{L^2_\alpha(K' \times (0,H))} + \|\operatorname{tr} U\|_{L^2(K')} \right). 
\end{align}
By the density of 
$C^\infty(\R^d \times [0,\infty)) \cap {\operatorname{BL}}^1_\alpha$ in ${\operatorname{BL}}^1_\alpha$ 
from the proof of part (\ref{item:lemma:properties-of-H1alpha-i}),
the estimate 
(\ref{eq:lemma:properties-of-H1alpha-10}) holds for all $U \in {\operatorname{BL}}^1_\alpha$. 
By translation invariance of the norms and spaces, 
(\ref{eq:lemma:properties-of-H1alpha-10}) also holds 
for all $U \in {\operatorname{BL}}^1_\alpha $ and for all translates $K_j$, $j \in \Z^d$, with the same constant $C_{K'}$. 
For $U \in {\operatorname{BL}}^1_{\alpha,0,\Omega}$, we observe 
$\|\operatorname{tr} U\|_{L^2(K_0)} \leq \|\operatorname{tr} U\|_{\widetilde{H}^s(K_0)} 
\leq C_\Omega |\operatorname{tr} U|_{H^s(\R^d)}$ (cf.\ (\ref{eq:Htildet-vs-HtRd})) 
and $\operatorname{tr} U|_{K_j} = 0$ for $j \ne 0$. 
Hence, using the Kronecker $\delta_{j,0}$, we arrive at  
\begin{align*}
\label{eq:lemma:properties-of-H1alpha-20}
\|U\|_{L^2_\alpha(K_j \times (0,H))} \leq C_{K'} \left( \|\nabla U\|_{L^2_\alpha(K_j \times (0,H))} + C_\Omega \delta_{j,0} |\operatorname{tr} U|_{H^s(\R^d)} \right). 
\end{align*}
Since  $\R^d  = \cup_{j \in \Z^d} K_j$ and the intersection $K_j \cap K_{j'}$ is a set of measure zero for $j \ne j'$,  summation over all $j$ implies 
\begin{align*}
\|U\|_{L^2_\alpha(\R^d \times (0,H))} \lesssim \|\nabla U\|_{L^2_\alpha(\R^d \times (0,H))} + |\operatorname{tr} U|_{H^s(\R^d)}. 
\end{align*}
The proof is completed by noting $|\operatorname{tr} U|_{H^s(\R^d)} \lesssim \|\nabla U\|_{L^2_\alpha(\R^d \times \R_+)}$ by 
\cite[Lem.~{3.8}]{KarMel19}. 
\end{numberedproof}

\bibliography{bibliography}
\bibliographystyle{alpha}
\end{document}